\providecommand{\algorithmname}{Algorithm}
\newcommand{\R}{\mathbb{R}}
\newtheorem{thm}{Theorem}[section]
\newtheorem{lem}[thm]{Lemma}
\newtheorem{remark}[thm]{Remark}
\newtheorem{prop}[thm]{Proposition}
\newtheorem{conjecture}[thm]{Conjecture}
\theoremstyle{definition}
\newtheorem*{claim*}{Claim}
\newtheorem{rem}[thm]{Remark}
\numberwithin{equation}{section}
\newtheorem*{acknowledgement}{Acknowledgement}
\begin{document}

\title[]{
On a conjecture of a P\'olya functional for triangles and rectangles}

\author[Banuelos]{Rodrigo Ba\~{n}uelos{$^{\dag}$}}
\thanks{\footnotemark {$\dag$} Research was supported in part by NSF Grant DMS-1854709}
\address{ Department of Mathematics\\
Purdue University\\
West Lafayette, IN 47907,  U.S.A.} \email{banuelos@purdue.edu}

\author[Mariano]{Phanuel Mariano{$^\ddag$}}
\thanks{\footnotemark {$\ddag$} Research was supported in part by  NSF Grant DMS-2316968 and a 2023-2024 Union College Faculty Research Fund Award}
	\address{Department of Mathematics\\
		Union College\\
		Schenectady, NY 12308,  U.S.A.}
	\email{marianop@union.edu}

\keywords{torsion function, torsional rigidity, Dirichlet Laplacian, Dirichlet eigenvalue, sharp inequality}
\subjclass{Primary 35P15 ,	49R05  ; Secondary  49J40, 35J25  }


\begin{abstract}  
We consider the  functional given by the product of the first Dirichlet eigenvalue  and the torsional rigidity of planar domains normalized by
the area. This scale invariant functional was studied by P\'olya and Szeg\H{o} in 1951 who showed that it is bounded above by 1 for all domains.  It has been conjectured that within the class of bounded convex planar domains 
the functional is bounded below by $\pi^{2}/24$ and above by $\pi^{2}/12$ and that these bounds are sharp. Remarkably, the conjecture remains open even within the class of triangles.  The purpose of this paper is to prove the conjecture in this case.
The conjecture is also proved for rectangles where a  stronger monotonicity property is verified.  Finally, the upper bound also holds for tangential quadrilateral.

\end{abstract}

\maketitle

\tableofcontents

\section{Introduction and Main Results}

Consider  an open connected set $D\subset \R^d$, $d\geq 2$, which we refer to as a domain. Further,  assume the Lebesgue measure of $D$, denoted by $|D|$, is finite.  
The torsion function $u_{D}$ is
the unique weak solution to the  boundary value problem 
\begin{align}\label{TorionFunc}
\begin{cases}
\Delta u_{D}=-1 & \text{in } D,\\
u_D\in H_0^1\left(D\right). & 
\end{cases}
\end{align}
It is well known that  $\left\Vert u_{D}\right\Vert _{\infty}<\infty$, $u_D\geq 0$ and $u_D\in C^\infty\left(D\right)$. In fact, $u_D(x)$ satisfies the isoperimetric inequality $u_D(x)\leq u_{D^*}(0)$, where $D^*$, is the ball centered at the origin with $|D^*|=|D|$.  It is also a well-known (and widely-used) fact that 
$u_D(x)=\frac{1}{2}\mathbb{E}_{x}\left[\tau_{D}\right]$, where the right hand side is the expectation of the first exit time $\tau_{D}$ of Brownian motion from the domain $D$ starting at the point $x\in D$.  Although this probabilisitic interpretation is very useful in many ways, it will not be explicitly used in this paper other than from time to time to observe domain monotonicity of various quantities.

The {\bf torsional rigidity} $T(D)$ of $D$ is defined by 
\[
T(D)=\int_{D}u_{D}(x)dx.
\]The torsional rigidity $T(D)$ has been studied and applied extensively in the theory of elasticity \cite{Timoshenko-Goodier}. The torsional rigidity $T(D)$ is related to the computation that measures the resistance of a beam with cross-sections   $D$ to twisting forces. Probabilistically, the quantity $T\left(D\right)/\left|D\right|$ can be written as $\frac{1}{2}\mathbb{E}_{\mu}\left[\tau_{D}\right]$
which is the mean exit time of Brownian motion started in $D$ whose
starting point is averaged by the uniform distribution $\mu$ on $D$.

Let  $\lambda_{1}\left(D\right)$ be the \textbf{first Dirichlet eigenvalue} of  $-\Delta_{D}$. In \cite{Polya-1948b}, P\'olya showed that the process of Steiner symmetrization decreases $\lambda_{1}\left(D\right)$ while increasing  $T\left(D\right)$. In this paper, we study the relation between $\lambda_{1}\left(D\right)$ and $T\left(D\right)$ through the following functional
\begin{equation}\label{defn:F-functional}
F(D)=\frac{\lambda_1(D)T(D)}{\left|D\right|},
\end{equation}
which we refer to by the P\'olya functional following \cite{Vandenberg-Ferone-Nitsch-Trombetti-2019a}. This functional was studied by  P\'olya-Szeg\H{o} \cite[p. 91]{Polya-Szego-1951} in 1951 who showed that $F(D)\leq1$. This was known to P\'olya as early as 1947 in \cite[Eq. (2)]{Polya-1947a}. By a result in \cite[Theorem 1.2]{Vandenberg-Ferrone-Nitsch-Trombetti-2016}, this bound is sharp over all open connected sets in $\mathbb{R}^d$. The problem of obtaining sharp upper and lower bounds on this functional and its extremals for subclasses of domains has been extensively investigated for many years, and especially in the last ten years or so. Among the class of bounded convex domains in the plane the following conjecture is open.

\begin{conjecture}[Conjecture 4.2 in \cite{Berg-Buttazzo-Pratelli-2021}, see also \cite{Berg-Buttazzo-Velichkov-2015,Vandenberg-Ferrone-Nitsch-Trombetti-2016}]\label{Conjecture-convex}
For all bounded convex planar domains $D\subset\mathbb{R}^2$,
\begin{equation}\label{Conjecture-convex-ineq}
\frac{\pi^{2}}{24} <F(D)<\frac{\pi^{2}}{12}, 
\end{equation}
and these bounds are sharp. The lower bound is attained for a collapsing
sequence of isosceles triangles converging down to an interval. The upper bound is
attained by a sequence of elongating rectangles approaching the infinite strip.  
\end{conjecture}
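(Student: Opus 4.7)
The strategy I would adopt is to build on the triangle, rectangle, and tangential-quadrilateral cases established elsewhere in this paper and attempt a reduction of the general convex case to them via symmetrization and polygonal approximation. As a first step, fix $|D|=1$; by Hausdorff continuity of both $\lambda_1$ and $T$ on the class of convex planar domains together with the Blaschke selection theorem, the extremal values of $F$ over bounded convex planar domains are either attained or are approached by a degenerating sequence. The plan is then to identify which type of degeneration carries the extremum and to rule out every other candidate.

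For the upper bound $F(D)<\pi^2/12$ I would exploit the classical fact that Steiner symmetrization with respect to any line strictly increases $T$ and strictly decreases $\lambda_1$, and hence strictly increases $F$, unless the domain is already symmetric about that line. Iterating Steiner symmetrizations in two orthogonal directions on a general convex $D$ produces a doubly symmetric convex body with strictly larger $F$, so it suffices to prove the bound for doubly symmetric convex domains. Within this smaller class I would continuously deform, via a continuous Steiner-type flow, toward a rectangle of equal area, verify that $F$ stays below $\pi^2/12$ along the path using a Hadamard shape-derivative computation, and close the argument with the rectangle estimate proved in this paper. The tangential-quadrilateral case would serve as an intermediate target when the flow passes through polygonal profiles.

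For the lower bound $F(D)>\pi^2/24$ the reverse direction is needed, and this is where the principal difficulty lies. Steiner symmetrization pushes $F$ \emph{upward}, away from the conjectured infimum, so the previous argument cannot be run in reverse; a qualitatively different operation is required that drives $F$ down toward the collapsing-isosceles-triangle limit. My plan is to argue by induction on the number of vertices in a polygonal approximation of $D$: the triangle case proved here is the base, and at each step I would replace a convex polygon $P_n$ with $n\geq 4$ vertices either by a polygon $P_{n-1}$ obtained by an edge-collapse at the vertex where the appropriate shape derivative of $F$ has the favorable sign, or by a diagonal triangulation of which the worst piece still satisfies the bound. Passing to general convex $D$ from the polygonal case is then a routine continuity argument.

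The main obstacle, and the reason the conjecture has remained open, is controlling the sign of the shape derivative of $F$ along the edge-collapse flow: the Hadamard formulas for $\lambda_1$ and $T$ combine into boundary integrals of $|\nabla u_D|^2$ and $|\nabla\varphi_1|^2$ that are not sign-definite on a general convex polygon. A realistic refinement of the plan is therefore to localize the analysis in a neighborhood of the vertex being collapsed, treat the polygon there as a perturbation of a triangle or of a tangential quadrilateral so that the estimates from this paper apply, and only afterward globalize. The strictness of the inequalities in the conjecture would then follow automatically from the strictness of the Steiner symmetrization step and the strict inequalities in the triangle and rectangle cases, with equality only in the stated degenerate limits.
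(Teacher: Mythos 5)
The statement you are addressing is a conjecture, and the paper does not prove it for general bounded convex planar domains: it establishes the bounds only for triangles and rectangles (plus the upper bound for tangential quadrilaterals), and explicitly states that the general convex case remains open. Your proposal therefore has to stand on its own, and it contains a fatal error at its very first step. You assert that since Steiner symmetrization increases $T$ and decreases $\lambda_1$ (both true), it ``hence strictly increases $F$.'' This is a non sequitur: $F(D)=\lambda_1(D)T(D)/|D|$ is a \emph{product} of a quantity that goes up and one that goes down, with $|D|$ preserved, so symmetrization can move $F$ in either direction. This is precisely the obstruction the paper identifies as the reason the problem is hard (``At present there are no general techniques \dots that give the increasing, or decreasing, of the product as a single unit''), and it is why the authors resort to ad-hoc, region-by-region estimates on each factor separately rather than any monotone rearrangement. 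With this step gone, your reduction of the upper bound to doubly symmetric domains, and then to rectangles, collapses.

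The remaining steps are restatements of the difficulty rather than arguments. The ``continuous Steiner-type flow toward a rectangle'' with a Hadamard shape-derivative computation is not carried out, and you yourself concede that the boundary integrals of $|\nabla u_D|^2$ and $|\nabla\varphi_1|^2$ in those derivatives are not sign-definite; the same applies to the edge-collapse induction for the lower bound, whose base case (triangles) is the hard content of this paper and whose inductive step is exactly the open problem. For the cases it can handle, the paper instead splits the parameter space of triangles into explicit regions and on each one combines an explicit lower bound for $T$ (a variational test function or an inscribed circular sector) with an explicit lower bound for $\lambda_1$ (the Freitas--Siudeja diameter/height bound, or a sector comparison via Bessel zeros), reducing everything to checkable polynomial inequalities; for rectangles it proves monotonicity of $F(R_{a,1})$ from the explicit double series. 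None of that machinery extends, as written, to a general convex domain, and your proposal does not supply the missing ingredient.
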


Remarkably, the conjecture remains open even within the class of triangles.  The purpose of this paper is to prove the conjecture in this case. We will also consider a \textbf{tangential quadrilateral}, which is any convex quadrilateral that contains an incircle that is tangent to all sides. Examples include kites which include rhombi.

\begin{thm}
\label{thm:MainResult} 
Suppose  $D\subset\mathbb{R}^{2}$ is a triangle or a rectangle. Then 
\begin{equation}
\frac{\pi^{2}}{24}< F(D)< \frac{\pi^{2}}{12}.\label{eq:Main-Theorem-Inequality}
\end{equation}
The upper bound is attained for a sequence of elongating rectangles
approaching an infinite strip. The lower bound is attained for any sequence
of triangles collapsing down to an interval. 

The upper bound  also holds for any tangential quadrilateral.   
\end{thm}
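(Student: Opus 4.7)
My plan treats the three shape families separately.

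\textbf{Rectangles.} For $R=[0,a]\times[0,b]$ both quantities are classical: $\lambda_1(R) = \pi^2(a^{-2}+b^{-2})$ and $T(R)$ is given by a convergent double sine Fourier series in the odd-indexed modes. By scale invariance, reduce to the aspect ratio $t=b/a\geq 1$; then $F=F(t)$ is an explicit one-variable function. The plan is to verify (i) $\lim_{t\to\infty}F(t)=\pi^2/12$ by isolating the dominant mode, (ii) $F(1)=F(\text{square})>\pi^2/24$ via a finite partial sum combined with a rigorous tail estimate, and (iii) $F'(t)>0$ on $[1,\infty)$ by term-by-term differentiation of the Fourier series. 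These three facts yield the strict bounds $\pi^2/24 < F(R) < \pi^2/12$ for every rectangle, and the upper limit is approached but never attained.

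\textbf{Triangles --- the main obstacle.} Parametrize similarity classes of triangles by the apex $(x_0,y_0)\in\R\times\R_{>0}$ over a fixed base $[0,1]$. I would proceed in three steps. First, the equilateral triangle admits the explicit torsion function $u = -\ell_1\ell_2\ell_3/(12a)$ (product of the three affine side-functions) and $\lambda_1 = 16\pi^2/(3s^2)$, which gives $F(\text{equilateral}) = \pi^2/15 \in (\pi^2/24,\pi^2/12)$. Second, for the isosceles family ($x_0=1/2$) parametrized by $\alpha=y_0$, the thin-domain asymptotic
\[
F \sim \frac{\pi^2 \int w^3}{12\, w_{\max}^2 \int w}
\]
with triangular width-profile $w$ yields $F\to\pi^2/24$ in both degenerate limits $\alpha\to 0$ and $\alpha\to\infty$; a quantitative version of this asymptotic secures the strict lower bound for every finite $\alpha$, while the strict upper bound $F<\pi^2/12$ follows because isosceles triangles never approach the strip limit. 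Third, reduce the scalene case to the isosceles one. Since Steiner symmetrization perpendicular to the base decreases $\lambda_1$ and increases $T$, it does not directly monotonize $F$; instead I would use Hadamard-type shape derivatives to compute $\partial F/\partial x_0$ at fixed $y_0$ and show that it has definite sign off the symmetric position $x_0=1/2$, so that extrema among scalene triangles reduce to the isosceles analysis. This last step is the main obstacle: absent closed-form expressions for $\lambda_1$ on a general triangle, the sign of $\partial F/\partial x_0$ requires a delicate joint estimate of the boundary integrals of $|\nabla\phi_1|^2$ and $|\nabla u|^2$ along the moving side.

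\textbf{Tangential quadrilaterals.} For the upper bound, exploit the geometric identity $|Q|=\rho s$ (inradius $\rho$, semi-perimeter $s$) together with strip inclusion from the tangent lines to the incircle, which gives $\lambda_1(Q)\geq \pi^2/(2\rho)^2$. The torsional rigidity is controlled via the rigidity of the tangential structure; a promising route is to decompose $Q$ into four triangles meeting at the incenter and apply the triangle result of the previous step to each piece, combined with the domain-monotonicity $u_Q\geq u_{T_i}$ on $T_i$ and the variational characterization of $\lambda_1$. A careful combination, exploiting that the four tangent points divide the boundary into arcs of controlled length, is expected to yield $F(Q)<\pi^2/12$.
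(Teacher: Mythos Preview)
Your rectangle argument is essentially the paper's: the explicit double Fourier series for $T$, monotonicity of $F(R_{t,1})$ in $t\geq 1$ by term-by-term analysis, and the limit $\pi^2/12$ by dominated convergence. No issues there.

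The triangle part has a genuine gap. Your plan hinges on showing that $\partial F/\partial x_0$ has a definite sign off $x_0=1/2$, so that scalene extrema reduce to isosceles; you yourself flag this as the main obstacle and give no argument. There is in fact no reason to expect a clean sign here, precisely because Steiner symmetrization moves $\lambda_1$ and $T$ in opposite directions. The paper does \emph{not} attempt any such reduction. For the upper bound it uses two known perimeter inequalities --- Makai's $T(D)P(D)^2/|D|^3<2/3$ and Siudeja's $\lambda_1(\triangle)|\triangle|^2/P(\triangle)^2\le \pi^2/9$ --- which multiply directly to $F(\triangle)\le 2\pi^2/27<\pi^2/12$. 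For the lower bound it does a lengthy case analysis over the triangle moduli space: it parametrizes acute/right and obtuse triangles separately, and on each of several subregions pairs a lower bound on $\lambda_1$ (from Freitas--Siudeja diameter/height bounds or from sector comparison $\lambda_1(\triangle)\ge \lambda_1(S(\gamma,\rho))=\gamma j_{\pi/\gamma}^2/b$) with a lower bound on $T$ (from an explicit cubic test function or from sector inclusion $T(\triangle)\ge T(S(\gamma,h))$). The thin cases are handled by a monotonicity lemma that reduces the resulting one-variable inequality to the right-triangle boundary, and the residual polynomial inequalities are dispatched by an interval-splitting algorithm. Your outline contains none of these ingredients, and the asymptotic ``quantitative version'' you invoke for isosceles triangles would still leave the entire scalene interior untreated.

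Your tangential-quadrilateral argument is oriented the wrong way. You propose the Hersch--Protter-type bound $\lambda_1(Q)\ge \pi^2/(2\rho)^2$ and the domain monotonicity $u_Q\ge u_{T_i}$; both are \emph{lower} bounds on the factors of $F(Q)=\lambda_1 T/|Q|$, so no combination of them can produce the desired \emph{upper} bound $F(Q)<\pi^2/12$. The paper instead multiplies Makai's inequality by the Solynin--Zalgaller inequality $\lambda_1(Q)|Q|^2/P(Q)^2\le \pi^2/8$ (valid for tangential quadrilaterals), which gives $F(Q)<(2/3)(\pi^2/8)=\pi^2/12$ in one line.
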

\begin{remark}\label{strongerrectangles}
A stronger monotonicity result is given for rectangles, where in Theorem \ref{thm:Main-Rec} it is shown that $F\left(R_{a,1}\right)$ is increasing for
$a\geq1$ for rectangles $R_{a,b}=\left(-a,a\right)\times\left(-b,b\right)$. 
\end{remark}

As already mentioned, while Conjecture \ref{Conjecture-convex} remains open for general convex domains, progress has been made for other smaller classes of domains. In \cite{Vandenberg-Ferone-Nitsch-Trombetti-2019a}, the authors proved that the lower bound of Conjecture \ref{Conjecture-convex} is true 
for all domains $D$ that are either isosceles triangles
or rhombi. Moreover, they show this inequality is sharp
for a limiting sequence of collapsing isosceles triangles or rhombi that converge to an interval. It has also been shown in \cite[Proposition 5.2]{Berg-Buttazzo-Pratelli-2021} or \cite[Theorem 4.4]{Briani-Buttazzo-Prinari-2022} that the asymptotic limit of $F$ for thinning sequences of convex domains are always between the conjectured bounds.

 In this paper we shall only be concerned with domains in the class $\mathcal{C}_2$ of planar bounded convex domains. Some improvements on the upper bound $F(D)\leq 1$ valid for all planar domains have been obtained for the class $\mathcal{C}_2$. For example, the bound $F(D)\leq1-\frac{1}{11560}\approx0.999913$  was given in  \cite{Vandenberg-Ferrone-Nitsch-Trombetti-2016}. The best bound to date for all $D\in \mathcal{C}_2$ is $F(D)\leq 0.996613$ given in \cite{Ftouhi-2022}. Recently in \cite{Berg-Bucur-2024}, it has been shown that there exists a $c<1$ such that $F(D)<c$ for all simply connected planar domains. Improved lower bounds for $F\left(D\right)$ for all $D\in \mathcal{C}_2$ have also been obtained. In particular, it was shown in \cite{Vandenberg-Ferrone-Nitsch-Trombetti-2016} that $F\left(D\right)\geq \frac{\pi^2}{48}$ on $\mathcal{C}_2$. This has been improved to $F\left(D\right) \geq \frac{\pi^2}{32}$ for planar convex domains in $\mathcal{C}_2$ (see \cite[Prop. 3.2]{Briani-Buttazzo-Prinari-2022} and \cite[Remark 4.1]{Brasco-Mazzoleni-2020}).

In general, other than a few special cases,  there are no explicit formulas for the torsional rigidity or the first Dirichlet eigenvalue of triangles or general polygons. This makes proving sharp inequalities involving both $\lambda_1(D)$ and $T(D)$  difficult even for triangles. Despite this, 
sharp inequalities for the first Dirichlet eigenvalue of triangles, quadrilaterals and other polygons have been extensively investigated in the literature.  We point to the works of \cite{Freitas-2007,Freitas-2006,Antunes-Freitas-2006,Freitas-Laugesen-2021,Antunes-Freitas-2011,Endo-Liu-2023,Solynin-Zalgaller-2010,Arbon-etall-2022,Arbon-2022,Laugesen-etall-2017,Siudeja-2016,Indrei-2024} for some of this literature.
In particular, the inequalities and methods from  \cite{Freitas-Siudeja-2010,Siudeja-2007,Laugesen-Siudeja-2011,Siudeja-IU-2010} will be useful in the proof of our main result for various cases. Although not as extensive, there is also a sizable literature  
regarding the torsional rigidity of polygons; 
see \cite{Rolling-2023,Solynin-2020,Fleeman-Simanek-2019,Solynin-Zalgaller-2010,Benson-Laugesen-Minion-Siudeja-2016,Timoshenko-Goodier}. Although not directly related, it is interesting to note that other difficult spectral theory problems have also been studied for triangles. One such example is the well known Hot Spots Conjecture regarding the maximum of the Neumann eigenfunction corresponding to the first positive eigenvalue that was settled recently in \cite{Judge-Sugata-2019} for triangles, with earlier and recent contributions given by several authors  \cite{Siudeja-HotSpots-2015,Chen-Changfeng-Ruofei-2023,Banuelos-Burdzy-1999}; see also the Polymath Project 7 \cite{Polymath}. That conjecture remains open for general convex domains. Inverse spectral problems have also been considered for triangles such as in  \cite{Gomez-Serrano-Orriols-2021,Meyerson-McDonald-2017} and higher $L^1$-moment spectrum bounds have been studied in \cite{Dryden-Langford-McDonald-2017}.

We now discuss other functionals where their sharp bounds would imply sharp bounds for $F$. Consider the functional given by 
\[
\Psi\left(D\right)=\frac{T\left(D\right)}{\left|D\right|R_{D}^{2}},
\]
where $R_D$ is the inradius. By the results in \cite{Brasco-Mazzoleni-2020,Polya-Szego-1951} it follows that $\Psi\left(D\right)\geq\frac{1}{8}$
for $\mathcal{C}_{2}$. Combining this with the Hersch-Protter inequality
$\lambda_{1}\left(D\right)R_{D}^{2}\geq\frac{\pi^{2}}{4}$ gives the
best known bound of $F\left(D\right)\geq\frac{\pi^{2}}{32}$, as mentioned in \cite[Remark 4.1]{Brasco-Mazzoleni-2020}.

There is another functional whose lower bounds imply lower bounds for $F$. The mean-to-max ratio of the torsion function (also referred to as the ``efficiency")
is defined by 
\[
\Phi\left(D\right)=\frac{T\left(D\right)}{\left|D\right|M\left(D\right)},
\]
where $M\left(D\right)$ is the maximum of the torsion function $u_{D}$. Various authors have proved upper and lower bounds for $\Phi$ over
convex domains for more general operators; see \cite{DellaPietra-Gavitone-Guarino-2018,Bueno-Ercole-2011,Henrot-Lucardesi-Philippin-2018,Briani-Bucur-2023}. The best lower bound so far is $\Phi\left(D\right)\geq\frac{1}{4}$
given in \cite{DellaPietra-Gavitone-Guarino-2018}. A result by Payne in \cite{Payne-1981} shows that $\lambda_{1}\left(D\right)M\left(D\right)\geq\frac{\pi^{2}}{8}$.
Combining these two bounds 
implies that $F\left(D\right)\geq\frac{\pi^{2}}{32}$. It is conjectured in \cite{Henrot-Lucardesi-Philippin-2018} 
that the bound $\Phi\left(D\right)\geq\frac{1}{3}$ holds  for convex planar domains which
would also imply the conjecture on $F\left(D\right)$.

\subsection{Discussion of method of proof}
As mentioned before, there are no explicit formulas for the first Dirichlet eigenvalue nor for the torsional rigidity of arbitrary triangles. Moreover, what makes the study of extremal domains difficult for the functionals mentioned above, including the P\'olya functional $F$ studied in this paper,  is the competing  symmetries in the problem. While the classical symmetrizations techniques, such as spherical or Steiner  symmetrization, increase the torsional rigidity, they decrease the eigenvalue.  At present there are no general techniques (symmetrization or other types) that give the increasing,  or decreasing, of the product as a single unit. Thus, the results are obtained by developing ad-hoc techniques.  For example, for the lower bound of the product one finds good lower bounds for each quantity involved and similarly for the upper bound of the products. This requires dividing domains into various geometric cases and applying different techniques to different cases.

In the case for triangles our approach is to split the proof into several acute and obtuse cases to prove the lower bound. We rely on various different techniques to obtain the required bounds depending on the cases. We use domain monotonicity to compare with other domains where explicit formulas are known. We also use various inequalities for the first Dirichlet eigenvalue proved  in \cite{Freitas-Siudeja-2010,Siudeja-2007,Laugesen-Siudeja-2011,Siudeja-IU-2010}. We  use the variational characterization of the torsional rigidity (see \eqref{eq:TorRig-variational}) to prove new lower bounds  for the torsional rigidity. In some of the cases, we rely on  Steiner symmetrization to give a bound for the eigenvalue in terms of other triangles. The most difficult cases concerns those of the thin triangles where $F$ approaches the sharp lower bound $\pi^2/24$. In these cases, given in Proposition \ref{prop:Lower-Acute-b-high} and \ref{prop-Obtuse-Lower-Case3}, the new idea is to use a monotonicity result (Lemma \ref{lem:Acute-High-1} and \ref{lem:Techinical-Lower-Mgeq3}) to reduce to a lower bound  for right triangles.

All of the derived bounds are done analytically. Some of the inequalities are shown by proving various technical lemmas on explicit functions. Some of the lemmas are reduced to proving polynomial inequalities, of which we adopt a method of Siudeja given in \cite[Section 5]{Siudeja-IU-2010}.  The upper bound will rely on previous known bounds by Siudeja, Makai and Solynin-Zalgaller. The monotonicity result for the P\'olya functional for rectangles follows from an explicit infinite series expression obtained from the classical  expansion of the Dirichlet heat kernel for rectangles in terms of the eigenvalues and eigenfunctions.

\subsection{Organization of the paper}

The paper is organized as follows.  Section \ref{proof-set-up} gives the geometric description for arbitrary triangles $\triangle_{a,b}$  in terms of the pair of parameters $(a, b)$.  In terms of these parameters, one can describe the cases of (1) \textbf{obtuse}, (2) \textbf{acute}, (3) \textbf{isosceles} and (4) \textbf{right},   triangles. See Figure \ref{figure1}. This section also recalls the exact formulas for the torsion function, torsional rigidity, eigenfunction, eigenvalue and P\'olya functional for the equilateral triangle, one of the few triangles where all the quantities are known.  Section \ref{prelimLB} gives several Lemmas proving lower bounds on quantities that will be used in the various cases for the lower bound estimate in Theorem \ref{thm:MainResult}. Section \ref{sec:Lower-Triangle} proves the  lower bound for Theorem \ref{thm:MainResult} for acute and right triangles.  Section \ref{sec:Lower-Triangle-Obtuse} proves the bound for obtuse triangles. The announced upper bounds are proved in Section \ref{upperbounds}, both for triangles and  tangential quadrilaterals. Section \ref{upperbounds} also contains Proposition \ref{Prop:upper-thinning} which shows the sharpness of the lower bound of Theorem \ref{thm:MainResult} for any sequence of thinning triangles.  Section \ref{CollectAll} collects all the bounds to conclude the proof of Theorem \ref{thm:MainResult} for triangles.  Section \ref{rectanglecase}  proves Theorem  \ref{thm:MainResult} in the case of rectangles and shows the monotonicity as stated in Remark \ref{strongerrectangles}.

\section{\label{sec:Triangle-Set-up}Preliminaries for triangles}

\subsection{Proof set up}\label{proof-set-up}
Consider a triangle $\triangle_{a,b}$ with vertices on $\left(0,0\right)$, $\left(1,0\right)$, $\left(a,b\right)$
with sides of length $1,M=\sqrt{a^{2}+b^{2}}$ and $N=\sqrt{\left(a-1\right)^{2}+b^{2}}$.
By translation, rotation, and scaling invariance of $F\left(D\right)$,
it is enough to consider triangles of the form $\bigtriangleup_{a,b}$
whose admissible set of points $\left(a,b\right)$ come from
\[
\mathcal{T}=\left\{ \left(a,b\right)\in\left[0,\frac{1}{2}\right]\times\left[0,\frac{\sqrt{3}}{2}\right]\mid\left(a-1\right)^{2}+b^{2}\leq1,0\leq a\leq\frac{1}{2}\right\} .
\]
Note that $M\leq N\leq 1$. Let $\gamma$ be the angle between the sides
of length $M,N$, so that by the law of cosines we have $\cos\gamma=\frac{M^{2}+N^{2}-1}{2NM}$. Using this we can observe the following. 
The\textbf{obtuse triangles} correspond to the case when $\frac{\pi}{2}<\gamma<\pi$
which occurs exactly when $\left(a-\frac{1}{2}\right)^{2}+b^{2}<\left(\frac{1}{2}\right)^{2}$
,$0\leq a,b\leq\frac{1}{2}$. The \textbf{right triangles} correspond
to the curve $\left(a-\frac{1}{2}\right)^{2}+b^{2}=\left(\frac{1}{2}\right)^{2}$, $0\leq a,b\leq\frac{1}{2}$.
The \textbf{acute triangles} correspond to $\left(a-1\right)^{2}+b^{2}<1$, $0\leq a\leq\frac{1}{2}$, $0\leq b\leq\frac{\sqrt{3}}{2}$ outside
of the obtuse region. Finally, the \textbf{isoscele triangles} correspond
to those on the part of the circle $\left(a-1\right)^{2}+b^{2}=1$, $0\leq a\leq\frac{1}{2}$, $0\leq b\leq\frac{\sqrt{3}}{2}$ and
also the vertical line $a=\frac{1}{2}$ for $0\leq b\leq\frac{\sqrt{3}}{2}$. See Figure \ref{figure1}.  We will mainly use this characterization when dealing with triangles
that are \textbf{obtuse} so that $\left(a,b\right)\in\mathcal{T}_{\text{obtuse}}$
where 
\[
\mathcal{T}_{\text{obtuse}}=\left\{ \left(a,b\right)\in\left[0,\frac{1}{2}\right]\times\left[0,\frac{1}{2}\right]\mid\left(a-\frac{1}{2}\right)^{2}+b^{2}\leq\frac{1}{4}\right\} .
\]

\begin{figure}[h]
    \centering
    \includegraphics[width=0.6\textwidth]{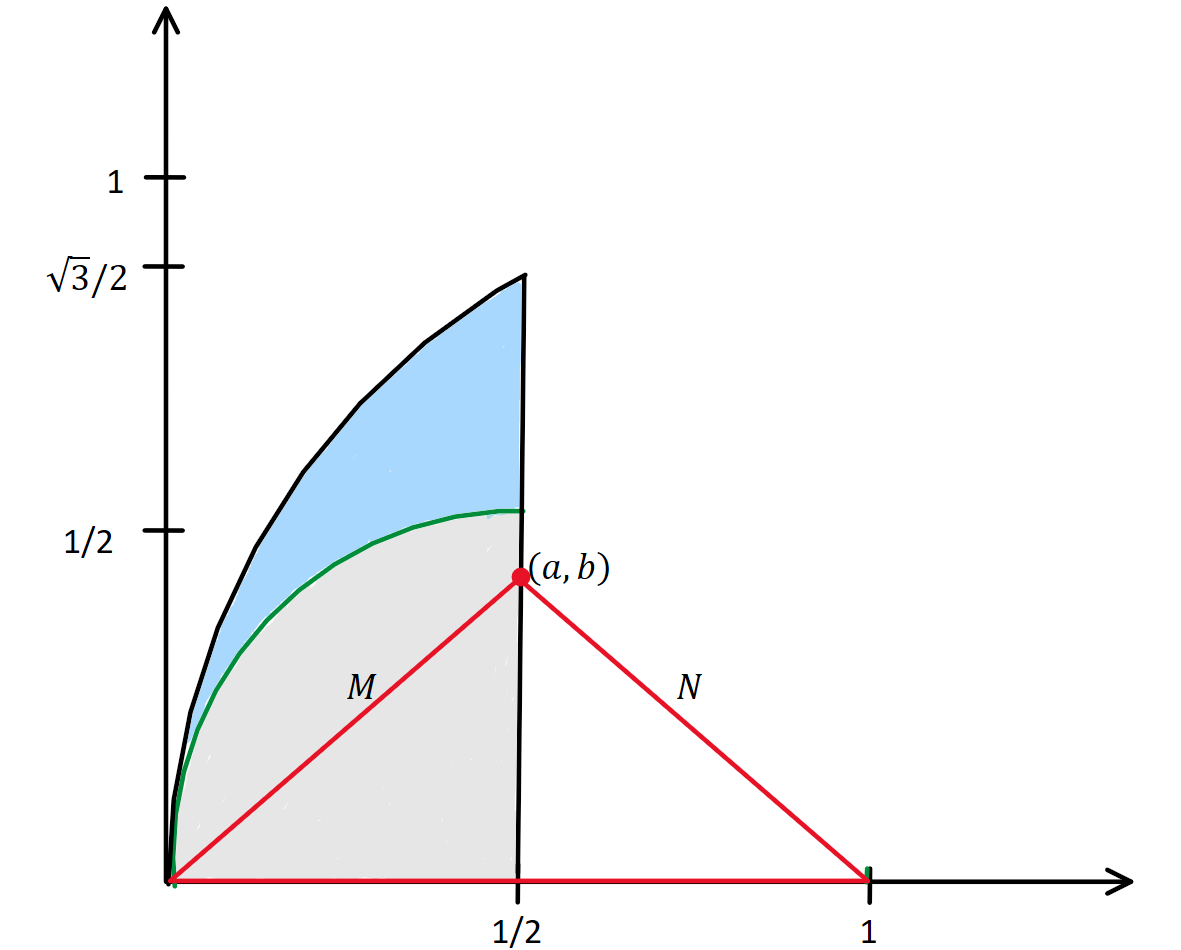}
    \caption{The blue shaded region corresponds to acute triangles.  The grey shaded region corresponds to obtuse triangles. The green inner circular arc corresponds to the right triangles. The black outer circular arc combined with the black vertical line at $a=\frac{1}{2}$ corresponds to the isosceles triangles. An example of an isosceles triangle is given in red. }
 \label{figure1}
\end{figure}


For \textbf{acute and right triangles}, we will use a different
characterization of triangles. In particular, we can write any acute
and right triangle as $\triangle^{M,N}$ with side lengths $1\leq M\leq N$
such that $N\leq\sqrt{M^{2}+1}$. We will still associate this triangle
with the one whose vertices are at $\left(0,0\right)$,$\left(1,0\right)$
and $\left(a,b\right)$ but now take $\left(a,b\right)\in\mathcal{T}_{acute}^{\prime}$ where 
\[
\mathcal{T}_{acute}^{\prime}=\left\{ \left(a,b\right)\mid0\leq a\leq\frac{1}{2},a^{2}+b^{2}\geq1\right\} .
\]


Throughout the paper, and no matter the characterization, we define
$\gamma$ to be the top angle between the sides of length $M,N$.
We define $\beta$ to be the bottom right angle between the sides
of length $N$ and $1$, while $\alpha$ is the bottom left angle
between the sides of length $1$ and $M$. When using the characterization $\mathcal{T}$ it turns out that $1$
is the length of largest side of $\triangle_{a,b}$ and $M\leq N\leq1$. Moreover, using $\mathcal{T}$ we have that $\beta \leq \alpha \leq\gamma$ . When using the characterization $\mathcal{T}^{\prime}$ it turns out
that $1$ is the length of the smallest side of $\triangle_{a,b}$
and $1\leq M\leq N$. Moreover, using $\mathcal{T}^{\prime}$ we have that $ \gamma \leq \beta \leq \alpha$ . When convenient, we denote $\triangle^{M,N}=\triangle_{a,b}$
when we are dealing with the lengths of the sides of the triangle.
Notice that $\left|\triangle_{a,b}\right|=\frac{b}{2}$.

We will use the different characterizations depending on the different
cases that we shall consider and what is most convenient for our computations. The idea
of the proof of Theorem   \ref{thm:MainResult}
is to split the admissible sets $\mathcal{\mathcal{T}},\mathcal{T}^{\prime}$ into various
regions and use different estimating techniques depending on the region.
Often the regions will depend weather $\left(a,b\right)$ is far away
from the equilateral triangle or not.


The equilateral triangle corresponds to $E=\triangle_{\left(\frac{1}{2},\frac{\sqrt{3}}{2}\right)}$
with vertices at $\left(0,0\right),\left(1,0\right),\left(\frac{1}{2},\frac{\sqrt{3}}{2}\right).$
The torsion function for the equilateral triangle $E=\triangle_{\left(\frac{1}{2},\frac{\sqrt{3}}{2}\right)}$
is given by 
\begin{equation}
u_{E}\left(x,y\right)=\frac{1}{2\sqrt{3}}\left(y-\sqrt{3}x\right)\left(y+\sqrt{3}x-\sqrt{3}\right)y,\label{eq:Tor-Equil}
\end{equation}
and a first Dirichlet eigenfunction for $E$ is given by 
\[
\varphi_{E}\left(x,y\right)=\sin\left(\frac{4\pi y}{\sqrt{3}}\right)-\sin\left(2\pi\left(x+\frac{y}{\sqrt{3}}\right)\right)+\sin\left(2\pi\left(x-\frac{y}{\sqrt{3}}\right)\right).
\]

Moreover the following are well known
\[
T\left(E\right)=\frac{\sqrt{3}}{320},\,\,\,\,\,\,\,\,\,\,\,\,\lambda_{1}\left(E\right)=\frac{16\pi^{2}}{3},\,\,\,\,\,\,\,\,\,\,\,\,F\left(E\right)=\frac{T\left(E\right)\lambda_{1}\left(E\right)}{\left|E\right|}=\frac{\pi^{2}}{15}.
\]


\subsection{Preliminary Lower Estimates}\label{prelimLB}

We use various methods to estimate $T\left(D\right)$ . One method
of estimating $T\left(D\right)$ will be through the following variational formula
\begin{equation}
T\left(D\right)=\sup_{v\in H_{0}^{1}(D)\backslash\{0\}}\frac{\left(\int_{D}vdx\right)^{2}}{\int_{D}\left|\nabla v\right|^{2}dx}.\label{eq:TorRig-variational}
\end{equation}
We can find a test function for estimating $T\left(\triangle_{a,b}\right)$
by using a linear transformation of $u_{E}$. In particular, a test function for any triangle is given by  
\begin{align}
v\left(x,y\right) & =u_{E}\left(x-\frac{a-1/2}{b}y,\frac{\sqrt{3}}{2b}y\right)=\frac{3y}{4b^{3}}\left(bx-ay\right)\left(b-bx+(a-1)y\right).\label{eq:Tor-Equil-test-func}
\end{align}
Since the triangle $\bigtriangleup_{a,b}$ is bounded by the lines
$y=\frac{b}{a}x,y=\frac{b}{a-1}\left(x-1\right),y=0$ then it is clear
that $v=0$ on $\partial\bigtriangleup_{a,b}$ and $v\in H_{0}^{1}\left(\bigtriangleup_{a,b}\right)$. This test function is similar to the ones used in \cite{Freitas-2006,Siudeja-2007,Laugesen-Siudeja-2011}  to obtain upper estimates for $\lambda\left(\triangle_{a,b}\right)$ but with $u_E$ replaced by the first eigenfunction $\varphi_E$.

We can then obtain the following estimate on $T\left(\triangle_{a,b}\right)$
with this test function. This bound will help when dealing with triangles
that are closer to the equilateral triangle. 
\begin{lem}
\label{lem:Test-Function-Approach-1}Using the test function $v\left(x,y\right)$
in $(\ref{eq:Tor-Equil-test-func})$ we obtain 
\[
T\left(\bigtriangleup_{a,b}\right)\geq\frac{b^{3}}{80\left(1-a+a^{2}+b^{2}\right)},
\]
for any $\left(a,b\right)\in\mathbb{R}^{2}$.
\end{lem}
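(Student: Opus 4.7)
The plan is to insert the given test function $v$ directly into the variational characterization \eqref{eq:TorRig-variational} (noting $v \in H_0^1(\triangle_{a,b})\setminus\{0\}$) and to exploit the fact that by construction $v = u_E \circ \Phi$, where
\[
\Phi(x,y) = (\xi,\eta) = \Bigl(x - \tfrac{a-1/2}{b}\, y,\ \tfrac{\sqrt{3}}{2b}\, y\Bigr)
\]
is the affine bijection $\triangle_{a,b} \to E$ with constant Jacobian $\sqrt{3}/(2b)$. Both integrals appearing in \eqref{eq:TorRig-variational} can therefore be reduced to integrals over the equilateral triangle $E$ involving only $u_E$ and its partials, avoiding any direct integration of the cubic $v$ over a tilted triangle.

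The numerator is immediate: change of variables together with $T(E) = \sqrt{3}/320$ gives $\int_{\triangle_{a,b}} v\,dx\,dy = \tfrac{2b}{\sqrt{3}}\, T(E) = b/160$. For the Dirichlet integral, I would apply the chain rule to $v = u_E \circ \Phi$, expand $|\nabla v|^2$ as a quadratic form in $u_{E,\xi}$ and $u_{E,\eta}$ with coefficients depending on $(a,b)$, and change variables to $E$ to obtain
\[
\int_{\triangle_{a,b}} |\nabla v|^2\,dx\,dy = \frac{2b}{\sqrt{3}}\left[\left(1 + \frac{(a-1/2)^2}{b^2}\right) A \;-\; \frac{\sqrt{3}\,(a-1/2)}{b^2}\, C \;+\; \frac{3}{4b^2}\, B\right],
\]
where $A = \int_E u_{E,\xi}^2$, $B = \int_E u_{E,\eta}^2$, and $C = \int_E u_{E,\xi}\,u_{E,\eta}$. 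The crux is to evaluate these three numbers. Reflection of $E$ across the vertical line through its centroid fixes $u_E$ while flipping the sign of $u_{E,\xi}$, so the integrand of $C$ is odd and $C = 0$. Applying a $2\pi/3$ rotation $R$ about the centroid to the identity $u_E\circ R = u_E$, differentiating via the chain rule and integrating, yields the linear relation $A - B = -\tfrac{2}{\sqrt{3}}\, C = 0$. Since integration by parts gives $A + B = \int_E |\nabla u_E|^2 = \int_E u_E\cdot(-\Delta u_E) = T(E)$, one concludes $A = B = \sqrt{3}/640$.

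Substituting these values and using $(a-1/2)^2 + 3/4 = a^2 - a + 1$ collapses the bracket to $(1 - a + a^2 + b^2)/b^2$, so that $\int_{\triangle_{a,b}} |\nabla v|^2\,dx\,dy = (1 - a + a^2 + b^2)/(320\,b)$, and \eqref{eq:TorRig-variational} then yields
\[
T(\triangle_{a,b}) \;\geq\; \frac{(b/160)^2}{(1 - a + a^2 + b^2)/(320\,b)} \;=\; \frac{b^3}{80\,(1 - a + a^2 + b^2)},
\]
as claimed. The only real obstacle I anticipate is the bookkeeping: the shear part of $\Phi$ mixes the partials of $u_E$ so that the cross-term $C$ appears in the integrand, and one must know to kill it (together with forcing $A=B$) by the reflection and rotation symmetries of $E$. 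A brute-force expansion of $|\nabla v|^2$ followed by direct integration over $\triangle_{a,b}$ would reach the same conclusion but at considerable algebraic expense and would obscure why the denominator takes such a clean form.
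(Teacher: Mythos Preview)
Your proof is correct and reaches the same intermediate values $\int_{\triangle_{a,b}} v = b/160$ and $\int_{\triangle_{a,b}} |\nabla v|^2 = (1-a+a^2+b^2)/(320b)$ as the paper, but by a genuinely different route. The paper simply performs the two integrations directly over $\triangle_{a,b}$ (writing the limits $ay/b \le x \le (a-1)y/b+1$, $0\le y\le b$) and reports the outcome; it is a one-line brute-force computation with the polynomial $v$ and its gradient.

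Your argument instead pulls everything back to the equilateral triangle via the affine map $\Phi$, reducing the Dirichlet integral to a linear combination of the three scalars $A,B,C$, and then dispatches these by symmetry: the reflection across $\xi=1/2$ kills $C$, the $2\pi/3$ rotation forces $A=B$, and integration by parts gives $A+B=T(E)$. This is more conceptual and has the virtue of explaining \emph{why} the denominator equals $(a-1/2)^2+3/4+b^2=1-a+a^2+b^2$, namely because $A=B$ merges the two quadratic terms. The cost is some setup, and one must be careful that the rotation is about the centroid rather than the origin (though this does not affect the gradient transformation law you use). The paper's approach buys brevity; yours buys insight and would generalize cleanly to other test functions obtained by affinely transplanting a torsion function from a symmetric reference domain.
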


\begin{proof}
A computation shows that 
\begin{align*}
T\left(\triangle_{a,b}\right) & \geq\frac{\left(\int_{0}^{b}\int_{ay/b}^{\left(a-1\right)y/b+1}v\left(x,y\right)dxdy\right)^{2}}{\int_{0}^{b}\int_{ay/b}^{\left(a-1\right)y/b+1}\left|\nabla v\right|^{2}dxdy}=\frac{\left(\frac{b}{160}\right)^{2}}{\left(\frac{1-a+a^{2}+b^{2}}{320b}\right)}=\frac{b^{3}}{80\left(1-a+a^{2}+b^{2}\right)}.
\end{align*}
\end{proof}

A circular sector $S\left(\gamma,\rho\right)$ of radius $\rho$ and
angle $\gamma$ turns out to be a good domain to estimate triangles.
The following lower bound on sectors are good for triangles that are
long and thin. The bound will be in terms of $j_{\nu,k}$ which denotes the $k$th positive zero of the Bessel function $J_{\nu }(x)$. We denote $j_{\nu}=j_{\nu,1}$ its first zero. This lemma follows directly from \cite[Theorem 1.3]{Siudeja-IU-2010} and we state it here for easy reference of its explicit bound. 

\begin{lem}
\label{lem:Bartek-Lem1}Let $\left(a,b\right)\in\mathcal{T}_{acute}^{\prime}$ and $\gamma$ be the angle of triangle $\triangle_{a,b}$
between the edges of length $M$ and $N$. Let $S(\gamma,\rho)$ be
the sector with angle $\gamma$ and radius $\rho$ such that $\left|\triangle_{a,b}\right|=\left|S(\gamma,\rho)\right|$.
Then
\[
\lambda_{1}\left(\triangle_{a,b}\right)\geq\frac{\gamma}{b}j_{\pi/\gamma}^{2}.
\]
Moreover, if $\left(a,b\right)\in\mathcal{T}_{obtuse}$ and
$\beta$ is the angle of triangle $\triangle_{a,b}$ between the edges
of length $1$ and $N=\sqrt{\left(1-a\right)^{2}+b^{2}}$ then 
\[
\lambda_{1}\left(\triangle_{a,b}\right)\geq\frac{\beta}{b}j_{\pi/\beta}^{2}.
\]
\end{lem}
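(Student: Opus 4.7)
The plan is to deduce the lemma directly from Siudeja's sector-comparison theorem (Theorem 1.3 of \cite{Siudeja-IU-2010}), which asserts that for any triangle $\triangle$ one has $\lambda_1(\triangle) \geq \lambda_1(S(\theta,\rho))$, where $S(\theta,\rho)$ is the circular sector of opening $\theta$ and radius $\rho$ chosen to have the same area as $\triangle$ and with $\theta$ equal to the smallest angle of $\triangle$. Granting this result, each of the two claimed bounds reduces to a short computation plus an identification of which angle is smallest in each parameter regime.

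First I would record the first Dirichlet eigenvalue of the sector. Separation of variables in polar coordinates on $S(\theta,\rho)$, with Dirichlet conditions on the two radial edges, forces the lowest angular mode to be $\sin(\pi\phi/\theta)$; the radial factor is then the Bessel function $J_{\pi/\theta}$, so
\[
\lambda_1\bigl(S(\theta,\rho)\bigr) \;=\; \frac{j_{\pi/\theta}^{\,2}}{\rho^{2}}.
\]
Since $|S(\theta,\rho)| = \theta\rho^2/2$ while $|\triangle_{a,b}| = b/2$, the equal-area condition yields $\rho^2 = b/\theta$, and hence
\[
\lambda_1\bigl(S(\theta,\rho)\bigr) \;=\; \frac{\theta}{b}\, j_{\pi/\theta}^{\,2}.
\]

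Second I would identify the smallest angle in each of the two parametrizations. For $(a,b) \in \mathcal{T}_{acute}^{\prime}$ the ordering recorded in Section \ref{proof-set-up} is $\gamma \leq \beta \leq \alpha$, so the smallest angle is $\gamma$ (the opening between the edges of lengths $M$ and $N$); applying Siudeja's comparison with $\theta = \gamma$ delivers the first inequality. For $(a,b) \in \mathcal{T}_{obtuse}$ the ordering is $\beta \leq \alpha \leq \gamma$, so the smallest angle is $\beta$, and the second inequality follows with $\theta = \beta$.

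The only genuine mathematical input is the invocation of Siudeja's theorem; what the lemma contributes is packaging the resulting bound in closed form in the parameters $(a,b)$ and the relevant angle. No technical obstacle is anticipated, and the only care needed is to match the parameter conventions consistently between $\mathcal{T}_{acute}^{\prime}$ and $\mathcal{T}_{obtuse}$ so that the smallest-angle hypothesis of the comparison theorem is not misapplied.
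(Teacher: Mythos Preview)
Your proposal is correct and follows essentially the same route as the paper: identify the smallest angle in each parametrization, invoke Siudeja's sector-comparison theorem (Theorem 1.3 of \cite{Siudeja-IU-2010}), and use the equal-area condition $\rho^2=b/\theta$ together with $\lambda_1(S(\theta,\rho))=j_{\pi/\theta}^2/\rho^2$ to obtain the stated bounds.
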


\begin{proof}
If $\left(a,b\right)\in\mathcal{T}_{acute}^{\prime}$ then $\gamma$ is the smallest angle. Let $S(\gamma,\rho)$
the sector such that $\left|\triangle_{a,b}\right|=\left|S(\gamma,\rho)\right|$.
Since $\left|\triangle_{a,b}\right|=\frac{b}{2}$ and $\left|S(\gamma,\rho)\right|=\frac{\rho^{2}\gamma}{2}$,
then $\frac{b}{2}=\frac{\rho^{2}\gamma}{2}$ so that $\rho=\sqrt{\frac{b}{\gamma}}$
. Now by \cite[Theorem 1.3]{Siudeja-IU-2010} it is shown that 
\begin{align*}
\lambda_{1}\left(\triangle_{a,b}\right) & \geq\lambda_{1}\left(S(\gamma,\rho)\right)=\frac{j_{\pi/\gamma}^{2}}{\rho^{2}}=\frac{\gamma}{b}j_{\pi/\gamma}^{2},
\end{align*}
where $j_{\nu}$ is the first zero of the Bessel function $J_{\nu}$. If $\left(a,b\right)\in\mathcal{T}_{obtuse}$, then 
the angle $\beta$ is the smallest angle hence the rest of the proof is done similarly.
\end{proof}

Another method we will use throughout the paper will be the domain
monotonicity properties of $T\left(D\right)$ and $\lambda_{1}\left(D\right)$.
It is clear from the variational principal of both $\lambda_{1}\left(D\right)$
and $T\left(D\right)$ that if $D_{1}\subset D_{2}$ then $\lambda_{1}\left(D_{1}\right)\geq\lambda_{1}\left(D_{2}\right)$
while $T\left(D_{1}\right)\leq T\left(D_{2}\right)$. The domain monotonicity
of $T$ also follows easily from the probabilistic definition of $T\left(D\right)=\frac{1}{2}\int_D\mathbb{E}_{x}\left[\tau_{D}\right]dx$. This is clear since if $D_{1}\subset D_{2}$ then a Brownian path
$B_{t}$ started in $D_{1}$ has to exit $D_{1}$ before exiting $D_{2}$. Hence $\mathbb{E}_{x}\left[\tau_{D_{1}}\right]\leq\mathbb{E}_{x}\left[\tau_{D_{2}}\right]$
for $x\in D_{1}$,  which implies $T\left(D_{1}\right)\leq T\left(D_{2}\right)$.

The following bound will be useful when dealing  with  tall and long triangles.  

\begin{lem}[Bound for Acute/Right triangle case]
\label{lem:TorRigbound-1} Consider a triangle $\triangle^{M,N}$
of side lengths $1,M,N$ where $1\leq M\leq N$. Suppose 
(1) $M\geq2$, or (2) $0\leq\gamma\leq\frac{\pi}{4}$
holds.  
Then 

\begin{align*}
T\left(\triangle^{M,N}\right) & \geq\frac{h^{4}}{16}\left(\tan\gamma-\gamma-\frac{124\zeta\left(5\right)\gamma^{4}}{\pi^{5}}\right),
\end{align*}
where $\gamma$ is the angle between the sides of length $M,N$ and $h$
is the altitude of the isosceles triangle $T_{iso}$ of lengths $M,M,c$
with the same angle $\gamma$ between the side lengths $M$ and $M$.
Note that $h$ satisfies $h\geq\sqrt{M^{2}-1/4}$  

\end{lem}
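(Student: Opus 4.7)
The plan is to apply domain monotonicity twice, first replacing $\triangle^{M,N}$ by the inscribed isosceles triangle $T_{iso}$, then replacing $T_{iso}$ by an inscribed circular sector $S(\gamma,h)$ of apex angle $\gamma$ and radius $h$, for which $T$ can be computed explicitly by separation of variables. The lemma then reduces, via the Mittag--Leffler expansion of $\tan$, to a clean series inequality that holds with equality at $\gamma=0$ and can be proved term by term for $\gamma\in[0,\pi/4]$.

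First I would place $\triangle^{M,N}$ with its angle $\gamma$ at the origin, with its side of length $M$ along the positive $x$-axis and its side of length $N$ along the ray $\theta=\gamma$. Placing $T_{iso}$ the same way gives vertices $(0,0)$, $(M,0)$, $(M\cos\gamma,M\sin\gamma)$, the last of which lies on the edge of $\triangle^{M,N}$ joining the origin to $(N\cos\gamma,N\sin\gamma)$ (since $M\le N$). Hence $T_{iso}\subset\triangle^{M,N}$. The base of $T_{iso}$ in polar coordinates is $r\cos(\theta-\gamma/2)=h$, so $r\ge h$ on the base, which means the whole sector $S(\gamma,h)=\{0\le\theta\le\gamma,\,0\le r\le h\}$ lies in $T_{iso}$. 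Domain monotonicity gives $T(\triangle^{M,N})\ge T(T_{iso})\ge T(S(\gamma,h))$. The estimate $h\ge\sqrt{M^2-1/4}$ follows from $h^2=M^2-(c/2)^2$ together with the law of cosines, which yields $c^2-1=(N-M)(2M\cos\gamma-M-N)\le 0$. Note also that case (1) is subsumed by case (2): if $M\ge 2$ then $\cos\gamma=(M^2+N^2-1)/(2MN)\ge 7/8>\cos(\pi/4)$, so $\gamma<\pi/4$.

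Next I would compute $T(S(\gamma,h))$ by separating variables in polar coordinates and expanding $-1$ in the Fourier sine series on $(0,\gamma)$. Writing $u=\sum_{n\text{ odd}}a_n(r)\sin(n\pi\theta/\gamma)$, each $a_n$ satisfies a Cauchy--Euler equation with constant forcing whose bounded-at-zero solution vanishing at $r=h$ is $A_n[r^2-h^2(r/h)^{n\pi/\gamma}]$ with $A_n=4\gamma^2/(n\pi(n^2\pi^2-4\gamma^2))$. Integrating gives
\[
T(S(\gamma,h))=\frac{2\gamma^3 h^4}{\pi^2}\sum_{n\text{ odd}}\frac{1}{n^2(n\pi+2\gamma)^2}.
\]
Combining this with the Mittag--Leffler identity $\tan\gamma-\gamma=\sum_{n\text{ odd}}32\gamma^3/(n^2\pi^2(n^2\pi^2-4\gamma^2))$ and subtracting yields
\[
\frac{h^4}{16}(\tan\gamma-\gamma)-T(S(\gamma,h))=\frac{8h^4\gamma^4}{\pi^2}\sum_{n\text{ odd}}\frac{1}{n^2(n\pi+2\gamma)^2(n\pi-2\gamma)},
\]
so that the lemma is equivalent to
\[
\sum_{n\text{ odd}}\frac{1}{n^2(n\pi+2\gamma)^2(n\pi-2\gamma)}\le\frac{31\zeta(5)}{32\pi^3}=\frac{1}{\pi^3}\sum_{n\text{ odd}}\frac{1}{n^5},
\]
i.e.\ the sum is bounded by its value at $\gamma=0$.

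The main obstacle is this final inequality, which I plan to establish term by term: a direct calculation gives $(n\pi+2\gamma)^2(n\pi-2\gamma)-n^3\pi^3=2\gamma(n^2\pi^2-2n\pi\gamma-4\gamma^2)$, which is nonnegative exactly when $\gamma\le n\pi(\sqrt{5}-1)/4$. The most restrictive case is $n=1$, where the threshold is $\pi(\sqrt{5}-1)/4\approx 0.971$, strictly larger than $\pi/4\approx 0.785$; thus the term-wise bound holds uniformly on $[0,\pi/4]$ for every odd $n\ge 1$. The argument hinges on the numerical fact $(\sqrt{5}-1)/4>1/4$; otherwise the $n=1$ mode would need a separate, more delicate analysis. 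The remainder of the proof consists of standard separation of variables and partial-fraction manipulations.
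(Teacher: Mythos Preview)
Your argument is correct and follows essentially the same route as the paper: inscribe the sector $S(\gamma,h)$ in $\triangle^{M,N}$ via the isosceles triangle, use the explicit torsional rigidity of the sector, and bound the resulting series term by term via $(n\pi+2\gamma)^{2}(n\pi-2\gamma)\ge n^{3}\pi^{3}$ for $\gamma\le\pi/4$. Your observation that condition (1) already forces $\gamma<\pi/4$ (so that case (2) subsumes case (1)) is a neat simplification that the paper does not make---it treats the two cases separately to reach the same termwise bound.
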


\begin{proof}
Consider the circular sector $$S\left(\alpha,r_{0}\right)=\left\{ \left(r,\phi\right):0<r<r_{0},-\alpha/2<\phi<\alpha/2\right\}.$$ 
It is known that (see \cite[pp. 278-280]{Timoshenko-Goodier} and \cite[Equation (5.6)]{Vandenberg-Ferone-Nitsch-Trombetti-2019a})
\begin{align*}
u_{S\left(\alpha,r_{0}\right)}\left(r,\phi\right) & =\frac{r^{2}}{4}\left(\frac{\cos\left(2\phi\right)}{\cos\alpha}-1\right)\\
 & +\frac{4r_{0}^{2}\alpha^{2}}{\pi^{3}}\sum_{n=1,3,5,\dots}\left(-1\right)^{\left(n+1\right)/2}\frac{\left(\frac{r}{r_{0}}\right)^{n\pi/\alpha}\cos\left(\frac{n\pi\phi}{\alpha}\right)}{n\left(n+\frac{2\alpha}{\pi}\right)\left(n-\frac{2\alpha}{\pi}\right)}
\end{align*}
and 
\begin{align*}
T\left(S\left(\alpha,r_{0}\right)\right) & =\int_{0}^{r_{0}}\int_{-\alpha/2}^{\alpha/2}u_{S\left(\alpha,r_{0}\right)}\left(r,\phi\right)rdrd\phi\\
 & =\frac{r_{0}^{4}}{16}\left(\tan\alpha-\alpha-\frac{128\alpha^{4}}{\pi^{5}}\sum_{n=1,3,5,\dots}\frac{1}{n^{2}\left(n+\frac{2\alpha}{\pi}\right)^{2}\left(n-\frac{2\alpha}{\pi}\right)}\right).
\end{align*}
Recall that $\gamma$ denotes the angle between the sides of length
$M$ and $N$. Consider the isosceles triangle with angle $\gamma$
and side lengths $M$. It is clear that this triangle is inside $\Delta^{M,N}$.
The shortest side of this isosceles triangle cannot have length greater
than $1$.  Thus  its altitude $h$ satisfies $h\geq\sqrt{M^{2}-\frac{1}{4}}$. See Figure \ref{Pic-Sector-1}.

\begin{figure}[h]
    \centering
    \includegraphics[width=0.3\textwidth]{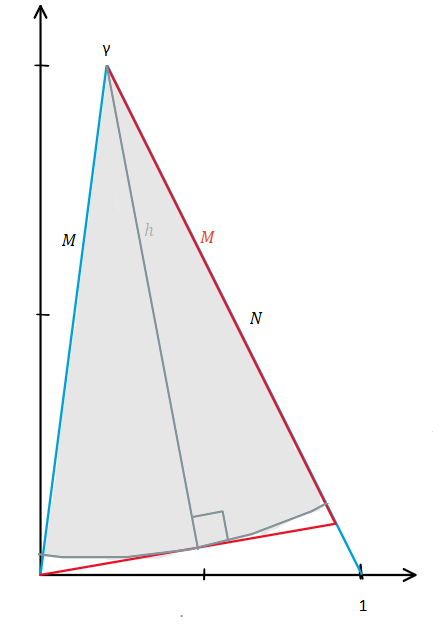}
    \caption{Sector $S(\gamma,h)$}
    \label{Pic-Sector-1}
\end{figure}

Hence the sector $S\left(\gamma,h\right)$ satisfies $\Delta^{M,N}\supset S\left(\gamma,h\right)$
so that 
\begin{align*}
T\left(\Delta^{M,N}\right) & \geq T\left(S\left(\gamma,h\right)\right)\\
 & =\frac{h^{4}}{16}\left(\tan\gamma-\gamma-\frac{128\gamma^{4}}{\pi^{5}}\sum_{n=1,3,5}\frac{1}{n^{2}\left(n+\frac{2\gamma}{\pi}\right)^{2}\left(n-\frac{2\gamma}{\pi}\right)}\right).
\end{align*}

Given $M$, we know that $0\leq\gamma\leq2\sin^{-1}\left(\frac{1}{2M}\right)$
since an isosceles triangles maximizes $\gamma$. 

\underline{Case (1):} 
An elementary computation shows that if $$n\geq\frac{2\sin^{-1}\left(\frac{1}{2M}\right)}{\pi}\left(1+\sqrt{5}\right)$$
then 
\[
\min_{0\leq\gamma\leq2\sin^{-1}\left(\frac{1}{2M}\right)}\left(n+\frac{2\gamma}{\pi}\right)^{2}\left(n-\frac{2\gamma}{\pi}\right)=n^{3}.
\]
Hence, this is true whenever  $n\geq1$ and  $M\geq2$.

\underline{Case (2):} A similar elementary computation shows that 
\[
\min_{0\leq\gamma\leq\frac{\pi}{4}}\left(n+\frac{2\gamma}{\pi}\right)^{2}\left(n-\frac{2\gamma}{\pi}\right)=n^{3}.
\]
as long as   $n\geq\frac{1}{4}\left(1+\sqrt{5}\right)\approx.78$. Hence this minimization problem is true for all $n\geq1$.

In both cases we can use that fact that for all $n\geq1$ and
all admissible $\gamma$,  we have 
\[
n^{2}\left(n+\frac{2\gamma}{\pi}\right)^{2}\left(n-\frac{2\gamma}{\pi}\right)\geq n^{5}
\]
which gives 
\[
T\left(\triangle_{M,N}\right)\geq\frac{h^{4}}{16}\left(\tan\gamma-\gamma-\frac{128\gamma^{4}}{\pi^{5}}\sum_{n=1,3,5}\frac{1}{n^{5}}\right).
\]
Since $$\sum_{n=1,3,5}\frac{1}{n^{5}}=\frac{31\zeta\left(5\right)}{32},$$
we can rewrite 
\begin{align*}
T\left(\triangle_{M,N}\right) & \geq\frac{h^{4}}{16}\left(\tan\gamma-\gamma-\frac{31\zeta\left(5\right)}{32}\frac{128\gamma^{4}}{\pi^{5}}\right)=\frac{h^{4}}{16}\left(\tan\gamma-\gamma-\frac{124\zeta\left(5\right)\gamma^{4}}{\pi^{5}}\right), 
\end{align*}
which is the desired lower bound. 
\end{proof}
We also need the following elementary geometric lemma which is proved here for completeness. 
\begin{lem}
\label{lem:altitude-h}Consider a right triangle $\bigtriangleup_{0,M}=\bigtriangleup^{M,\sqrt{M^{2}+1}}$
and let $\gamma$ be the angle between the sides $M$ and $N=\sqrt{M^{2}+1}$
. Let $h$ be the altitude of the isosceles triangle $T_{iso}$ of
lengths $M,M,c$ with the same angle $\gamma$ between the side lengths
$M$ and $M$. Then 
\[
h=\frac{M}{\sqrt{2}}\sqrt{1+\frac{M}{N}}.
\]
\end{lem}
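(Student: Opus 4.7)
The statement is a short trigonometric identity, so the plan is a direct computation in two steps.

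First, I would compute $\cos\gamma$ in the right triangle $\triangle_{0,M} = \triangle^{M,\sqrt{M^2+1}}$. Since the sides are $1, M, N$ with $N = \sqrt{M^2+1}$, the law of cosines applied to the side of length $1$ (opposite to $\gamma$) gives
\[
1 = M^2 + N^2 - 2MN\cos\gamma,
\]
and substituting $N^2 = M^2+1$ yields $\cos\gamma = M/N$. (Equivalently, $N$ is the hypotenuse and $\gamma$ is the acute angle adjacent to the leg of length $M$, so $\cos\gamma = M/N$ directly.)

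Second, in the isosceles triangle $T_{iso}$ with two sides of length $M$ meeting at angle $\gamma$, the altitude from the apex to the opposite side bisects $\gamma$, so elementary right-triangle trigonometry gives $h = M\cos(\gamma/2)$. Applying the half-angle identity,
\[
\cos(\gamma/2) = \sqrt{\frac{1+\cos\gamma}{2}} = \frac{1}{\sqrt{2}}\sqrt{1 + \tfrac{M}{N}},
\]
and hence
\[
h = M\cos(\gamma/2) = \frac{M}{\sqrt{2}}\sqrt{1 + \frac{M}{N}},
\]
as claimed. There is no real obstacle here; the only subtlety is making sure $\gamma$ really is the angle adjacent to the leg $M$ in the right triangle (rather than opposite to it), which is forced by the convention that $\gamma$ sits between the sides of lengths $M$ and $N$.
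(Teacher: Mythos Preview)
Your proof is correct and actually more direct than the paper's. The paper proceeds by first computing $\cos\beta = 1/N$ for the angle $\beta$ between the sides of length $1$ and $N$, then uses the law of cosines in an auxiliary triangle $T_{bottom}$ with sides $c,\,N-M,\,1$ to solve for the base $c = \sqrt{2}M\sqrt{1 - M/N}$, and finally applies the Pythagorean theorem to the half-triangle inside $T_{iso}$ to get $h^{2} = M^{2} - c^{2}/4$. You bypass all of this by observing $\cos\gamma = M/N$ at once and then using $h = M\cos(\gamma/2)$ together with the half-angle identity, which gets to the answer in two lines instead of several. The paper's route has the minor advantage of exhibiting $c$ explicitly (which is not used elsewhere), while yours is cleaner and avoids introducing the intermediate triangle $T_{bottom}$.
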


\begin{proof}
First note that $\bigtriangleup_{0,M}=\bigtriangleup^{M,\sqrt{M^{2}+1}}$.
Recall that $h$ is the altitude between the isosceles triangle $T_{iso}$
of length $M,M,c$ and angle $\gamma$ between the two side lengths
$M$. The law of cosines says that if $\theta$ is the angle between
side lengths $a,b$ , and $c$ is the opposite side of $\theta$ then
$
c^{2}=a^{2}+b^{2}-2ab\cos\theta.
$

First we find the angle $\beta$ of $\bigtriangleup_{0,b}=\bigtriangleup^{M,\sqrt{M^{2}+1}}$
between side lengths $1,N$, and note that $\cos\beta  =\frac{1}{N}$. Let $T_{bottom}$ be the triangle of side length
$c,N-M,1$, (see Figure \ref{Pic-h-calculation})
\begin{figure}[h]
    \centering
    \includegraphics[width=0.4\textwidth]{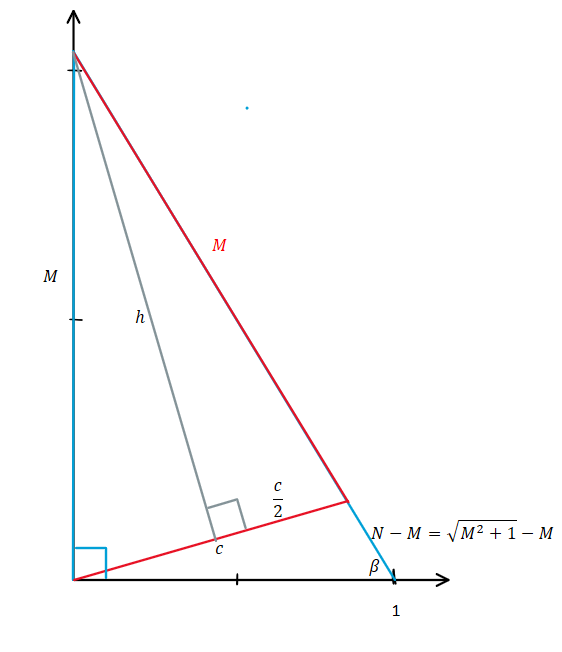}
    \caption{Picture of $T_{iso}$}
    \label{Pic-h-calculation}
\end{figure}
we can solve for $c$; 
\begin{align*}
c^{2} & =1+\left(N-M\right)^{2}-2\left(N-M\right)\cos\beta\\
 & =1+\left(\sqrt{M^{2}+1}-M\right)^{2}-\frac{2\left(\sqrt{M^{2}+1}-M\right)}{\sqrt{M^{2}+1}} =2M^{2}\left(1-\frac{M}{N}\right). 
\end{align*}
Hence,   
$
c=\sqrt{2}M\sqrt{1-\frac{M}{N}}.
$

Let  $T_{right}$ be the right triangle inside $T_{iso}$ of side
lengths $h,M,\frac{\sqrt{2}}{2}M\sqrt{1-\frac{M}{N}}$. Using the
Pythagorean theorem we obtain 
$$
h^{2}+\frac{2}{4}M^{2}\left(1-\frac{M}{N}\right)=M^{2}
$$
so that 
\begin{align*}
h^{2} & =M^{2}-\frac{1}{2}M^{2}\left(1-\frac{M}{N}\right) =\frac{M^{2}}{2}\left(1+\frac{M}{N}\right).
\end{align*}
Hence 
$
h=\frac{M}{\sqrt{2}}\sqrt{1+\frac{M}{N}},
$ as desired. 
\end{proof}


\section{Proof of Theorem \ref{thm:MainResult}:
Lower Bound for Acute and Right Triangles}\label{sec:Lower-Triangle}

We will split the proof into two main cases. See Figure \ref{Acute-LowerBound-v2} for a picture of the regions for $(a,b)$.

\begin{figure}[h]
    \centering
    \includegraphics[width=0.4\textwidth]{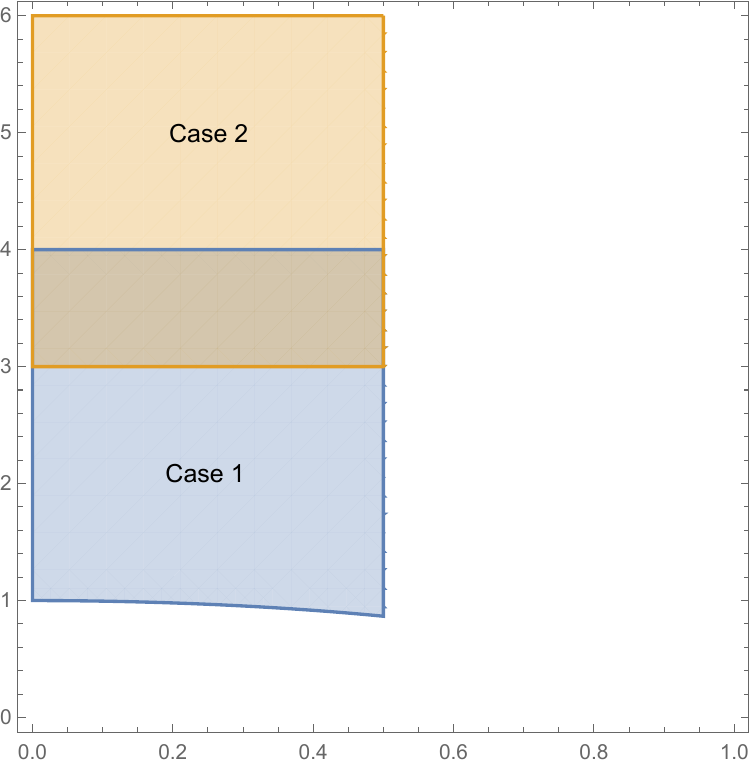}
    \caption{Cases for Lower bound for acute and right triangles }
    \label{Acute-LowerBound-v2}
\end{figure}

We first consider acute and right triangles that are close to the
isosceles right triangle and equilateral triangle. 

\begin{prop}[Case 1]
\label{prop:Lower-Acute-b-low}If $\left(a,b\right)\in\mathcal{T}_{acute}^{\prime}$ then 
\[
\frac{\pi^{2}}{24}< F\left(\bigtriangleup_{a,b}\right),\text{ for }\frac{\sqrt{3}}{2}\leq b\leq4.
\]
\end{prop}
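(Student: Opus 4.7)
The plan is to combine the two lower bounds established in Section~\ref{prelimLB}: the test-function bound for $T$ from Lemma~\ref{lem:Test-Function-Approach-1}, which is effective for triangles near the equilateral shape, and the sector-comparison bound for $\lambda_{1}$ from Lemma~\ref{lem:Bartek-Lem1}. Since $(a,b)\in\mathcal{T}_{acute}^{\prime}$, the base of length $1$ is the smallest side of $\triangle_{a,b}$, so the apex angle $\gamma$ is the smallest angle; by the law of cosines,
$$\cos\gamma = \frac{a^{2}-a+b^{2}}{\sqrt{a^{2}+b^{2}}\,\sqrt{(a-1)^{2}+b^{2}}}.$$

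First, Lemma~\ref{lem:Test-Function-Approach-1} gives $T(\triangle_{a,b})\geq \dfrac{b^{3}}{80(1-a+a^{2}+b^{2})}$ and Lemma~\ref{lem:Bartek-Lem1} gives $\lambda_{1}(\triangle_{a,b})\geq \dfrac{\gamma}{b}\, j_{\pi/\gamma}^{2}$. Since $|\triangle_{a,b}|=b/2$, multiplying yields the explicit two-variable bound
$$F(\triangle_{a,b})\;\geq\;\frac{\gamma\, b\, j_{\pi/\gamma}^{2}}{40(1-a+a^{2}+b^{2})}.$$
The proposition therefore reduces to verifying
$$g(a,b):=\gamma\, b\, j_{\pi/\gamma}^{2}\;-\;\tfrac{5\pi^{2}}{3}\bigl(1-a+a^{2}+b^{2}\bigr)\;>\;0$$
uniformly on the compact region $\{0\leq a\leq 1/2,\ \sqrt{3}/2\leq b\leq 4\}\cap\mathcal{T}_{acute}^{\prime}$. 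At the equilateral vertex $(a,b)=(1/2,\sqrt{3}/2)$ this recovers the familiar slack $F(E)=\pi^{2}/15$, while on the far side $b=4$ the bound is still comfortably above $\pi^{2}/24$.

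The main obstacle is the transcendental Bessel zero $j_{\pi/\gamma}$. I would handle this by substituting a sufficiently sharp elementary lower bound for $j_{\nu}^{2}$ valid on the range of $\nu=\pi/\gamma$ realized on the region, for instance a two-term bound of the form $j_{\nu}^{2}\geq \nu^{2}+2\nu+c$ (with $c$ chosen so the bound is valid once $\nu\geq\nu_{0}$). This replaces $j_{\pi/\gamma}^{2}$ by an explicit rational function of $\gamma$, and after plugging in the law-of-cosines expression for $\gamma$ the inequality $g(a,b)>0$ becomes a polynomial/rational inequality on the rectangle $[0,1/2]\times[\sqrt{3}/2,4]$, which is verified by the technique of Siudeja employed elsewhere in the paper, namely checking sign on boundary curves and interior critical points.

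If a single elementary bound for $j_{\nu}$ is not sharp enough over the whole range of $b$, I would partition the region into a near-equilateral sub-region $\sqrt{3}/2\leq b\leq b_{0}$ (where a continuity/perturbation argument off $F(E)=\pi^{2}/15$ suffices, using the smoothness of $\lambda_{1}$ and $T$ in the triangle parameters and the slack in $g$ at equilateral) and a complementary sub-region $b_{0}\leq b\leq 4$ where $\pi/\gamma$ is large and the asymptotic $j_{\nu}\sim\nu$ makes the combined bound tight; on each piece the relevant explicit inequality is straightforward to check. The upper cutoff $b=4$ is chosen precisely to keep this reduction polynomial and avoid the genuinely delicate regime of thin triangles, which is handled separately in Proposition~\ref{prop:Lower-Acute-b-high}.
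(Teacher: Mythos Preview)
Your overall plan---combine the test-function torsion bound of Lemma~\ref{lem:Test-Function-Approach-1} with the sector eigenvalue bound of Lemma~\ref{lem:Bartek-Lem1} and reduce to an explicit inequality---is the right flavor and is essentially what the paper does in its \emph{second} sub-case ($1\le b\le 4$). But several of your execution details are either incorrect or miss ingredients that the paper actually relies on.

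First, the paper does \emph{not} use the sector bound on the whole strip. Near the equilateral end ($\sqrt{3}/2\le b\le 2.9$) it instead uses the purely algebraic Freitas--Siudeja inequality $\lambda_1(\triangle_{a,b})\ge \pi^2\bigl(1/d+1/h\bigr)^2$, which combined with Lemma~\ref{lem:Test-Function-Approach-1} gives a rational function of $(a,b)$ that one checks by hand---no Bessel zeros and no Siudeja algorithm needed there. For the overlap range $1\le b\le 4$ the paper first applies Steiner symmetrization, $\lambda_1(\triangle_{a,b})\ge\lambda_1(\triangle_{1/2,b})$, \emph{before} invoking Lemma~\ref{lem:Bartek-Lem1}. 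This is the step you are missing: it replaces the two-variable apex angle $\gamma=\gamma(a,b)$ by $\gamma_b=2\tan^{-1}(1/(2b))$, collapsing the problem to a single variable; the substitution $x=\tan^{-1}(1/(2b))$ then yields a genuine one-variable inequality amenable to Taylor bounds on $\tan x$ and the polynomial algorithm.

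Second, your suggested Bessel bound $j_\nu^{2}\ge \nu^{2}+2\nu+c$ is of the wrong order and is provably insufficient near $b=4$. The relevant asymptotic is $j_\nu\sim \nu+c_1\nu^{1/3}$, i.e.\ $j_\nu^{2}\sim \nu^{2}+2c_1\nu^{4/3}+\cdots$, and the paper uses exactly the Qu--Wong lower bound $j_\nu>\nu-(a_1/2^{1/3})\nu^{1/3}$ with $-a_1\approx 2.338$. With only a linear correction the combined estimate drops well below $\pi^{2}/24$ at $b=4$.

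Finally, your claim that ``plugging in the law-of-cosines expression for $\gamma$'' produces a polynomial/rational inequality in $(a,b)$ is not right: after substituting any bound of the form $j_{\pi/\gamma}^{2}\ge$ (polynomial in $\pi/\gamma$), the quantity $\gamma$ itself (not $\cos\gamma$) still appears, and $\gamma=\arccos(\cdot)$ is transcendental in $(a,b)$. This is precisely why the paper's Steiner step---turning everything into a function of $x=\tan^{-1}(1/(2b))$ with $\gamma_b=2x$---is not cosmetic but essential.
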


\begin{proof}

We split the rest of the proof into two cases with some overlap, of which there are some overlap. 

\textbf{\underline{Case 1a:}} Consider the region $\frac{\sqrt{3}}{2}\leq b\leq2.9$. Recall that this includes the equilateral triangle $E=\triangle_{\frac{1}{2},\frac{\sqrt{3}}{2}}$. 
By a result of Freitas and Siudeja in \cite[Corollary 4.1]{Freitas-Siudeja-2010} we have the following bound for
the eigenvalue of a triangle, 
\[
\lambda_{1}\left(\triangle_{a,b}\right)\geq\pi^{2}\left(\frac{1}{d}+\frac{1}{h}\right)^{2},
\]
where $d$ is the diameter and $h$ is the height perpendicular to
its longest side. If $\left(a,b\right)\in\mathcal{T}_{acute}^{\prime}$ then $d=N=\sqrt{\left(a-1\right)^{2}+b^{2}}$.  Since $\frac{b}{2}=\left|\triangle_{a,b}\right|=\frac{1}{2}Nh$, then
$h=\frac{b}{N}$ so that 
\begin{align*}
\lambda_{1}\left(\triangle_{a,b}\right) & \geq\pi^{2}\left(\frac{1}{\sqrt{\left(a-1\right)^{2}+b^{2}}}+\frac{\sqrt{\left(a-1\right)^{2}+b^{2}}}{b}\right)^{2}\\
 & =\pi^{2}\frac{\left(\left(a-1\right)^{2}+b^{2}+b\right)^{2}}{b^{2}\left(\left(a-1\right)^{2}+b^{2}\right)}.
\end{align*}

Putting this together with the bound $$T\left(\triangle_{a,b}\right)\geq\frac{b^{3}}{80\left[1-a+a^{2}+b^{2}\right]}$$ from Lemma \ref{lem:Test-Function-Approach-1}, gives 
\begin{align*}
F\left(\triangle_{a,b}\right)=\frac{\lambda_{1}\left(\triangle_{a,b}\right)T\left(\triangle_{a,b}\right)}{\left|\triangle_{a,b}\right|} & \geq\frac{\pi^{2}\frac{\left(\left(a-1\right)^{2}+b^{2}+b\right)^{2}}{b^{2}\left(\left(a-1\right)^{2}+b^{2}\right)}\cdot\frac{b^{3}}{80\left[1-a+a^{2}+b^{2}\right]}}{\frac{b}{2}}\\
 & =\frac{\pi^{2}}{24}\frac{3}{5}\frac{\left(\left(a-1\right)^{2}+b^{2}+b\right)^{2}}{\left(\left(a-1\right)^{2}+b^{2}\right)\left(\left(a-1\right)^{2}+b^{2}+a\right)}.
\end{align*}
Define $$g\left(a,b\right)=\frac{3}{5}\frac{\left(\left(a-1\right)^{2}+b^{2}+b\right)^{2}}{\left(\left(a-1\right)^{2}+b^{2}\right)\left(\left(a-1\right)^{2}+b^{2}+a\right)}.$$
An elementary calculation shows that 
\[
g\left(a,b\right)\geq g\left(\frac{1}{2},2.9\right)=\frac{501126}{495785}>1,\text{ for }0\leq a\leq\frac{1}{2},\frac{\sqrt{3}}{2}\leq b\leq2.9,
\]
which gives $F\left(\triangle_{a,b}\right)\geq\frac{\pi^{2}}{24}$
as needed.

\textbf{\underline{Case 1b:}} We consider the region $1\leq b\leq4$. 

Here we estimate $\lambda_{1}\left(\triangle_{\frac{1}{2},b}\right)$
differently. First, using Steiner symmetrization with respect to the horizontal $x-$axis
we have that since $\left|\triangle_{a,b}\right|=\left|\triangle_{\frac{1}{2},b}\right|,$ then
$$
\lambda_{1}\left(\triangle_{a,b}\right)\geq\lambda_{1}\left(\triangle_{\frac{1}{2},b}\right).
$$
Let $\gamma_{b}$ be the smallest angle between the sides
$M_{\frac{1}{2},b},N_{\frac{1}{2},b}$ of $\triangle_{\frac{1}{2},b}$.
Let $S\left(\gamma_{b},\rho\right)$ be the circular sector such that
$\left|\triangle_{a,b}\right|=\left|S\left(\gamma,\rho\right)\right|$.
By Lemma \ref{lem:Bartek-Lem1} we have that
\begin{align*}
\lambda_{1}\left(\triangle_{a,b}\right) & \geq\frac{\gamma_{b}}{b}j_{\pi/\gamma_{b}}^{2}.
\end{align*}

By Lemma \ref{lem:Test-Function-Approach-1} and using the fact that 
 $0< a\leq\frac{1}{2}$ so that  $-a+a^{2}=a\left(a-1\right)<0$,
we have
\[
T\left(\triangle_{a,b}\right)\geq\frac{b^{3}}{80\left[1-a+a^{2}+b^{2}\right]}> \frac{b^{3}}{80\left(1+b^{2}\right)}.
\]

Putting these bounds together we obtain, 
\begin{align*}
F\left(\triangle_{a,b}\right)=  \frac{T\left(\triangle_{a,b}\right)\lambda_{1}\left(\triangle_{a,b}\right)}{\left|\triangle_{a,b}\right|} & \geq\frac{\frac{b^{3}}{80\left(1+b^{2}\right)}\cdot\frac{\gamma_{b}}{b}j_{\pi/\gamma_{b}}^{2}}{\frac{b}{2}}\\
 & =\frac{b}{40\left(1+b^{2}\right)}\cdot\gamma_{b}j_{\pi/\gamma_{b}}^{2}.
\end{align*}
The  zeros of Bessel function can be bound by $j_{\nu,k}>\nu-\frac{a_{k}}{2^{1/3}}\nu^{1/3}$
given in \cite{Qu-Wong-1999} where $a_k$ is the $k$th negative zero of the Airy function $\text{Ai}(x)$. We then have that
\[
j_{\pi/\gamma_{b}}^{2}\geq\left(\frac{\pi}{\gamma_{b}}-\frac{a_{1}}{2^{1/3}}\left(\frac{\pi}{\gamma_{b}}\right)^{1/3}\right)^{2}.
\]

Using the known fact that  $-a_{1}\geq2.338107>2.3=:k$, it follows that 
\begin{align*}
F\left(\triangle_{a,b}\right) & \geq\frac{b\gamma_{b}}{40\left(1+b^{2}\right)}\left(\frac{\pi}{\gamma_{b}}+\frac{k}{2^{1/3}}\left(\frac{\pi}{\gamma_{b}}\right)^{1/3}\right)^{2}.
\end{align*}
A simple computation leads to 
$$\gamma_{b}=2\sin^{-1}\left(\frac{1}{2\sqrt{b^{2}+\frac{1}{4}}}\right)=2\tan^{-1}\left(\frac{1}{2b}\right).$$
Making the substitution by letting $x=\tan^{-1}\left(\frac{1}{2b}\right)$
leads to  $b=\frac{1}{2\tan x}$and $\gamma_{b}=2x$.  Hence 
\begin{align*}
F\left(\triangle_{a,b}\right) & \geq\frac{\frac{2x}{2\tan x}}{40\left(1+\frac{1}{4\left(\tan x\right)^{2}}\right)}\left(\frac{\pi}{2x}+\frac{k}{2^{1/3}}\left(\frac{\pi}{2x}\right)^{1/3}\right)^{2}\\
 & =\frac{x\tan x}{40\left(\frac{1}{4}+\left(\tan x\right)^{2}\right)}\left(\frac{\pi}{2x}+\frac{k}{2^{1/3}}\left(\frac{\pi}{2x}\right)^{1/3}\right)^{2}.
\end{align*}
Now note that
\begin{align*}
1\leq b\leq4 & \iff1\leq\frac{1}{2\tan x}\leq4\\
 & \iff2\leq\frac{1}{\tan x}\leq8\\
 & \iff\tan^{-1}\left(1/8\right)\leq x\leq\tan^{-1}\left(1/2\right).
\end{align*}
The following bounds can be obtained using a repeated application of $$\tan(x)=\int_{0}^{x}\left(1+\tan^{2}(t)dt\right)$$
and the fact that $\frac{d\tan x}{dx}=1+\tan^{2}(x)$:

\begin{align*}
\tan x & \geq x+\frac{x^{3}}{3}+\frac{2x^{5}}{15},0<x<\frac{\pi}{2} ,\\
\tan x & \leq x+\frac{x^{3}}{3}+\frac{2x^{5}}{5},0<x<1. 
\end{align*}
Hence
\begin{align*}
F\left(\triangle_{a,b}\right) & \geq\frac{x\left(x+\frac{x^{3}}{3}+\frac{2x^{5}}{15}\right)}{40\left(\frac{1}{4}+\left(x+\frac{x^{3}}{3}+\frac{2x^{5}}{5}\right)^{2}\right)}\left(\frac{\pi}{2x}+\frac{k}{2^{1/3}}\left(\frac{\pi}{2x}\right)^{1/3}\right)^{2}\\
 & =\frac{\pi^{2}}{24}\cdot\frac{3}{5}\frac{x\left(x+\frac{x^{3}}{3}+\frac{2x^{5}}{15}\right)}{\left(1+4\left(x+\frac{x^{3}}{3}+\frac{2x^{5}}{5}\right)^{2}\right)}\left(\frac{1}{x}+\frac{k2^{1/3}}{\pi^{2/3}}\frac{1}{x^{1/3}}\right)^{2}=: \frac{\pi^2}{24}f(x).
\end{align*}
This shows that for any $0\leq a\leq\frac{1}{2}$ and $1\leq b\leq4$
we have 
$
F\left(\triangle_{a,b}\right)>\frac{\pi^{2}}{24}\cdot f\left(x\right)
$.
To prove $F\left(\triangle_{a,b}\right)>\frac{\pi^{2}}{24}$ for
this range, it suffices to prove that the function $f(x)$ satisfies 
\[
f\left(x\right)\geq1,\text{ for }\tan^{-1}\left(\frac{1}{8}\right)\leq x\leq\tan^{-1}\left(1/2\right),
\]
which is done in the following Lemma. 
\end{proof}
\begin{lem}
\label{lem:technical-func-ineq-1}The function 
\[
f(x)=\frac{3}{5}\frac{x\left(x+\frac{x^{3}}{3}+\frac{2x^{5}}{15}\right)}{\left(1+4\left(x+\frac{x^{3}}{3}+\frac{2x^{5}}{5}\right)^{2}\right)}\left(\frac{1}{x}+\frac{k2^{1/3}}{\pi^{2/3}}\frac{1}{x^{1/3}}\right)^{2},
\]
satisfies $f(x)\geq1$,  for $\tan^{-1}\left(\frac{1}{8}\right)\leq x\leq\tan^{-1}\left(1/2\right)$. 
\end{lem}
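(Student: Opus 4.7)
The plan is to reduce the inequality $f(x)\geq 1$ to a one–variable polynomial inequality and then verify it by the method of Siudeja in \cite[Section 5]{Siudeja-IU-2010}. First, I simplify the expression for $f$ using the two identities
\[
x\left(x+\frac{x^{3}}{3}+\frac{2x^{5}}{15}\right)=x^{2}\left(1+\frac{x^{2}}{3}+\frac{2x^{4}}{15}\right),\qquad \left(\frac{1}{x}+\frac{c}{x^{1/3}}\right)^{\!2}=\frac{(1+c\,x^{2/3})^{2}}{x^{2}},
\]
where $c:=k\,2^{1/3}/\pi^{2/3}$. The factors of $x^{\pm 2}$ cancel, yielding the cleaner form
\[
f(x)=\frac{3}{5}\cdot\frac{\left(1+\frac{x^{2}}{3}+\frac{2x^{4}}{15}\right)(1+c\,x^{2/3})^{2}}{1+4\left(x+\frac{x^{3}}{3}+\frac{2x^{5}}{5}\right)^{2}}.
\]
Since $f$ is monotonically increasing in the parameter $c$, it suffices to prove the inequality with $c$ replaced by a rational lower bound $c_{0}\leq c$; e.g.\ $c_{0}=27/20$ works, verified by the rational inequality $2\cdot 46^{3}>27^{3}\pi^{2}$, which is equivalent to $c_{0}\leq k\,2^{1/3}/\pi^{2/3}$ for $k=2.3$.

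Second, I substitute $z:=x^{2/3}$, so that $x^{2}=z^{3}$, $x^{4}=z^{6}$, and $x^{2/3}=z$. The inequality $f(x)\geq 1$ then becomes the polynomial inequality $P(z)\geq 0$ on the interval $z\in[z_{-},z_{+}]$, where
\[
P(z):=3\left(1+\frac{z^{3}}{3}+\frac{2z^{6}}{15}\right)(1+c_{0}z)^{2}-5-20z^{3}\left(1+\frac{z^{3}}{3}+\frac{2z^{6}}{5}\right)^{2}
\]
is a polynomial of degree $15$ with rational coefficients and $z_{\pm}=(\tan^{-1}(1/8))^{2/3},(\tan^{-1}(1/2))^{2/3}$ respectively. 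Using the elementary bounds $t-t^{3}/3\leq \tan^{-1}(t)\leq t$ applied to $t=1/8$ and $t=1/2$ (together with an analogous elementary bracket for the $2/3$ power), I replace $[z_{-},z_{+}]$ by an explicit rational enclosing interval $[a,b]\supset[z_{-},z_{+}]$ and reduce to showing $P(z)\geq 0$ on $[a,b]$.

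Finally, following Siudeja's strategy, I translate by setting $w:=z-a$ and examine the expansion of $P(a+w)$ as a polynomial in $w$. If all of its coefficients are non-negative, then $P(z)\geq P(a)\geq 0$ on $[a,b]$ and the proof is complete after verifying $P(a)\geq 0$ in exact rational arithmetic. If some shifted coefficients are negative, one partitions $[a,b]$ into finitely many sub-intervals $[a_{i},a_{i+1}]$ chosen (guided by the positions of the roots of $P$) so that on each the shifted polynomial does have non-negative coefficients; each such verification is a pure rational-arithmetic check.

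The main obstacle is the tightness of the inequality near the endpoints: a numerical sanity check gives $f(\tan^{-1}(1/8))\approx 1.014$ and $f(\tan^{-1}(1/2))\approx 1.05$, with a modest bulge $\approx 1.15$ in the middle, so $P$ sits very close to $0$ at both ends of $[z_{-},z_{+}]$. Combined with $P(0)=-2<0$ and degree $15$, a single translation cannot succeed; a partition of $[a,b]$ into a small number of sub-intervals is unavoidable, and the leftmost base-point $a$ must be chosen with care so that $a\leq z_{-}$ while $P(a)\geq 0$ with enough margin to absorb the tight left end. Once these base-points are fixed, the remaining coefficient checks are routine.
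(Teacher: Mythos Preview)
Your overall strategy is the same as the paper's: simplify $f$, substitute to eliminate the fractional power $x^{2/3}$, clear denominators to reduce $f\ge 1$ to a polynomial inequality on an explicit interval, shift to an interval of the form $(0,L)$, and then invoke a Siudeja-type computer-assisted check. Your substitution $z=x^{2/3}$ is actually tidier than the paper's $x\mapsto x^{3}$: you obtain a degree-$15$ polynomial in $z$, whereas the paper's polynomial $P_{1}$ has degree $33$ but only odd powers, so after factoring $x^{3}$ it is really degree $15$ in $x^{2}$ --- and since the paper's new $x$ equals your $z^{1/2}$, the two polynomials coincide up to sign and a monomial factor. The rational lower bound $c_{0}=27/20\le k\,2^{1/3}/\pi^{2/3}$ is a nice touch the paper does not bother with.

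There is, however, a genuine gap in your description of the final verification step. You propose to shift to a base-point $a_{i}$, expand $P(a_{i}+w)=\sum_{j}c_{j}w^{j}$, and check that all $c_{j}\ge 0$, subdividing $[a,b]$ further if not. But by your own numerics $P$ is non-monotone on $[z_{-},z_{+}]$ (it rises then falls), and on any subinterval where $P'<0$ the coefficient $c_{1}=P'(a_{i})$ is negative --- no subdivision can repair this. (A toy example: $P(z)=1-z$ on $[0,\tfrac12]$ is strictly positive, yet $P(a+w)=\,(1-a)-w$ always has a negative linear coefficient.) Siudeja's actual algorithm, which the paper uses and you cite, is different: to show $Q\le 0$ on $(0,L)$ one iteratively replaces each term $a_{i}x^{i}$ by the larger $a_{i}Lx^{i-1}$ if $a_{i}>0$, or by the larger $a_{i}L^{-1}x^{i+1}$ if $a_{i}<0$, driving all coefficients to be non-positive; this mechanism handles non-monotone polynomials, and in the paper a single application after the shift $x\mapsto x+0.49$ succeeds with no subdivision. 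Your reduction is correct; just replace your ``non-negative shifted coefficients'' criterion with Siudeja's actual procedure (or any sound interval-arithmetic certificate), and the proof goes through.
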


\begin{proof}
We prove the inequality for $x\in\left(\tan^{-1}\left(\frac{1}{8}\right),\tan^{-1}\left(1/2\right)\right)\subset\left(0.12, 0.464\right)$.
Note that
\begin{align*}
f(x) & =\frac{3}{5}\frac{\left(x+\frac{x^{3}}{3}+\frac{2x^{5}}{15}\right)}{x\left(1+4\left(x+\frac{x^{3}}{3}+\frac{2x^{5}}{5}\right)^{2}\right)}\left(1+\frac{k2^{1/3}}{\pi^{2/3}}x^{\frac{2}{3}}\right)^{2}
\end{align*}
then use $x\mapsto x^{3}$ so that it suffices to show
\[
\frac{3}{5}\frac{\left(x^{3}+\frac{x^{9}}{3}+\frac{2x^{15}}{15}\right)}{x^{3}\left(1+4\left(x^{3}+\frac{x^{9}}{3}+\frac{2x^{15}}{5}\right)^{2}\right)}\left(1+\frac{k2^{1/3}}{\pi^{2/3}}x^{2}\right)^{2}\geq1, 
\]
for $x\in\left(\left(0.12\right)^{1/3},\left(0.464\right)^{1/3}\right)\subset\left(0.49, 0.775\right)$.
Rewriting this as a polynomial inequality, it suffices to show that 
\[
0\geq5x^{3}\left(1+4\left(x^{3}+\frac{x^{9}}{3}+\frac{2x^{15}}{5}\right)^{2}\right)-3\left(x^{3}+\frac{x^{9}}{3}+\frac{2x^{15}}{15}\right)\left(1+\frac{k2^{1/3}}{\pi^{2/3}}x^{2}\right)^{2}=:P_1\left(x\right). 
\]
Expanding with $k=\frac{23}{10}$ we have 
\begin{align*}
P_{1}(x) & =2x^{3}-\frac{69\cdot2^{1/3}}{5\pi^{2/3}}x^{5}-\frac{1587}{50\cdot2^{1/3}\pi^{4/3}}x^{7}+19x^{9}\\
 & -\frac{23\cdot2^{1/3}}{5\pi^{2/3}}x^{11}-\frac{529}{50\cdot2^{1/3}\pi^{4/3}}x^{13}+\frac{194}{15}x^{15}\\
 & -\frac{46\cdot2^{1/3}}{25\pi^{2/3}}x^{17}-\frac{529}{125\cdot2^{1/3}\pi^{4/3}}x^{19}\\
 & +\frac{164}{9}x^{21}+\frac{16}{3}x^{27}+\frac{16}{5}x^{33}. 
\end{align*}
Then using $x\mapsto\left(x+0.49\right)$, it suffices to show that the polynomial

\begin{equation}\label{polyineq-1}
P_{2}(x)=P_{1}(x+.49)
\end{equation}
satisfies $P_{2}\left(x\right)\leq0$, for $x\in\left(0, 0.285\right)$. We now use the Siudeja algorithm described in Section \ref{sec:Polynomial-Inequality} to show
$P_{2}\left(x\right)\leq0$,  as desired. This algorithm was introduced by Siudeja in \cite{Siudeja-IU-2010} and it allows us to show any polynomial is negative on an interval $(0,a)$ given that the interval is small enough. Using the algorithm in Section \ref{Sec:Lemma1} shows the desired inequality. 
\end{proof}
In the following, we consider acute and right triangles that are long and thin. These triangles are far from the equilateral triangle and approach the degenerating lower bound of $\frac{\pi^{2}}{24}$. We bound the torsional rigidity and principal eigenvalue using sectors with bounds given in Lemma \ref{lem:TorRigbound-1} and Lemma \ref{lem:Bartek-Lem1}. Afterwards, one of the key ideas will be to use monotonicity results given in Lemmas \ref{lem:Acute-High-1} and \ref{lem:Techinical-Lower-Mgeq3} to reduce to a lower bound for right triangles.

\begin{prop}[Case 2]
\label{prop:Lower-Acute-b-high}If $\left(a,b\right)\in\mathcal{T}_{acute}^{\prime}$ then
\[
\frac{\pi^{2}}{24}< F\left(\bigtriangleup_{a,b}\right),\text{ for }b\geq3.
\]
\end{prop}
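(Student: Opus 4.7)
The plan is to combine the lower bound for $T(\triangle_{a,b})$ coming from the sector comparison in Lemma \ref{lem:TorRigbound-1} with the Bessel--sector eigenvalue estimate of Lemma \ref{lem:Bartek-Lem1}, reduce the resulting two-parameter bound to a one-parameter estimate over right triangles by means of the monotonicity result Lemma \ref{lem:Acute-High-1}, and finally verify a scalar inequality in a single variable (either $\gamma$ or $M$) by the polynomial-positivity technique already used for Lemma \ref{lem:technical-func-ineq-1}.

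First I would check that Case~(1) of Lemma \ref{lem:TorRigbound-1} applies throughout the region. For $(a,b)\in\mathcal{T}'_{acute}$ with $b\ge 3$, we have $M=\sqrt{a^{2}+b^{2}}\ge b\ge 3\ge 2$, so the hypothesis $M\ge 2$ holds and the altitude satisfies $h\ge\sqrt{M^{2}-1/4}$. Combining this with $\lambda_{1}(\triangle_{a,b})\ge\frac{\gamma}{b}j_{\pi/\gamma}^{2}$ from Lemma \ref{lem:Bartek-Lem1} and the identity $|\triangle_{a,b}|=b/2$, one obtains
\begin{equation*}
F(\triangle_{a,b})\;\ge\;\frac{\gamma\,j_{\pi/\gamma}^{2}\,h^{4}}{8\,b^{2}}\left(\tan\gamma-\gamma-\frac{124\zeta(5)\gamma^{4}}{\pi^{5}}\right).
\end{equation*}
Since $\gamma$ is the smallest angle when $(a,b)\in\mathcal{T}'_{acute}$, it is small for $b\ge 3$; in particular the quantity in parentheses is positive and well-approximated by $\tfrac{\gamma^{3}}{3}+O(\gamma^{5})$.

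The next step is to eliminate one degree of freedom. For fixed angle $\gamma$ between the sides $M$ and $N$, the family of admissible triangles in $\mathcal{T}'_{acute}$ sweeps from the isosceles triangle (with $M=N$) to the right triangle (with $N^{2}=M^{2}+1$). I would invoke Lemma \ref{lem:Acute-High-1} to conclude that the right-hand side above, viewed as a function of the remaining parameter along this slice, attains its minimum precisely on the right-triangle boundary. This reduces the problem to proving $F>\pi^{2}/24$ for right triangles with $b\ge 3$. On this one-parameter curve the quantities simplify sharply: $b=M\cdot(1/N)$ type relations, $\tan\gamma=1/M$, and by Lemma \ref{lem:altitude-h}, $h=\frac{M}{\sqrt{2}}\sqrt{1+M/N}$ with $N=\sqrt{M^{2}+1}$.

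Finally, inserting these explicit right-triangle expressions into the displayed lower bound for $F$ and applying the Qu--Wong estimate $j_{\nu}\ge\nu-\frac{a_{1}}{2^{1/3}}\nu^{1/3}$ to bound $j_{\pi/\gamma}^{2}$ from below (as in Case~1b of Proposition \ref{prop:Lower-Acute-b-low}), the claim $F>\pi^{2}/24$ becomes a scalar inequality for $\gamma\in(0,\gamma_{0}]$ with $\gamma_{0}=2\tan^{-1}(1/6)$ (the largest angle consistent with $b\ge 3$ on right triangles). After the substitution $\gamma\mapsto x$ and then $x\mapsto x^{3}$ to clear fractional powers, and a translation $x\mapsto x+x_{0}$ to move the interval to $(0,\delta)$, the inequality takes the form $P(x)\le 0$ for a polynomial $P$ on a short interval, which can be handled by the Siudeja algorithm of Section \ref{sec:Polynomial-Inequality} exactly as in the proof of Lemma \ref{lem:technical-func-ineq-1}.

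The principal obstacle I anticipate is the monotonicity step invoking Lemma \ref{lem:Acute-High-1}: the reduction from $\mathcal{T}'_{acute}$ to right triangles must be carried out on an inequality that is already a composite of several comparison estimates, and one has to check that the dependence of the lower-bound expression on the remaining parameter is indeed monotone in the required direction. Once that reduction is in place, the residual one-variable inequality is tight but routine to verify by polynomial positivity on an explicit small interval.
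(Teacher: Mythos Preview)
Your overall strategy matches the paper's: combine Lemma~\ref{lem:TorRigbound-1} with Lemma~\ref{lem:Bartek-Lem1} and the Qu--Wong bound, reduce the two-parameter estimate to right triangles via monotonicity, and then verify a one-variable inequality. However, the reduction step as you describe it is not the one that works, and the final step is subtler than you indicate.

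The reduction in the paper is carried out along fixed-$b$ slices, not fixed-$\gamma$ slices. For fixed $b$ and $0\le a\le 1/2$, both $h$ and $\gamma$ are minimized at the right triangle $a=0$: the bound $h\ge h(\triangle_{0,b})=\frac{b}{\sqrt{2}}\sqrt{1+b/\sqrt{b^{2}+1}}$ comes from Lemma~\ref{lem:altitude-h} (a separate geometric input you do not mention), while $\gamma\ge\gamma(\triangle_{0,b})=\tan^{-1}(1/b)$ is elementary. Lemma~\ref{lem:Acute-High-1} states that the composite $\gamma$-factor
\[
\gamma\longmapsto\gamma\Bigl(\tfrac{\pi}{\gamma}+\tfrac{k}{2^{1/3}}\bigl(\tfrac{\pi}{\gamma}\bigr)^{1/3}\Bigr)^{2}\Bigl(\tan\gamma-\gamma-\tfrac{124\zeta(5)\gamma^{4}}{\pi^{5}}\Bigr)
\]
is increasing on $(0,0.7)$, so replacing $\gamma$ by $\tan^{-1}(1/b)$ only lowers the bound. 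If instead you fix $\gamma$, Lemma~\ref{lem:Acute-High-1} tells you nothing, since this factor is constant along such a slice; the monotonicity you would then need is in $h^{4}/b^{2}$, which is a different statement not supplied by that lemma. Your own caveat at the end anticipates exactly this issue.

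For the final step, the resulting one-variable inequality in $b$ (equivalently in $\gamma_{b}=\tan^{-1}(1/b)$) must hold on the full range $b\ge 3$, i.e.\ all the way down to $\gamma_{b}\to 0$, where the bound is asymptotically sharp. There is no translation to a short interval; rather, this is handled by the separate Lemma~\ref{lem:Techinical-Lower-Mgeq3}, which first applies elementary bounds on $\cos$ and $\tan$ and the substitution $x\mapsto x^{3}$ to obtain a polynomial inequality $Q(x)\le 0$ on $(0,0.686)$ with $Q(0)=0$, and only then invokes the Siudeja algorithm. (Incidentally, $2\tan^{-1}(1/6)$ is the apex angle of the \emph{isosceles} triangle at $b=3$; after reducing to right triangles the relevant endpoint is $\tan^{-1}(1/3)$.)
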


\begin{proof}
To estimate $T\left(\triangle_{a,b}\right)=T\left(\triangle^{M,N}\right)$
we will use Lemma \ref{lem:TorRigbound-1}. As in Lemma \ref{lem:TorRigbound-1}, let $h$ be the altitude between the isosceles triangle $T_{iso}$
of length $M,M,1$ and angle $\gamma=\gamma\left(\triangle_{a,b}\right)$
between the two side lengths $M$. 

Since $M=\sqrt{a^{2}+b^{2}}\geq3$ , by Lemma \ref{lem:TorRigbound-1} we have that
\[
T\left(\triangle_{a,b}\right)\geq\frac{h^{4}}{16}\left(\tan\gamma-\gamma-\frac{124\zeta\left(5\right)\gamma^{4}}{\pi^{5}}\right),
\]
where $\gamma=\gamma\left(\triangle_{a,b}\right)$. Recall that by
Lemma \ref{lem:Bartek-Lem1} we have $\lambda_{1}\left(\triangle_{a,b}\right)\geq\frac{\gamma}{b}j_{\pi/\gamma}^{2}$
and that 
$$j_{\pi/\gamma}^{2}>\left(\frac{\pi}{\gamma}+\frac{k}{2^{1/3}}\left(\frac{\pi}{\gamma}\right)^{1/3}\right)^{2},$$ where
$k=2.338107$. Thus  
\[
\lambda_{1}\left(\triangle_{a,b}\right)\geq\frac{\gamma}{b}\left(\frac{\pi}{\gamma}+\frac{k}{2^{1/3}}\left(\frac{\pi}{\gamma}\right)^{1/3}\right)^{2}.
\]
Putting these bounds together we have that 
\begin{align}
  F\left(\triangle_{a,b}\right)& >\frac{h^{4}}{16}\left(\tan\gamma-\gamma-\frac{124\zeta\left(5\right)\gamma^{4}}{\pi^{5}}\right)\frac{\gamma}{b}\left(\frac{\pi}{\gamma}+\frac{k}{2^{1/3}}\left(\frac{\pi}{\gamma}\right)^{1/3}\right)^{2}\frac{1}{b/2}.\label{prop:Lower-Acute-b-high-0}
\end{align}
Note that given a fixed $b$, we have that for any $0\leq a \leq 1/2$,  
\[
h\left(\text{right triangle}\right)=h\left(\triangle_{0,b}\right)\leq h\left(\triangle_{a,b}\right)\leq h\left(\triangle_{\frac{1}{2},b}\right)=h\left(\text{isosceles}\text{ triangle}\right). 
\]
Hence by Lemma \ref{lem:altitude-h}
\begin{equation}
h\left(\triangle_{a,b}\right)\geq h\left(\triangle_{0,b}\right)=\frac{b}{\sqrt{2}}\sqrt{1+\frac{b}{\sqrt{b^{2}+1}}}.\label{prop:Lower-Acute-b-high-1}
\end{equation}
Using $(\ref{prop:Lower-Acute-b-high-1})$ in $(\ref{prop:Lower-Acute-b-high-0})$
we have that 
\begin{align}
 & F\left(\triangle_{a,b}\right)\nonumber \\
 & \geq\frac{\left(\frac{b}{\sqrt{2}}\sqrt{1+\frac{b}{\sqrt{b^{2}+1}}}\right)^{4}}{16}\left(\tan\gamma-\gamma-\frac{124\zeta\left(5\right)\gamma^{4}}{\pi^{5}}\right)\frac{\gamma}{b}\left(\frac{\pi}{\gamma}+\frac{k}{2^{1/3}}\left(\frac{\pi}{\gamma}\right)^{1/3}\right)^{2}\frac{1}{b/2}\label{prop:Lower-Acute-b-high-1-2}
\end{align}

Note that for a given $a$ and fixed $b$, we have for any $0\leq a \leq 1/2$, 
\[
\gamma\left(\text{right triangle}\right)=\gamma\left(\triangle_{0,b}\right)\leq\gamma\left(\triangle_{a,b}\right)\leq\gamma\left(\triangle_{\frac{1}{2},b}\right)=\gamma\left(\text{isosceles triangle}\right).
\]
This gives  
\begin{equation}
\gamma\left(\triangle_{a,b}\right)\geq\gamma\left(\triangle_{0,b}\right)=\tan^{-1}\left(\frac{1}{b}\right).\label{prop:Lower-Acute-b-high-2}
\end{equation}
Also recall that $\gamma\left(\triangle_{\frac{1}{2},b}\right)=2\tan^{-1}\left(\frac{1}{2b}\right)$
so that $$\gamma\in\left(\tan^{-1}\left(\frac{1}{b}\right),2\tan^{-1}\left(\frac{1}{2b}\right)\right).$$
Since $\tan^{-1}\left(\frac{1}{b}\right),2\tan^{-1}\left(\frac{1}{2b}\right)$
are decreasing in $b$, for all of $b \geq 3$,  we have $$\gamma\in\left(0,2\tan^{-1}\left(\frac{1}{6}\right)\right)\subset\left(0, 0.34\right).$$ Using $(\ref{prop:Lower-Acute-b-high-2})$ in $(\ref{prop:Lower-Acute-b-high-1-2})$
and the fact that the function
\[
\gamma\mapsto\left(\tan\gamma-\gamma-\frac{124\zeta\left(5\right)\gamma^{4}}{\pi^{5}}\right)\gamma\left(\frac{\pi}{\gamma}+\frac{k}{2^{1/3}}\left(\frac{\pi}{\gamma}\right)^{1/3}\right)^{2}
\]
is increasing for $\gamma\in\left(0,0.7\right),$ Lemma \ref{lem:Acute-High-1} gives that 
\begin{align*}
 & F\left(\triangle_{a,b}\right)\\
 & >\frac{\pi^{2}}{24}\cdot\frac{3}{4}b^{2}\left(1+\frac{b}{\sqrt{b^{2}+1}}\right)^{2}\frac{1}{\gamma_{b}}\left(1+\frac{k}{2^{1/3}}\frac{\gamma_{b}^{2/3}}{\pi^{2/3}}\right)^{2}\left(\tan\gamma_{b}-\gamma_{b}-\frac{124\zeta\left(5\right)\gamma_{b}^{4}}{\pi^{5}}\right),
\end{align*}
where $\gamma_{b}:=\tan^{-1}\left(\frac{1}{b}\right)$ 

By Lemma \ref{lem:Techinical-Lower-Mgeq3} we know the function 
\[
f\left(b\right)=\frac{3}{4}b^{2}\left(1+\frac{b}{\sqrt{b^{2}+1}}\right)^{2}\frac{1}{\gamma_{b}}\left(1+\frac{k}{2^{1/3}}\frac{\gamma_{b}^{2/3}}{\pi^{2/3}}\right)^{2}\left(\tan\gamma_{b}-\gamma_{b}-\frac{124\zeta\left(5\right)\gamma_{b}^{4}}{\pi^{5}}\right)
\]
satisfies $f\left(b\right)\geq1$,  for $b\geq3.$ This gives  the desired
result. 
\end{proof}
\begin{lem}
\label{lem:Acute-High-1}The function 
\[
f\left(x\right)=\left(\tan x-x-\frac{124\zeta\left(5\right)x^{4}}{\pi^{5}}\right)x\left(\frac{\pi}{x}+\frac{c_{1}}{2^{1/3}}\left(\frac{\pi}{x}\right)^{1/3}\right)^{2},
\]
where $c_{1}=2.338107$, is increasing in the interval $\left(0,0.7\right)$. 
\end{lem}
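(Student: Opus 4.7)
The plan is to show $f'(x) \geq 0$ on $(0, 0.7)$ by reducing to a polynomial inequality verifiable via the Siudeja algorithm employed in Lemma \ref{lem:technical-func-ineq-1}. Write $f = A \cdot B$ with $A(x) = \tan x - x - \frac{124\zeta(5)}{\pi^{5}} x^{4}$ and, after expanding the square,
\[
B(x) = \frac{\pi^{2}}{x} + \frac{2 c_{1} \pi^{4/3}}{2^{1/3}\, x^{1/3}} + \frac{c_{1}^{2} \pi^{2/3}}{2^{2/3}}\, x^{1/3}.
\]
Both $A$ and $B$ are positive on $(0, 0.7)$ (the positivity of $A$ is exactly the content that makes the bound in Lemma \ref{lem:TorRigbound-1} meaningful), so the sign of $f'$ is the sign of the logarithmic derivative $A'/A + B'/B$, where $A'(x) = \tan^{2} x - \frac{496\zeta(5)}{\pi^{5}} x^{3}$ and $B'$ is read off termwise. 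The heuristic for why the inequality holds is clear: as $x \to 0^{+}$ one has $A(x) \sim x^{3}/3$ and $B(x) \sim \pi^{2}/x$, so $A'/A \sim 3/x$, $B'/B \sim -1/x$, and $f'/f \sim 2/x > 0$, with the difference $A'/A + B'/B$ remaining positive throughout the interval.

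To make this rigorous on all of $(0, 0.7)$, substitute $x = y^{3}$, so that $y \in (0, 0.7^{1/3}) \subset (0, 0.89)$; this eliminates the fractional powers of $x$ appearing in $B$ and $B'$. Multiply the inequality $f'(x) \geq 0$ through by a suitable positive monomial in $y$ to clear the remaining denominators, leaving an expression involving only $\tan(y^{3})$ and $\tan^{2}(y^{3})$. Replacing $\tan t$ by the two-sided Taylor bounds
\[
t + \tfrac{t^{3}}{3} + \tfrac{2t^{5}}{15} \;\leq\; \tan t \;\leq\; t + \tfrac{t^{3}}{3} + \tfrac{2t^{5}}{5}, \qquad 0 < t < 1,
\]
already used in Lemma \ref{lem:technical-func-ineq-1}, together with the corresponding bound on $\tan^{2} t$ obtained by squaring the appropriate one-sided estimate, reduces the claim to a polynomial inequality $P(y) \geq 0$ on $(0, 0.89)$.

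The polynomial inequality is then handled by the shift-and-check Siudeja algorithm of Section \ref{sec:Polynomial-Inequality}, exactly as in \eqref{polyineq-1}: translate $y \mapsto y + y_{0}$ for an appropriate $y_{0}$ and verify that every coefficient of the shifted polynomial has the required sign. The main obstacle is controlling the Taylor-truncation error near the upper endpoint $x = 0.7$, where $\frac{124\zeta(5)}{\pi^{5}} x^{4}$ becomes a substantial fraction of $\tan x - x$ and the two factors $A'B$ and $-AB'$ are genuinely competing; in practice one may need to retain additional Taylor terms for $\tan$ (inflating the degree of $P$) or to partition $(0, 0.7)$ into two or three subintervals before the coefficient-sign check succeeds. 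The resulting computation is mechanical but tedious, mirroring the proof of Lemma \ref{lem:technical-func-ineq-1}.
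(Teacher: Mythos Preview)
Your strategy is sound and would work, but it differs from the paper's in one key structural way. After the same substitution $x\mapsto y^3$, the paper does \emph{not} differentiate $f$ and then bound $\tan$ and $\tan^2$ by two-sided Taylor polynomials. Instead it writes
\[
f_1(y)=\bigl(\tan y^{3}-y^{3}-c_2 y^{12}\bigr)\,\frac{1}{y^{3}}\,\Bigl(\pi+\tfrac{c_1\pi^{1/3}}{2^{1/3}}y^{2}\Bigr)^{2},
\]
and uses the fact that \emph{all} Taylor coefficients of $\tan t - t$ are strictly positive. This lets one expand $\tan y^{3}$ to any finite order and know that the remainder $R_3(y)$ is itself a convergent power series with nonnegative coefficients. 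Multiplying $R_3$ by the square factor (also nonnegative coefficients) gives a tail $P_2(y)$ that is automatically increasing, so no truncation error enters at all. The only thing left to check is that the explicit degree-$22$ polynomial $P_1(y)$ carrying the finitely many terms (including the single negative $c_2$ contribution) is increasing, which is a one-shot Siudeja verification on $-P_1'$.

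Your route, by contrast, produces an expression of the form $\text{poly}_1(y)\tan^2(y^3)+\text{poly}_2(y)\tan(y^3)+\text{poly}_3(y)$ and then sandwiches $\tan$ between two polynomial approximants. This is legitimate, but it forces you to track the signs of $\text{poly}_1,\text{poly}_2$ to know which bound to apply (you correctly note $B>0$ and $B'<0$, so this is manageable), and it leaves a genuine truncation error that---as you yourself flag---may require extra Taylor terms or interval subdivision near $x=0.7$. The paper's nonnegativity-of-coefficients trick eliminates exactly that obstacle; it is worth knowing for similar problems.
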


\begin{proof}
Making the substitution $x\mapsto x^{3}$,
it suffices to show 
\begin{align*}
f_{1}\left(x\right) & =\left(\tan x^{3}-x^{3}-\frac{124\zeta\left(5\right)x^{12}}{\pi^{5}}\right)x^{3}\left(\frac{\pi}{x^{3}}+\frac{c_{1}}{2^{1/3}}\frac{\pi^{1/3}}{x}\right)^{2}\\
 & =\left(\tan x^{3}-x^{3}-\frac{124\zeta\left(5\right)x^{12}}{\pi^{5}}\right)\frac{1}{x^{3}}\left(\pi+\frac{c_{1}\pi^{1/3}}{2^{1/3}}x^{2}\right)^{2}
\end{align*}
is increasing for $x\in\left(0,\left(0.7\right)^{1/3}\right)\approx\left(0, 0.8879\right)$. 

We know that 
\[
\tan x=x+\frac{x^{3}}{3}+\frac{2}{15}x^{5}+\frac{17}{315}x^{7}+R_{1}\left(x\right),
\]
where the remainder term $R_{1}\left(x\right)=\sum_{i=9}^{\infty}a_{i}x^{i}$
satisfies $a_{i}=0$, when $i$ is even and $a_{i}>0$, when $i$ is
odd and it converges on $\left|x\right|<\frac{\pi}{2}=1.5$. Hence
\[
\tan x^{3}=x^{3}+\frac{x^{9}}{3}+\frac{2}{15}x^{15}+\frac{17}{315}x^{21}+R_{2}\left(x\right),
\]
where the remainder term $R_{2}\left(x\right)=\sum_{i=27}^{\infty}a_{i}x^{i}$
$a_{i}\geq0$ when $i$ is odd which converges on $\left|x\right|<\left(\frac{\pi}{2}\right)^{1/3}\approx1.16$. 
Hence 
\begin{align*}
f_{1}\left(x\right) & =\left(\frac{x^{9}}{3}+\frac{2}{15}x^{15}+\frac{17}{315}x^{21}+R_{2}\left(x\right)-\frac{124\zeta\left(5\right)x^{12}}{\pi^{5}}\right)\frac{1}{x^{3}}\left(\pi+\frac{c_{1}\pi^{1/3}}{2^{1/3}}x^{2}\right)^{2}\\
 & =\left(\frac{x^{6}}{3}+\frac{2}{15}x^{12}+\frac{17}{315}x^{18}+R_{3}\left(x\right)-\frac{124\zeta\left(5\right)x^{9}}{\pi^{5}}\right)\left(\pi+\frac{c_{1}\pi^{1/3}}{2^{1/3}}x^{2}\right)^{2}\\
 & =\left(\frac{x^{6}}{3}+c_{2}x^{9}+\frac{2}{15}x^{12}+\frac{17}{315}x^{18}+R_{3}\left(x\right)\right)\left(\pi+\frac{c_{1}\pi^{1/3}}{2^{1/3}}x^{2}\right)^{2}, 
\end{align*}
where $R_{3}(x)=\sum_{i=24}^{\infty}a_{i}x^{i}$ and $c_{2}=-\frac{124\zeta\left(5\right)}{\pi^{5}}<0$.
Expanding out we have 
\[
f_{1}(x)=P_{1}(x)+P_{2}(x)
\]
where $$P_{2}(x)=R_{3}\left(x\right)\left(\pi^{2}+2^{2/3}c_{1}\pi^{4/3}x^{2}+c_{1}^{2}\left(\frac{\pi}{2}\right)^{2/3}x^{4}\right)$$ and
\begin{align}
\nonumber P_{1}(x) & =\frac{\pi^{2}}{3}x^{6}+\frac{2^{1/3}c_{1}}{3}\pi^{4/3}x^{8}+c_{2}\pi^{2}x^{9}\\
\nonumber & +\frac{c_{1}^{2}}{3}\left(\frac{\pi}{2}\right)^{2/3}x^{10}+2^{2/3}c_{1}c_{2}\pi^{4/3}x^{11}+\frac{2\pi^{2}}{15}x^{12}\\
\nonumber & +c_{1}^{2}c_{2}\left(\frac{\pi}{2}\right)^{2/3}x^{13}+\frac{2}{15}2^{1/3}c_{1}\pi^{4/3}x^{14}+\frac{2^{1/3}c_{1}^{2}\pi^{2/3}}{15}x^{16}\\
\nonumber & +\frac{17\pi^{2}}{315}x^{18}+\frac{17}{315}2^{2/3}c_{1}\pi^{4/3}x^{20}\\
 & +\frac{17}{315}c_{1}^{2}\left(\frac{\pi}{2}\right)^{2/3}x^{22}.\label{polyineq-2}
\end{align}
The polynomial $P_{1}(x)$ is certainly increasing for $x\in\left(0, 0.888\right),$ again applying 
Siudeja's  algorithm (see Section \ref{Sec:Lemma2})  to  $P_{1}^{\prime}(x)$ to show that $-P_{1}^{\prime}(x)\leq0$. Moreover, the polynomial $$P_{2}(x)=R_{3}\left(x\right)\left(\pi^{2}+2^{2/3}c_{1}\pi^{4/3}x^{2}+c_{1}^{2}\left(\frac{\pi}{2}\right)^{2/3}x^{4}\right)$$
has positive powers of $x$ with positive coefficients which means
$P_{2}(x)$ is increasing. This shows $f_{1}(x)$ is increasing on the desired interval as needed.  
\end{proof}
\begin{lem}
\label{lem:Techinical-Lower-Mgeq3} The function
\[
f\left(b\right)=\frac{3}{4}\frac{b^{2}}{\gamma_{b}}\left(1+\frac{b}{\sqrt{b^{2}+1}}\right)^{2}\left(1+\frac{c_{1}}{2^{1/3}}\frac{\gamma_{b}^{2/3}}{\pi^{2/3}}\right)^{2}\left(\frac{1}{b}-\gamma_{b}-\frac{124\zeta\left(5\right)\gamma_{b}^{4}}{\pi^{5}}\right),
\]
where $\gamma_{b}=\tan^{-1}\left(\frac{1}{b}\right)$ and $c_{1}=2.338107$
satisfies $f\left(b\right)\geq1$ for all $b\geq3$. 
\end{lem}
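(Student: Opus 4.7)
The plan is to reduce the stated inequality to a polynomial inequality in a single variable on a compact interval and then invoke the Siudeja algorithm of Section \ref{sec:Polynomial-Inequality}, exactly in the style of Lemmas \ref{lem:technical-func-ineq-1} and \ref{lem:Acute-High-1}. The first step is a trigonometric substitution: setting $\gamma=\gamma_b=\arctan(1/b)$, so that $b=\cot\gamma$, the range $b\ge 3$ corresponds to $\gamma\in(0,\gamma_*]$ with $\gamma_*=\arctan(1/3)<0.322$. Since $\sqrt{b^2+1}=1/\sin\gamma$, one has $b/\sqrt{b^2+1}=\cos\gamma$ and $1/b=\tan\gamma$, so the inequality $f(b)\ge 1$ is equivalent to
\[
g(\gamma):=\frac{3(1+\cos\gamma)^2}{4\gamma\tan^2\gamma}\left(1+\frac{c_1}{(2\pi^2)^{1/3}}\gamma^{2/3}\right)^2\left(\tan\gamma-\gamma-\frac{124\zeta(5)\gamma^4}{\pi^5}\right)\ge 1
\]
for $\gamma\in(0,\gamma_*]$. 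A quick check using $\tan\gamma-\gamma=\gamma^3/3+O(\gamma^5)$ and $\gamma\tan^2\gamma=\gamma^3+O(\gamma^5)$ shows $g(\gamma)\to 1$ as $\gamma\to 0^+$; this is both a useful sanity check and the reason the sharp constant $\pi^2/24$ is saturated in the thin-triangle limit.

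Next I would clear the denominator by writing $\tan^2\gamma=\sin^2\gamma/\cos^2\gamma$, reducing the claim to
\[
3(1+\cos\gamma)^2\cos^2\gamma\left(1+\frac{c_1}{(2\pi^2)^{1/3}}\gamma^{2/3}\right)^2\left(\tan\gamma-\gamma-\frac{124\zeta(5)\gamma^4}{\pi^5}\right)\ge 4\gamma\sin^2\gamma,
\]
and then eliminate the fractional power by the substitution $\gamma=t^3$, mimicking the tactic used in Lemmas \ref{lem:technical-func-ineq-1} and \ref{lem:Acute-High-1}. This leaves a one-variable problem on $t\in(0,t_0]$ with $t_0=\gamma_*^{1/3}<0.685$, in which every factor is an analytic power series in $t$.

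The third step replaces $\tan(t^3)$, $\sin(t^3)$, and $\cos(t^3)$ by finite Taylor polynomials equipped with explicit one-sided remainder bounds on $[0,t_0]$. Because $\tan\gamma-\gamma=\sum_{k\ge 1}a_k\gamma^{2k+1}$ has all positive coefficients, truncating below gives a lower bound for the third factor; the alternating series for $\sin^2(t^3)$, $\cos^2(t^3)$, and $(1+\cos(t^3))^2$ can each be truncated in either direction with a controllable error on $[0,t_0]$. Choosing the truncations so as to replace the inequality by a strictly stronger polynomial inequality $P(t)\ge 0$, then shifting via $t\mapsto t_0-t$ and running the algorithm of Section \ref{sec:Polynomial-Inequality} on $-P(t_0-t)$, reduces everything to checking the signs of a finite list of explicit numerical coefficients, just as was done in \eqref{polyineq-1} and \eqref{polyineq-2}.

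The main obstacle is computational rather than conceptual: after multiplying three Taylor series, a quadratic in $t^2$, and the $\gamma^4$ correction, one produces a rather high-degree polynomial with messy coefficients involving $c_1$, $\zeta(5)$, and fractional powers of $\pi$. The delicate points will be to keep every Taylor truncation oriented in the direction favorable to the inequality, and to carry enough terms that the residual polynomial is genuinely non-negative on $[0,t_0]$ rather than being dominated by truncation error near $t=t_0$, which is precisely where $g(\gamma)$ departs most from its limiting value of $1$.
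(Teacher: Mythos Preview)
Your proposal is correct and follows essentially the same route as the paper's proof: the trigonometric substitution $\gamma=\arctan(1/b)$, one-sided Taylor bounds on the trigonometric factors, the cube substitution $\gamma\mapsto t^{3}$ to remove the $\gamma^{2/3}$, and finally reduction to a single-variable polynomial inequality handled by the Siudeja algorithm. The paper's implementation is slightly leaner---it uses $(1+\cos\gamma)^2=4\cos^4(\gamma/2)$ and bounds $\cos(\gamma/2)\ge 1-\gamma^2/8$ and $\tan\gamma$ directly (rather than splitting into $\sin^2,\cos^2$), and applies the algorithm on $(0,0.686)$ without a shift---but the strategy is the same.
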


\begin{proof}
We make the substitution $x=\tan^{-1}\left(\frac{1}{b}\right)$ so
that $b=\frac{1}{\tan x}$ hence we define the function $h$ for $0\leq x\leq\tan^{-1}\left(1/3\right)\approx 0.32$ by 
\begin{align*}
h\left(x\right) & =\frac{3}{4}\frac{1}{x\left(\tan x\right)^{2}}\left(1+\frac{1}{\tan x\sqrt{\frac{1}{\left(\tan x\right)^{2}}+1}}\right)^{2}\left(1+\frac{c_{1}}{2^{1/3}\pi^{2/3}}x^{2/3}\right)^{2}\left(\tan x-x-\frac{124\zeta\left(5\right)}{\pi^{5}}x^{4}\right)\\
 & =\frac{3}{4}\frac{1}{x\left(\tan x\right)^{2}}\left(1+\frac{1}{\sqrt{1+\left(\tan x\right)^{2}}}\right)^{2}\left(1+\frac{c_{1}}{2^{1/3}\pi^{2/3}}x^{2/3}\right)^{2}\left(\tan x-x-\frac{124\zeta\left(5\right)}{\pi^{5}}x^{4}\right)\\
 & =\frac{3}{4}\frac{1}{x\left(\tan x\right)^{2}}\left(1+\frac{1}{\left|\sec\theta\right|}\right)^{2}\left(1+\frac{c_{1}}{2^{1/3}\pi^{2/3}}x^{2/3}\right)^{2}\left(\tan x-x-\frac{124\zeta\left(5\right)}{\pi^{5}}x^{4}\right)\\
 & =\frac{3}{4}\frac{\left(1+\cos(x)\right)^{2}}{x\left(\tan x\right)^{2}}\left(1+\frac{c_{1}}{2^{1/3}\pi^{2/3}}x^{2/3}\right)^{2}\left(\tan x-x-\frac{124\zeta\left(5\right)}{\pi^{5}}x^{4}\right).
\end{align*}
Now since $$\left(1+\cos(x)\right)^{2}=4\cos\left(\frac{x}{2}\right)^{4}$$
we have
\[
h\left(x\right)=3\frac{\cos\left(\frac{x}{2}\right)^{4}}{x\left(\tan x\right)^{2}}\left(1+\frac{c_{1}}{2^{1/3}\pi^{2/3}}x^{2/3}\right)^{2}\left(\tan x-x-\frac{124\zeta\left(5\right)}{\pi^{5}}x^{4}\right).
\]

Using the elementary bounds  $1-\frac{x^{2}}{2}\leq\cos(x)$,  for $x\leq2$ so that $1-\frac{x^{2}}{8}\leq\cos\left(\frac{x}{2}\right)$ for $x\leq4$ 
and 
\begin{align*}
\tan x & \geq x+\frac{x^{3}}{3},0<x<\frac{\pi}{2},\\
\tan x & \leq x+\frac{x^{3}}{3}+\frac{2x^{5}}{5},0<x<1, 
\end{align*}
we have that $$\frac{1}{\left(\tan x\right)^{2}}\geq\frac{1}{\left(x+\frac{x^{3}}{3}+\frac{2x^{5}}{5}\right)^{2}}.$$
Thus, 
\begin{align*}
h\left(x\right) & \geq3\frac{\left(1-\frac{x^{2}}{8}\right)^{4}}{x\left(x+\frac{x^{3}}{3}+\frac{2x^{5}}{5}\right)^{2}}\left(1+\frac{c_{1}}{2^{1/3}\pi^{2/3}}x^{2/3}\right)^{2}\left(x+\frac{x^{3}}{3}-x-\frac{124\zeta\left(5\right)}{\pi^{5}}x^{4}\right)\\
 & =3\frac{\left(1-\frac{x^{2}}{8}\right)^{4}}{\left(x+\frac{x^{3}}{3}+\frac{2x^{5}}{5}\right)^{2}}\left(1+\frac{c_{1}}{2^{1/3}\pi^{2/3}}x^{2/3}\right)^{2}\left(\frac{x^{2}}{3}-\frac{124\zeta\left(5\right)}{\pi^{5}}x^{3}\right)\\
 & =\frac{x^{2}\left(1-\frac{x^{2}}{8}\right)^{4}}{\left(x+\frac{x^{3}}{3}+\frac{2x^{5}}{5}\right)^{2}}\left(1+\frac{c_{1}}{2^{1/3}\pi^{2/3}}x^{2/3}\right)^{2}\left(1-\frac{3\cdot124\zeta\left(5\right)}{\pi^{5}}x\right)\\
 & =\frac{\left(1-\frac{x^{2}}{8}\right)^{4}}{\left(1+\frac{x^{2}}{3}+\frac{2x^{4}}{5}\right)^{2}}\left(1+\frac{c_1}{2^{1/3}\pi^{2/3}}x^{2/3}\right)^{2}\left(1-\frac{3\cdot124\zeta\left(5\right)}{\pi^{5}}x\right):=g(x).
\end{align*}

It suffices to show $g(x)\geq1$ for when $0\leq x\leq\tan^{-1}\left(1/3\right)\approx 0.32$,
since $b\geq 3$. To do this we consider the polynomial
\begin{align*}
P(x)= & \left(1+\frac{x^{2}}{3}+\frac{2x^{4}}{5}\right)^{2}-\left(1-\frac{x^{2}}{8}\right)^{4}\left(1+\frac{c_1}{2^{1/3}\pi^{2/3}}x^{2/3}\right)^{2}\left(1-\frac{3\cdot124\zeta\left(5\right)}{\pi^{5}}x\right). 
\end{align*}
We want to show that this polynomial satisfies $P(x)\leq0$ on $x\in\left(0,\tan^{-1}\left(\frac{1}{3}\right)\right)$.
Making  the substitution $x\mapsto x^{3}$ gives

\begin{equation}
Q(x)=\left(1+\frac{x^{6}}{3}+\frac{2x^{12}}{5}\right)^{2}-\left(1-\frac{x^{6}}{8}\right)^{4}\left(1+\frac{c_1}{2^{1/3}\pi^{2/3}}x^{2}\right)^{2}\left(1-\frac{3\cdot124\zeta\left(5\right)}{\pi^{5}}x^{3}\right)\label{polyineq-3}.
\end{equation}
We can show $Q$ is negative on $x\in\left(0,\left(\tan^{-1}\left(\frac{1}{3}\right)\right)^{1/3}\right)\subset\left(0,.686\right)$
by applying Siudeja's algorithm in Section \ref{Sec:Lemma3}. 
\end{proof}

\section{\label{sec:Lower-Triangle-Obtuse}Proof of Theorem \ref{thm:MainResult}:
Lower Bound for  Obtuse and Right Triangles}

Consider a triangle $\triangle_{a,b}$ with vertices $\left(0,0\right)$, $\left(1,0\right)$, and $\left(a,b\right)$
with sides of length $1,M=\sqrt{a^{2}+b^{2}}$ and $N=\sqrt{\left(a-1\right)^{2}+b^{2}}$.
Recall that by the discussion in Section \ref{sec:Triangle-Set-up},
to prove the desired bounds for all obtuse triangles we will use the following characterization
\[
\mathcal{T}_{obtuse}=\left\{ \left(a,b\right)\in\left[0,\frac{1}{2}\right]\times\left[0,\frac{1}{2}\right]\mid\left(a-\frac{1}{2}\right)^{2}+b^{2}\leq\frac{1}{4}\right\} .
\]
We will split the proof into three main cases. See Figure \ref{Obtuse-LowerBound-v2} for a picture of the regions for $(a,b)$.

\begin{figure}[h]
    \centering
    \includegraphics[width=0.4\textwidth]{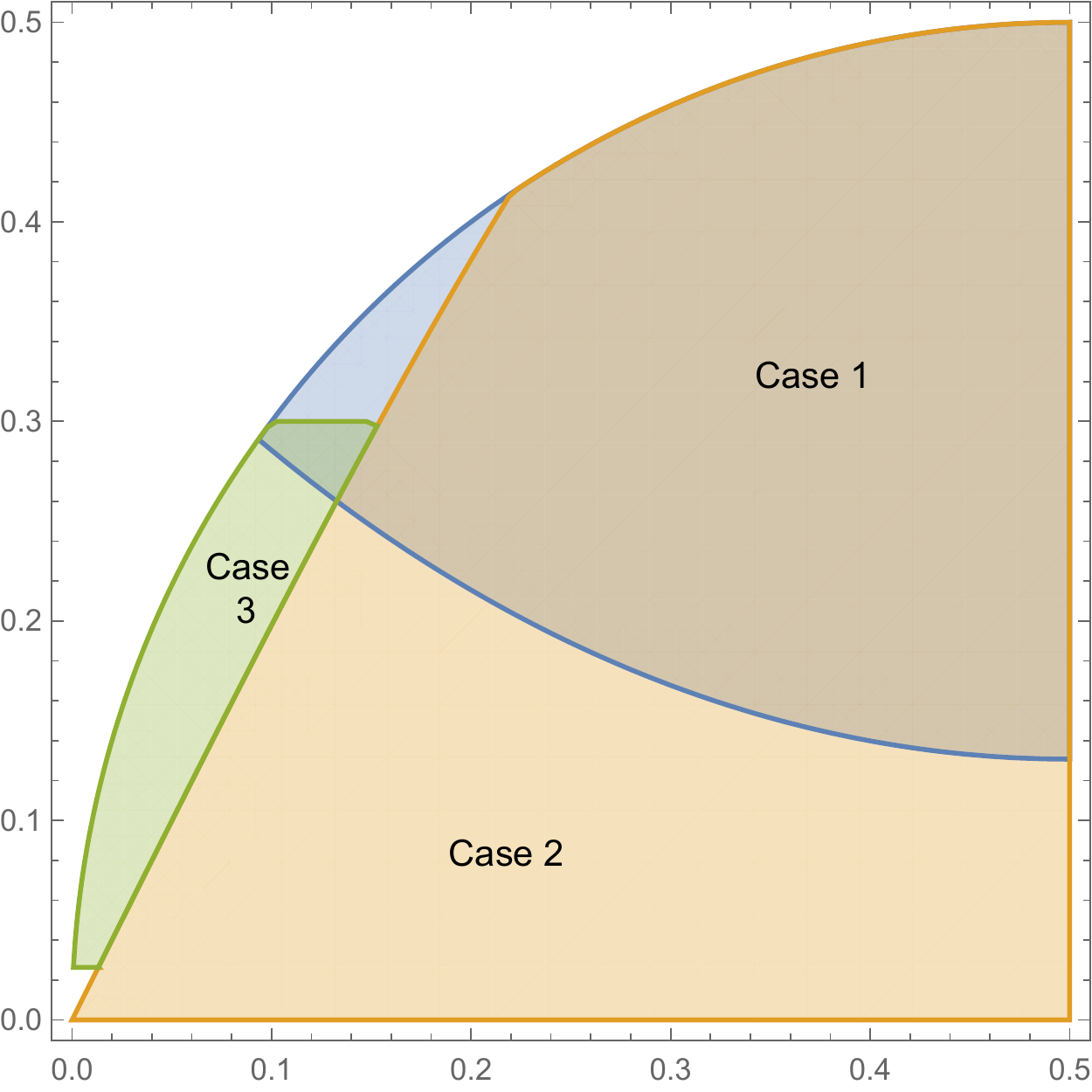}
    \caption{Cases for Lower bound for obtuse and right triangles }
    \label{Obtuse-LowerBound-v2}
\end{figure}

 For two of the propositions below we will use the following bound
by Freitas and Siudeja in \cite[Corollary 4.1]{Freitas-Siudeja-2010} 
\begin{equation}
\lambda_{1}\left(\triangle_{a,b}\right)\geq\pi^{2}\left(\frac{1}{d}+\frac{1}{h}\right)^{2},\label{eq:Bartek-Freitas-Eigen-Bound1}
\end{equation}
where $d$ is the diameter and $h$ is the height perpendicular to
its longest side. If $\left(a,b\right)\in\mathcal{T}_{obtuse}$ we
have that $d=1$ and $h=b$ so that 
\begin{equation}
\lambda_{1}\left(\triangle_{a,b}\right)\geq\pi^{2}\left(1+\frac{1}{b}\right)^{2}.\label{eq:Bartek-Freitas-Eigen-Bound2}
\end{equation}

\begin{prop}[Case 1]
\label{prop-Obtuse-Lower-Case1}If $\left(a,b\right)\in\mathcal{T}_{obtuse}$,
then 
\[
\frac{\pi^{2}}{24}< F\left(\bigtriangleup_{a,b}\right),
\]
for $$\frac{3}{2}-\frac{\sqrt{5}}{2}\sqrt{1+2a-2a^{2}}\leq b\leq\sqrt{a-a^{2}},$$
when $$0.0934\approx\frac{1}{6}\left(3-\sqrt{24\sqrt{15}-87}\right)\leq a\leq\frac{1}{2}.$$
\end{prop}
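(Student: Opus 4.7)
The plan is to combine two lower bounds already established in Section~\ref{prelimLB} to reduce the target inequality to a single polynomial condition in $(a,b)$. From Lemma~\ref{lem:Test-Function-Approach-1} we have
\[
T(\triangle_{a,b}) \;\geq\; \frac{b^{3}}{80(1 - a + a^{2} + b^{2})},
\]
and from the Freitas--Siudeja diameter/height bound \eqref{eq:Bartek-Freitas-Eigen-Bound2}, noting that an obtuse triangle in $\mathcal{T}_{\text{obtuse}}$ has diameter $d = 1$ and altitude on the longest side $h = b$,
\[
\lambda_{1}(\triangle_{a,b}) \;\geq\; \pi^{2}\left(1 + \frac{1}{b}\right)^{\!2} \;=\; \frac{\pi^{2}(b+1)^{2}}{b^{2}}.
\]
Multiplying these bounds and dividing by $|\triangle_{a,b}| = b/2$ collapses to
\[
F(\triangle_{a,b}) \;\geq\; \frac{\pi^{2}(b+1)^{2}}{40(1 - a + a^{2} + b^{2})}.
\]

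The target inequality $F(\triangle_{a,b}) > \pi^{2}/24$ is therefore implied by $3(b+1)^{2} > 5(1 - a + a^{2} + b^{2})$, which rearranges to the quadratic condition in $b$,
\[
Q_{a}(b) := 2b^{2} - 6b + (2 - 5a + 5a^{2}) < 0.
\]
The discriminant of $Q_{a}$ equals $20(1 + 2a - 2a^{2}) > 0$ on $[0,\tfrac12]$, so $Q_{a}(b) < 0$ precisely between its roots
\[
b_{\pm}(a) \;=\; \frac{3 \pm \sqrt{5}\sqrt{1 + 2a - 2a^{2}}}{2}.
\]
The hypothesis $b \geq \tfrac{3}{2} - \tfrac{\sqrt{5}}{2}\sqrt{1 + 2a - 2a^{2}}$ is exactly $b \geq b_{-}(a)$, while $b_{+}(a) \geq \tfrac{3}{2}$ dominates every admissible $b \leq \sqrt{a-a^{2}} \leq \tfrac{1}{2}$ forced by $(a-\tfrac12)^{2} + b^{2} \leq \tfrac14$. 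Thus the reduction produces a non-trivial $b$-window exactly matching the hypothesis.

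It remains to check that this $b$-window is non-empty for the stated $a$'s, i.e.\ that $b_{-}(a) \leq \sqrt{a - a^{2}}$. Squaring the equivalent inequality $2 - 3a + 3a^{2} \leq 6\sqrt{a - a^{2}}$ (both sides positive on $[0,\tfrac{1}{2}]$) and rearranging leads to $9a^{4} - 18a^{3} + 57a^{2} - 48a + 4 \leq 0$. The decisive observation, and the only mildly non-obvious step, is that this quartic equals $(3a^{2} - 3a + 8)^{2} - 60$, and therefore factors as
\[
\bigl(3a^{2} - 3a + 8 - 2\sqrt{15}\bigr)\bigl(3a^{2} - 3a + 8 + 2\sqrt{15}\bigr).
\]
The second factor is strictly positive, and the first is $\leq 0$ precisely when $a$ lies between the roots of $3a^{2} - 3a + (8 - 2\sqrt{15})$, whose smaller root is $a_{0} = \tfrac{1}{6}\bigl(3 - \sqrt{24\sqrt{15} - 87}\bigr) \approx 0.0934$, matching the stated threshold. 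Finally, strict inequality $F > \pi^{2}/24$ is inherited since the test-function inequality in Lemma~\ref{lem:Test-Function-Approach-1} is strict on every non-degenerate $\triangle_{a,b}$.

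The only real ``obstacle'' is spotting the closed-form depressed-quartic factorization that exposes the exact value of $a_{0}$; once that identity is recognized, the rest is bookkeeping. No symmetrization or sector comparison (Lemma~\ref{lem:Bartek-Lem1}) is needed in this region. The two remaining obtuse propositions will pick up the complementary regimes of $\mathcal{T}_{\text{obtuse}}$ where the crude Freitas--Siudeja bound on $\lambda_{1}$ is too weak, so there the proof must reach for the sector comparison and a monotonicity-based reduction to right triangles as in Lemmas~\ref{lem:Acute-High-1}--\ref{lem:Techinical-Lower-Mgeq3}.
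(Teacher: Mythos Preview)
Your proof is correct and follows exactly the same route as the paper: combine Lemma~\ref{lem:Test-Function-Approach-1} with the Freitas--Siudeja bound \eqref{eq:Bartek-Freitas-Eigen-Bound2} and reduce to the elementary inequality $\tfrac{3}{5}\tfrac{(1+b)^{2}}{1-a+a^{2}+b^{2}}\geq 1$. The paper simply asserts this holds on the stated region by ``a straightforward computation''; you have actually carried out that computation, including the nice depressed-quartic factorization $(3a^{2}-3a+8)^{2}-60$ that produces the exact threshold $a_{0}=\tfrac{1}{6}(3-\sqrt{24\sqrt{15}-87})$. One small wording issue: the test-function bound in Lemma~\ref{lem:Test-Function-Approach-1} is an equality for the equilateral triangle, not strict on every non-degenerate triangle, but since $\mathcal{T}_{\text{obtuse}}$ excludes the equilateral your strictness claim still holds there.
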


\begin{proof}
By Lemma \ref{lem:Test-Function-Approach-1} we have 
\[
T\left(\triangle_{a,b}\right)>\frac{b^{3}}{80\left[1-a+a^{2}+b^{2}\right]},
\]
where the inequality holds strictly since equality holds only for the equilateral triangle. Moreover (\ref{eq:Bartek-Freitas-Eigen-Bound2}) gives $\lambda_{1}\left(\triangle_{a,b}\right)\geq\pi^{2}\left(1+\frac{1}{b}\right)^{2}$,
so that 
\begin{align*}
F\left(\bigtriangleup_{a,b}\right) & \geq\frac{1}{b/2}\pi^{2}\left(1+\frac{1}{b}\right)^{2}\frac{b^{3}}{80\left[1-a+a^{2}+b^{2}\right]}\\
 & =\frac{\pi^{2}}{24}\frac{3}{5}\frac{\left(1+b\right)^{2}}{1-a+a^{2}+b^{2}}=:\frac{\pi^{2}}{24}f\left(a,b\right).
\end{align*}
A straightforward computation shows that $f\left(a,b\right)\geq1$
if 
\[
\frac{3}{2}-\frac{\sqrt{5}}{2}\sqrt{1+2a-2a^{2}}\leq b\leq\sqrt{a-a^{2}}
\]
which holds  for $$0.0934\approx\frac{1}{6}\left(3-\sqrt{24\sqrt{15}-87}\right)\leq a\leq\frac{1}{2}.$$
\end{proof}
\begin{prop}[Case 2]
\label{prop-Obtuse-Lower-Case2}If $\left(a,b\right)\in\mathcal{T}_{obtuse}$,
then 
\[
\frac{\pi^{2}}{24}\leq F\left(\bigtriangleup_{a,b}\right),
\]
for $0\leq b\leq\frac{2a\left(1-a\right)}{1-a+a^{2}}$.
\end{prop}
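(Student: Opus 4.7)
The plan is to apply the variational characterization \eqref{eq:TorRig-variational} with a test function better adapted to thin obtuse triangles than the equilateral-based one used in Lemma \ref{lem:Test-Function-Approach-1}. Let $h(x)$ denote the upper boundary of $\triangle_{a,b}$, so that $h(x)=bx/a$ on $[0,a]$ and $h(x)=b(1-x)/(1-a)$ on $[a,1]$, and take $v(x,y)=\tfrac{1}{2}\,y\bigl(h(x)-y\bigr)$. This lies in $H_0^1(\triangle_{a,b})$ since $h(0)=h(1)=0$ and $v$ vanishes whenever $y\in\{0,h(x)\}$.

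First I would compute $\int_{\triangle_{a,b}} v\,dx\,dy=\tfrac{1}{12}\int_0^1 h(x)^3\,dx=b^3/48$, using $\int_0^a(bx/a)^3dx=ab^3/4$ and $\int_a^1\bigl(b(1-x)/(1-a)\bigr)^3dx=(1-a)b^3/4$. A parallel computation using $|\nabla v|^2=\tfrac{1}{4}\bigl(y^2h'(x)^2+(h(x)-2y)^2\bigr)$, together with $h'(x)^2=b^2/a^2$ on $[0,a]$ and $h'(x)^2=b^2/(1-a)^2$ on $[a,1]$, gives $\int_{\triangle_{a,b}}|\nabla v|^2\,dx\,dy=\tfrac{1}{12}\int_0^1 h(x)^3\bigl(1+h'(x)^2\bigr)dx=\frac{b^3\bigl(a(1-a)+b^2\bigr)}{48\,a(1-a)}$. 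Substituting into \eqref{eq:TorRig-variational} yields
\[
T(\triangle_{a,b})\ge\frac{a(1-a)\,b^3}{48\bigl(a(1-a)+b^2\bigr)}.
\]

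Next I would combine this with the Freitas--Siudeja bound $\lambda_1(\triangle_{a,b})\ge\pi^2(1+1/b)^2$ from \eqref{eq:Bartek-Freitas-Eigen-Bound2} and $|\triangle_{a,b}|=b/2$ to obtain
\[
F(\triangle_{a,b})\ge\frac{\pi^2}{24}\cdot\frac{a(1-a)(1+b)^2}{a(1-a)+b^2}.
\]
The remaining step is the elementary algebraic check that $a(1-a)(1+b)^2\ge a(1-a)+b^2$ simplifies, after cancelling $a(1-a)$ from both sides and dividing through by $b>0$, to $2a(1-a)\ge b(1-a+a^2)$, which is exactly the hypothesis $b\le\frac{2a(1-a)}{1-a+a^2}$. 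The only real judgement call is selecting the test function: $v$ is chosen so that its quadratic profile in $y$ matches the strip-like shape of the torsion function of a very thin triangle, which is precisely why it outperforms Lemma \ref{lem:Test-Function-Approach-1} in the regime covered by Case 2. Once $v$ is chosen the integrals separate cleanly along the two pieces of $\partial\triangle_{a,b}$ and no substantial obstacle remains.
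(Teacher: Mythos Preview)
Your proof is correct and is essentially the same as the paper's. Your test function $v(x,y)=\tfrac{1}{2}y(h(x)-y)$ is, up to the irrelevant scalar factor $\tfrac{1}{2}$, exactly the paper's piecewise test function $\frac{b^2x^2}{4a^2}-\bigl(y-\frac{bx}{2a}\bigr)^2$ (and its mirror on $[a,1]$), since $\frac{h(x)^2}{4}-\bigl(y-\frac{h(x)}{2}\bigr)^2=y(h(x)-y)$; the resulting torsional rigidity bound, the combination with \eqref{eq:Bartek-Freitas-Eigen-Bound2}, and the final algebraic reduction to the hypothesis on $b$ all coincide with the paper's argument.
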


\begin{proof}
To bound the torsional rigidity we use the following test function
(similar to the one used in \cite[Section 5.1]{Vandenberg-Ferone-Nitsch-Trombetti-2019a}) in the variational
characterization of $T\left(\bigtriangleup_{a,b}\right)$ :
\[
v\left(x,y\right)=\begin{cases}
\frac{b^{2}x^{2}}{4a^{2}}-\left(y-\frac{bx}{2a}\right)^{2} & 0\leq x\leq a\\
\frac{b^{2}\left(1-x\right)^{2}}{4\left(1-a\right)^{2}}-\left(y-\frac{b\left(1-x\right)}{2\left(1-a\right)}\right)^{2} & a\leq x\leq1
\end{cases}.
\]
The triangle $\triangle_{a,b}$ is bounded by $0\leq y\leq\frac{b}{a}x,$
when $0\leq x\leq a$, by  $0\leq y\leq\frac{-b}{1-a}\left(x-1\right)$, 
when $a\leq x\leq1$, and by $y\geq0$, when $0\leq x\leq1$. It is then
clear that $v$ vanishes on the boundary. A straightforward computation
shows that 
\begin{align*}
T\left(\triangle_{a,b}\right) & \geq\frac{\left(\int_{\triangle_{a,b}}v\right)^{2}}{\int_{\triangle_{a,b}}\left|\nabla v\right|^{2}}=\frac{\frac{b^{6}}{576}}{\frac{b^{3}\left(a-a^{2}+b^{2}\right)}{12\left(1-a\right)a}}=\frac{\left(1-a\right)ab^{3}}{48\left(a-a^{2}+b^{2}\right)}.
\end{align*}

Combining this bound with $\left|\triangle_{a,b}\right|=\frac{b}{2}$
and (\ref{eq:Bartek-Freitas-Eigen-Bound2}) shows that $F\left(\triangle_{a,b}\right)\geq\frac{\pi^{2}}{24}f\left(a,b\right)$
with 
\begin{align*}
f\left(a,b\right) & =\frac{24}{\pi^{2}}\frac{1}{b/2}\pi^{2}\left(1+\frac{1}{b}\right)^{2}\left(\frac{\left(1-a\right)ab^{3}}{48\left(a-a^{2}+b^{2}\right)}\right)\\
 & =\frac{\left(1-a\right)a\left(1+b\right)^{2}}{\left(a-a^{2}+b^{2}\right)}.
\end{align*}
From this it is easy to see that $f\left(a,b\right)\geq1$ for 
\[
0\leq b\leq\frac{2a\left(1-a\right)}{1-a+a^{2}}.
\]
\end{proof}

Next, we prove the result for degenerating obtuse triangles that are close to right triangles. This will be the most difficult case in this section. The proof is similar to the proof of Proposition \ref{prop:Lower-Acute-b-high} in the acute case. We use sectors to give the appropriate lower bounds and then use the monotonicity results from Lemmas \ref{lem:Acute-High-1} and \ref{lem:Techinical-Lower-Mgeq3} to reduce to a lower bound for right triangles. 

\begin{prop}[Case 3]
\label{prop-Obtuse-Lower-Case3}If $\left(a,b\right)\in\mathcal{T}_{obtuse}$,
then 
\[
\frac{\pi^{2}}{24}< F\left(\bigtriangleup_{a,b}\right),
\]
for $\frac{2a\left(1-a\right)}{1-a+a^{2}}\leq b\leq\sqrt{a-a^{2}}$
and $0\leq a\leq 0.5, 0\leq b\leq 0.3$. 
\end{prop}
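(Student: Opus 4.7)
The proof plan follows the same template as Proposition \ref{prop:Lower-Acute-b-high} adapted to the obtuse regime. For $(a,b)\in\mathcal{T}_{obtuse}$ the smallest angle is $\beta$ at the vertex $(1,0)$, given by $\beta=\arctan(b/(1-a))$, with adjacent sides $1$ and $N=\sqrt{(1-a)^{2}+b^{2}}$ and $\sin\beta=b/N$. I would inscribe a circular sector $S(\beta,h_{\beta})$ at this vertex, where $h_{\beta}=N\cos(\beta/2)=b/(2\sin(\beta/2))$ is the altitude of the isosceles triangle with two sides of length $N$ and apex angle $\beta$. Because $N\leq 1$ and the base $2N\sin(\beta/2)$ of this isosceles triangle is bounded by the side $M$ opposite $\beta$ (a direct check from the law of cosines together with $M\leq N\leq 1$), the isosceles triangle sits inside $\triangle_{a,b}$, and the same sector expansion computation used in Lemma \ref{lem:TorRigbound-1} gives
\begin{align*}
T(\triangle_{a,b}) \geq \frac{h_{\beta}^{4}}{16}\Bigl(\tan\beta-\beta-\frac{124\zeta(5)\beta^{4}}{\pi^{5}}\Bigr).
\end{align*}
The hypothesis $\beta\leq\pi/4$ needed for case~(2) of that lemma is automatic here since $\beta\leq\arctan(0.6)<\pi/4$ on the stated region. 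Combining with the obtuse case of Lemma \ref{lem:Bartek-Lem1}, $\lambda_{1}(\triangle_{a,b})\geq(\beta/b)j_{\pi/\beta}^{2}$, together with the Bessel-zero bound $j_{\pi/\beta}\geq\pi/\beta+(k/2^{1/3})(\pi/\beta)^{1/3}$ with $k=2.338107$, and substituting $h_{\beta}=b/(2\sin(\beta/2))$ produces
\begin{align*}
F(\triangle_{a,b}) \geq \frac{\beta\,b^{2}}{128\sin^{4}(\beta/2)}\Bigl(\tan\beta-\beta-\frac{124\zeta(5)\beta^{4}}{\pi^{5}}\Bigr)\Bigl(\frac{\pi}{\beta}+\frac{k}{2^{1/3}}\bigl(\frac{\pi}{\beta}\bigr)^{1/3}\Bigr)^{2}.
\end{align*}

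The next step is a monotonicity reduction to the right-triangle boundary $b=\sqrt{a-a^{2}}$. The right-hand side above depends on $(a,b)$ only through $\beta$ and $b$, and for fixed $b$ it becomes a function of $\beta$ alone on the admissible interval $[\beta_{\min}(b),\beta_{\max}(b)]$, with $\beta_{\min}(b)$ attained on the right-triangle curve $a=a_{1}(b)=\tfrac{1}{2}(1-\sqrt{1-4b^{2}})$. Using Lemma \ref{lem:Acute-High-1}, which gives that $\gamma\mapsto\gamma(\tan\gamma-\gamma-\cdots)(\pi/\gamma+\cdots)^{2}$ is increasing on $(0,0.7)$, together with explicit Taylor expansions of the remaining trigonometric factors, the plan is to show that the infimum of this one-variable function over the admissible $\beta$-interval in Case~3 occurs at $\beta=\beta_{\min}(b)$, i.e., on the right-triangle curve. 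This step requires carefully combining the competing monotonicities of the prefactor $b^{2}/\sin^{4}(\beta/2)$ (decreasing in $\beta$) and of the Bessel-Taylor factor (increasing in $\beta$), and is the principal obstacle in the proof.

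On the right-triangle curve the Pythagorean identity $M^{2}+N^{2}=1$ with $M^{2}=a$, $N^{2}=1-a$ forces $\sin\beta=M=\sqrt{a}$ and $b=\sin\beta\cos\beta$, so $h_{\beta}=\cos(\beta/2)\cos\beta$ and the lower bound on $F$ becomes an explicit function of $\beta$ alone. The claim $F>\pi^{2}/24$ reduces, after the substitution $x=\beta$ and Taylor expansion of $\tan$, $\sin$, $\cos$, to a polynomial inequality on a small explicit interval, which I would settle via the translation $x\mapsto x+\varepsilon$ followed by Siudeja's coefficient-sign algorithm from \cite[Section 5]{Siudeja-IU-2010}, exactly as in Lemmas \ref{lem:Techinical-Lower-Mgeq3} and \ref{lem:technical-func-ineq-1}. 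The sharpness of the bound as $b\to 0$ — the leading-order expansion yields exactly $\pi^{2}/24$, so the polynomial margin left for Siudeja's routine is thin — means the truncation order and the shift $\varepsilon$ must be chosen with some care before negativity is certified.
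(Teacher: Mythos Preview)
Your template mirrors the paper's: sector bound on $T$ at the smallest-angle vertex, Lemma~\ref{lem:Bartek-Lem1} for $\lambda_{1}$, a monotonicity reduction in the angle variable, then a one-parameter inequality. Two substantive differences deserve comment.

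First, the paper inscribes the larger sector $S(\beta,N)$ rather than your $S(\beta,h_{\beta})$ with $h_{\beta}=N\cos(\beta/2)$: because $\gamma>\pi/2$ at the opposite vertex $(a,b)$, the full arc of radius $N$ from $(a,b)$ to $(1-N,0)$ stays inside $\triangle_{a,b}$, so no intermediate isosceles triangle is needed. Your torsion bound is weaker by the factor $\cos^{4}(\beta/2)\approx 1-\tfrac{\beta^{2}}{2}$, which is harmless in the thin regime but an unnecessary loss.

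Second, and this is the genuine gap: your planned reduction to $\beta_{\min}(b)$ cannot succeed as stated. With $h_{\beta}=b/(2\sin(\beta/2))$ your lower bound for $F$ is, for fixed $b$, proportional to $\sin^{-4}(\beta/2)$ times the Lemma~\ref{lem:Acute-High-1} function; the prefactor scales like $\beta^{-4}$ while the bracket scales like $\beta^{2}$, so the product is $\sim \pi^{2}b^{2}/(24\beta^{2})$ and is \emph{decreasing} in $\beta$. (For instance at $b=0.1$ the admissible range is $\beta\in[0.101,0.105]$ and your ratio to $\pi^{2}/24$ drops from about $1.24$ to about $1.13$ across it.) The infimum therefore lies at $\beta_{\max}(b)$, on the Case~2 boundary $b=2a(1-a)/(1-a+a^{2})$, not on the right-triangle curve; your final one-variable step would have to be carried out on that messier curve, and the clean reuse of Lemma~\ref{lem:Techinical-Lower-Mgeq3} is lost. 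The paper proceeds differently: it keeps $N^{4}$ as a separate factor, invokes Lemma~\ref{lem:Acute-High-1} only on the angle-dependent piece to pass from $\beta$ to $\beta_{b}$, and then, writing everything in terms of $x_{b}=(1-a_{b})/b$, factors the result as an elementary ``leftover factor'' (shown $\geq 1$ directly) times precisely the function $f(x_{b})$ of Lemma~\ref{lem:Techinical-Lower-Mgeq3}, so that the inequality $f(x_{b})\geq 1$ for $x_{b}\geq 3$ already established in the acute case finishes the job without any new polynomial computation.
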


\begin{proof}
We first give an estimate for the torsional rigidity. This estimate
will be similar to the acute case given in Proposition \ref{prop:Lower-Acute-b-high}.

Let $\left(a,b\right)\in\mathcal{T}_{obtuse}$ and in particular consider
\[
\left(a,b\right)\in R_{Case3}=\left\{ \left(a,b\right)\mid\frac{2a\left(1-a\right)}{1-a+a^{2}}\leq b\leq\sqrt{a-a^{2}},0\leq a\leq 0.5, 0\leq b\leq 0.3\right\} .
\]
 For obtuse triangles one can see that $S\left(\beta,N\right)\subset\triangle_{a,b}$
where $N=\sqrt{\left(a-1\right)^{2}+b^{2}}$ is the side of middle
length and $\beta$ is the smallest angle on the lower right. This
containment can be checked using elementary geometry with a circle
centered at $\left(\frac{1}{2},0\right)$ of radius $N$ and noting
that such a circle must be contained in $\triangle_{a,b}$ since the
angle $\gamma$ between sides of length $M,N$ has an angle greater
than $\frac{\pi}{2}$. This implies that 
\begin{align*}
T\left(\triangle_{a,b}\right) & \geq T\left(S\left(\beta,N\right)\right)\\
 & =\frac{N^{4}}{16}\left(\tan\beta-\beta-\frac{128\beta^{4}}{\pi^{5}}\sum_{n=1,3,5}\frac{1}{n^{2}\left(n+\frac{2\beta}{\pi}\right)^{2}\left(n-\frac{2\beta}{\pi}\right)}\right).
\end{align*}

Recall that $\beta=\beta\left(\triangle_{a,b}\right)=\tan^{-1}\left(\frac{a}{1-b}\right)$.
We first like to give a bound on the range of $\beta$. Consider $$R^{\prime}=\left\{ \left(a,b\right)\mid\frac{2a\left(1-a\right)}{1-a+a^{2}}\leq b\leq\sqrt{a-a^{2}},0\leq a\leq\frac{1}{2}\left(1-\sqrt{8\sqrt{2}-11}\right)\right\}, $$
where $a=\frac{1}{2}\left(1-\sqrt{8\sqrt{2}-11}\right)$ is the intersecting
point for the curves $b=\frac{2a\left(1-a\right)}{1-a+a^{2}}$ and
$b=\frac{1}{2}\left(1-\sqrt{8\sqrt{2}-11}\right)$. Note that $R_{Case3}\subset R^{\prime}$.
For any fixed $b$, one can see that $\beta\left(\triangle_{a,b}\right)$
is an increasing function of $a$. For $\left(a,b\right)\in R^{\prime}$,
one  has that the angle $\beta\left(\triangle_{a,b}\right)=\tan^{-1}\left(\frac{a}{1-b}\right)$
is maximized when $a=\frac{1}{2}\left(1-\sqrt{8\sqrt{2}-11}\right)$
and $b=\frac{2a\left(1-a\right)}{1-a+a^{2}}$ so that $\beta\in\left[0, 0.489\right]\subset\left[0,\frac{\pi}{4}\right]$
in this region. Now recall that for $n\in\mathbb{N}$ we have $$\min_{0\leq\gamma\leq\frac{\pi}{4}}\left(n+\frac{2\gamma}{\pi}\right)^{2}\left(n-\frac{2\gamma}{\pi}\right)=n^{3},$$
so that 
\[
T\left(\triangle_{a,b}\right)\geq\frac{N^{4}}{16}\left(\tan\beta-\beta-\frac{128\beta^{4}}{\pi^{5}}\sum_{n=1,3,5}\frac{1}{n^{5}}\right).
\]
Since $$\sum_{n=1,3,5}\frac{1}{n^{5}}=\frac{31\zeta\left(5\right)}{32},$$ we can rewrite 
\begin{align*}
T\left(\triangle_{a,b}\right) & \geq\frac{N^{4}}{16}\left(\tan\beta-\beta-\frac{124\zeta\left(5\right)\beta^{4}}{\pi^{5}}\right).
\end{align*}

Using Lemma \ref{lem:Bartek-Lem1} for the obtuse case we have that
\[
\lambda_{1}\left(\triangle_{a,b}\right)\geq\frac{\beta}{b}j_{\pi/\beta}^{2},
\]
and using the same estimates as in the proof of Proposition \ref{prop:Lower-Acute-b-low}
we have that 
\[
\lambda_{1}\left(\triangle_{a,b}\right)>\frac{\beta}{b}\left(\frac{\pi}{\beta}+\frac{c_{1}}{2^{1/3}}\left(\frac{\pi}{\beta}\right)^{1/3}\right)^{2},
\]
with $c_{1}=2.338107$.  This gives  that 
\begin{align*}
F\left(\bigtriangleup_{a,b}\right) & >\frac{\pi^{2}}{24}\cdot\frac{24}{\pi^{2}}\frac{1}{b/2}\frac{\beta}{b}\left(\frac{\pi}{\beta}+\frac{c_{1}}{2^{1/3}}\frac{\pi^{1/3}}{\beta^{1/3}}\right)^{2}\frac{N^{4}}{16}\left(\tan\beta-\beta-\frac{124\zeta\left(5\right)\beta^{4}}{\pi^{5}}\right).
\end{align*}

Recall that for any fixed $b$ the angle $\beta\left(\triangle_{a,b}\right)$
is an increasing function of $a$. This  means for any fixed $b$,
the angle $\beta\left(\triangle_{a,b}\right)$ is minimized by $a_{b}=\frac{1}{2}-\sqrt{\frac{1}{4}-b^{2}}$,
which falls on the curve $\left(a-\frac{1}{2}\right)^{2}+b^{2}=\frac{1}{4}$
that represents the right triangles. By Lemma \ref{lem:Acute-High-1},
the map
\[
\beta\mapsto\left(\tan\beta-\beta-\frac{124\zeta\left(5\right)\beta^{4}}{\pi^{5}}\right)\beta\left(\frac{\pi}{\beta}+\frac{c_{1}}{2^{1/3}}\left(\frac{\pi}{\beta}\right)^{1/3}\right)^{2}
\]
is increasing for $\beta\in\left(0,0.7\right)$, so that since $\beta\left(\triangle_{a,b}\right)\geq\beta\left(\triangle_{a_{b},b}\right)=:\beta_{b}$
we have
\begin{align*}
 & F\left(\bigtriangleup_{a,b}\right)\\
 & >\frac{\pi^{2}}{24}\cdot\frac{24}{\pi^{2}}\frac{1}{b/2}\frac{\beta_{b}}{b}\left(\frac{\pi}{\beta_{b}}+\frac{c_{1}}{2^{1/3}}\frac{\pi^{1/3}}{\beta_{b}^{1/3}}\right)^{2}\frac{N^{4}}{16}\left(\tan\beta_{b}-\beta_{b}-\frac{124\zeta\left(5\right)\beta_{b}^{4}}{\pi^{5}}\right)\\
 & =\frac{\pi^{2}}{24}\cdot\frac{3}{b^{2}\beta_{b}}\left(\left(1-a_{b}\right)^{2}+b^{2}\right)^{2}\left(1+\frac{c_{1}}{2^{1/3}}\frac{\beta_{b}^{2/3}}{\pi^{2/3}}\right)^{2}\left(\frac{1}{\frac{1-a_{b}}{b}}-\tan^{-1}\left(\frac{1}{\frac{1-a_{b}}{b}}\right)-\frac{124\zeta(5)\left(\tan^{-1}\left(\frac{1}{\frac{1-a_{b}}{b}}\right)\right)^{4}}{\pi^{4}}\right)\\
 & =\frac{\pi^{2}}{24}\cdot\frac{4\left(\left(1-a_{b}\right)+\frac{b^{2}}{\left(1-a_{b}\right)}\right)^{2}}{\left(1+\frac{x_{b}}{\sqrt{x_{b}^{2}+1}}\right)^{2}}\\
 & \times\frac{3}{4}\frac{x_{b}^{2}}{\beta_{b}}\left(1+\frac{x_{b}}{\sqrt{x_{b}^{2}+1}}\right)^{2}\left(1+\frac{c_{1}}{2^{1/3}}\frac{\beta_{b}^{2/3}}{\pi^{2/3}}\right)^{2}\left(\frac{1}{x_{b}}-\tan^{-1}\left(\frac{1}{x_{b}}\right)-\frac{124\zeta(5)\left(\tan^{-1}\left(\frac{1}{x_{b}}\right)\right)^{4}}{\pi^{4}}\right),
\end{align*}
where $x_{b}=\frac{1-a_{b}}{b}$. Using Lemma \ref{lem:Techinical-Lower-Mgeq3}
shows that the factor 
\[
f\left(x_{b}\right)=\frac{3}{4}\frac{x_{b}^{2}}{\beta_{b}}\left(1+\frac{x_{b}}{\sqrt{x_{b}^{2}+1}}\right)^{2}\left(1+\frac{c}{2^{1/3}}\frac{\beta_{b}^{2/3}}{\pi^{2/3}}\right)^{2}\left(\frac{1}{x_{b}}-\tan^{-1}\left(\frac{1}{x_{b}}\right)-\frac{124\zeta(5)\left(\tan^{-1}\left(\frac{1}{x_{b}}\right)\right)^{4}}{\pi^{4}}\right)
\]
satisfies $f\left(x_{b}\right)\geq1$ for $x_{b}\geq3$. Noting that
$$x_{b}=\frac{1-a_{b}}{b}=\frac{1-\left(\frac{1}{2}-\sqrt{\frac{1}{4}-b^{2}}\right)}{b}\geq3,$$
whenever $0\leq b\leq 0.3$ gives  the desired bound for the factor
$f\left(x_{b}\right)$. 

Finally, we'd like to show the leftover factor term above is also greater
than $1$. To see this we simplify 
\begin{align*}
\frac{4\left(\left(1-a_{b}\right)+\frac{b^{2}}{\left(1-a_{b}\right)}\right)^{2}}{\left(1+\frac{x_{b}}{\sqrt{x_{b}^{2}+1}}\right)^{2}} & =\frac{16\left(1+\sqrt{1-4b^{2}}\right)}{\left(\sqrt{2}+\sqrt{2}\sqrt{1-4b^{2}}+2\sqrt{1+\sqrt{1-4b^{2}}}\right)^{2}}
\end{align*}
and an elementary computation shows that this term is greater than
$1$ for $0\leq b\leq 0.5$. Putting these bounds together shows the
desired result for $\left(a,b\right)\in R_{Case3}$. 
\end{proof}



\section{Proof of Theorem \ref{thm:MainResult}: Upper Bound for triangles and tangential quadrilaterals}\label{upperbounds}

The upper bound for triangles and tangential quadrilaterals will follow by the following proposition. Rectangles that are not squares are not tangential quadrilaterals, hence it will be treated separately in Section \ref{rectanglecase}. 
\begin{prop}
\label{thm:Upper-Bound-all}For all triangles $\bigtriangleup$ we have 
\begin{equation}\label{upper-for-triangles}
F\left(\bigtriangleup\right)\leq\frac{2\pi^{2}}{27}<\frac{\pi^{2}}{12}.
\end{equation}
Moreover, for any tangential quadrilateral $Q$ we have  
\begin{equation}\label{upper-for-rectangles} 
F\left(Q\right)<\frac{\pi^{2}}{12}.
\end{equation}
\end{prop}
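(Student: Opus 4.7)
The strategy is to prove both bounds by combining sharp extremal estimates for the torsional rigidity and the first Dirichlet eigenvalue of polygons, due to Siudeja, Makai and Solynin--Zalgaller. Since $F$ is scale invariant we may normalize as convenient.

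For the triangle bound \eqref{upper-for-triangles}, my plan is to begin with the classical P\'olya--Szeg\H{o} Steiner symmetrization for triangles, sharpened by Makai and later extended by Solynin--Zalgaller to general $n$-gons, which asserts that among triangles of prescribed area the equilateral triangle maximizes the torsional rigidity:
\[
T(\triangle)\;\leq\;T(E_{|\triangle|})\;=\;\frac{|\triangle|^{2}}{15\sqrt{3}}.
\]
This will be combined with a Siudeja-type upper bound on $\lambda_{1}(\triangle)$ obtained by comparing $\triangle$ with a right or isosceles model triangle of the same area (so that equality-type configurations are avoided except in the equilateral case). Multiplying the two estimates and exploiting scale invariance, the arithmetic collapses to $F(\triangle)\leq 2\pi^{2}/27$, after which the strict inequality $2\pi^{2}/27<\pi^{2}/12$ finishes the triangle case.

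For the tangential quadrilateral bound \eqref{upper-for-rectangles}, I would exploit the defining identity $|Q|=\rho\,s$ for a tangential polygon, where $\rho$ is the inradius and $s$ the semi-perimeter. A P\'olya--Makai-type upper estimate $T(Q)\leq\rho^{2}|Q|/3$ (which is sharp only in the infinite-strip limit) combined with a controlled upper bound on $\lambda_{1}(Q)$ obtained by comparing $Q$ with a suitable enclosing rectangle yields the desired inequality $F(Q)<\pi^{2}/12$; the strict inequality is essential and must come from the fact that $Q$ is bounded whereas the limiting strip is not.

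The main obstacle is the tangential quadrilateral case, since proving $F(Q)<\pi^{2}/12$ on this subclass amounts essentially to Conjecture~\ref{Conjecture-convex} restricted to tangential quadrilaterals, and the infinite strip --- a limit of degenerate tangential shapes --- already attains equality $F=\pi^{2}/12$. Extracting the strict inequality therefore cannot proceed from direct extremal comparison alone but requires either a compactness/continuity argument excluding the strip limit on bounded tangential $Q$, or a quantitative sharpening of one of the two building-block bounds above. The triangle case is comparatively routine once the explicit equilateral extremum $F(E)=\pi^{2}/15$ is identified, since this sits well below the claimed $2\pi^{2}/27$.
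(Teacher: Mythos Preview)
Your proposal has a genuine gap: the factorization through \emph{area} cannot work. If you bound $T(\triangle)\le c\,|\triangle|^{2}$ (equilateral extremal) and then try to pair this with any upper bound on $\lambda_{1}(\triangle)$, you are implicitly bounding $\lambda_{1}(\triangle)\,|\triangle|$ uniformly over all triangles. But this quantity is unbounded: for a triangle of base $1$ and height $b\to 0$ one has $|\triangle|=b/2$ while $\lambda_{1}(\triangle)\sim \pi^{2}/b^{2}$, so $\lambda_{1}|\triangle|\sim \pi^{2}/(2b)\to\infty$. No ``Siudeja-type upper bound on $\lambda_1$'' can save this, and the same obstruction applies verbatim to your tangential-quadrilateral scheme (think of a thin rhombus). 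Your worry at the end about needing a compactness argument is a symptom of this mismatch.

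The missing idea is to factor through the \emph{perimeter} rather than the area:
\[
F(D)=\frac{\lambda_{1}(D)\,|D|^{2}}{P(D)^{2}}\cdot\frac{T(D)\,P(D)^{2}}{|D|^{3}}.
\]
Makai's inequality gives $T(D)P(D)^{2}/|D|^{3}<2/3$ for every bounded planar convex domain, with strict inequality (the extremal is only attained in the degenerate limit), which is precisely what delivers the strictness you were looking for. For the first factor one uses the scale-invariant eigenvalue bounds $\lambda_{1}(\triangle)|\triangle|^{2}/P(\triangle)^{2}\le \pi^{2}/9$ for triangles (Siudeja; equality at the equilateral) and $\lambda_{1}(Q)|Q|^{2}/P(Q)^{2}\le \pi^{2}/8$ for tangential quadrilaterals (Solynin--Zalgaller). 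Multiplying gives $F(\triangle)\le (\pi^{2}/9)(2/3)=2\pi^{2}/27$ and $F(Q)<(\pi^{2}/8)(2/3)=\pi^{2}/12$, with no further work needed. You named exactly the right three references; the only correction is to combine them via the perimeter, which is what makes both factors bounded and renders the argument a two-line computation.
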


\begin{proof}
Let $P(D)$ denote the perimeter of a convex domain $D$. By a result of Makai \cite{Makai-1962} (see also a more general version given in \cite{Fragala-Gazzola-Lamboley-2013}), we have  that for all planar convex
domains
\begin{equation}
\frac{T\left(D\right)P\left(D\right)^{2}}{\left|D\right|^{3}}<\frac{2}{3},\label{eq:Makai-Tor-Per-Area}
\end{equation}
where this upper bound is achieved by thinning isosceles triangles.
Moreover, a result of Siudeja \cite{Siudeja-2007} (see also \cite{Solynin-Zalgaller-2010} for a different proof) gives  that
\begin{equation}
\frac{\lambda_{1}\left(\triangle\right)\left|\triangle\right|^{2}}{P\left(\triangle\right)^{2}}\leq\frac{\pi^{2}}{9},\label{eq:Bartek-Eigen-Per-Area}
\end{equation}
where the upper bound is sharp for the equilateral triangle. Using
$(\ref{eq:Makai-Tor-Per-Area})$ and $(\ref{eq:Bartek-Eigen-Per-Area})$
we have
\begin{align*}
\frac{\lambda_{1}\left(\triangle\right)T\left(\triangle\right)}{\left|\triangle\right|} & =\frac{\lambda_{1}\left(\triangle\right)\left|\triangle\right|^{2}}{P\left(\triangle\right)^{2}}\frac{T\left(\triangle\right)P\left(\triangle\right)^{2}}{\left|\triangle\right|^{3}}\leq \frac{\pi^{2}}{9}\frac{2}{3}=\frac{2\pi^{2}}{27}<\frac{\pi^{2}}{12},
\end{align*}
as needed. 

Moreover, for any tangential quadrilateral $Q$, we have
\begin{equation}
\frac{\lambda_{1}\left(Q\right)\left|Q\right|^{2}}{P\left(Q\right)^{2}}\leq\frac{\pi^{2}}{8},\label{eq:Bartek-Eigen-Per-Area-1}
\end{equation}
by a result of Solynin and Zalgaller in \cite[Corollary 3]{Solynin-Zalgaller-2010}. Combining \eqref{eq:Bartek-Eigen-Per-Area-1} with Makai's inequality
it follows that 
\begin{align*}
\frac{\lambda_{1}\left(Q\right)T\left(Q\right)}{\left|Q\right|} & =\frac{\lambda_{1}\left(Q\right)\left|Q\right|^{2}}{P\left(Q\right)^{2}}\frac{T\left(Q\right)P\left(Q\right)^{2}}{\left|Q\right|^{3}}<\frac{\pi^{2}}{8}\frac{2}{3}=\frac{\pi^{2}}{12}.
\end{align*}

\end{proof}

We also give the following upper bound which gives the sharpness of the lower bound of Theorem \ref{thm:MainResult} for thinning triangles and tangential polygons. Recall that $R_D$ denotes the inradius of the domain $D$.

\begin{prop}\label{Prop:upper-thinning}
Suppose $D$ is a convex planar domain satisfying $\frac{1}{2}P\left(D\right)R_D=\left|D\right|$
(such as triangles and tangential polygons).  Then 
\[
F(D)\leq\frac{\pi^{2}}{24}\left(1+2\sqrt{\pi}\frac{\left|D\right|^{1/2}}{P\left(D\right)}\right)^{2}.
\]
Moreover,  if $D_{n}$ is a sequence of such domains so that $\frac{\left|D_{n}\right|^{1/2}}{P(D_{n})}\to0$, 
then 
\[
\limsup_{n\to\infty}F\left(D_{n}\right)\leq\frac{\pi^{2}}{24}.
\]
\end{prop}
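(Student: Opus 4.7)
The plan is to split $F(D) = \lambda_1(D)\cdot T(D)/|D|$ into two factors and bound each under the tangential hypothesis $\tfrac{1}{2}P(D)R_D = |D|$. For the torsional ratio, I would invoke Makai's inequality $T(D)P(D)^2 \leq \tfrac{2}{3}|D|^3$, already used in the proof of Proposition~\ref{thm:Upper-Bound-all}. The tangential identity yields $P(D)^2 = 4|D|^2/R_D^2$, so substituting gives the clean bound
\[
\frac{T(D)}{|D|} \leq \frac{R_D^2}{6}.
\]

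For the eigenvalue factor, the target inequality is
\[
\lambda_1(D)\,R_D^2 \leq \frac{\pi^2}{4}\!\left(1 + \frac{2\sqrt{\pi}\,|D|^{1/2}}{P(D)}\right)^{\!2},
\]
to be viewed as a quantitative companion to the Hersch--Protter lower bound $\lambda_1 R_D^2 \geq \pi^2/4$. I would establish it by inserting into the Rayleigh quotient a trial function of the form $v(x) = \sin\bigl(\pi d(x,\partial D)/(2R_D)\bigr)\,g(x)$, where $d(\cdot,\partial D)$ is the distance-to-boundary function and $g$ is a longitudinal modulation vanishing at the ``ends'' of $D$ (e.g., a sinusoid along the longest chord). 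The co-area formula reduces all integrals to one-dimensional expressions in $t\in(0,R_D)$ weighted by the perimeter $P(D_t)$ of the inner parallel body $D_t = \{d > t\}$; the tangential hypothesis combined with the affine behavior of $P(D_t)$ for tangential polygons forces the explicit identity $P(D_t) = P(D)(1 - t/R_D)$, making the Rayleigh quotient computable in closed form. The factor $g$ is tuned so that the induced correction in the quotient appears precisely as $2\sqrt{\pi}\,|D|^{1/2}/P(D)$.

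Multiplying these two bounds,
\[
F(D) \leq \frac{R_D^2}{6}\cdot\frac{\pi^2}{4R_D^2}\!\left(1 + \frac{2\sqrt{\pi}|D|^{1/2}}{P(D)}\right)^{\!2} = \frac{\pi^2}{24}\!\left(1 + \frac{2\sqrt{\pi}|D|^{1/2}}{P(D)}\right)^{\!2},
\]
which is the stated inequality. The asymptotic claim is immediate: if $|D_n|^{1/2}/P(D_n) \to 0$, the correction factor tends to $1$, giving $\limsup_n F(D_n) \leq \pi^2/24$.

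The main obstacle is the second step. A purely transverse ansatz $v = \sin(\pi d/(2R_D))$ (with $g \equiv 1$) yields, after the co-area reduction, a Rayleigh quotient bounded only by a universal constant times $1/R_D^2$, which fails to degrade to $\pi^2/(4R_D^2)$ in the thin limit and therefore cannot produce the desired correction factor. Introducing an intrinsic longitudinal modulation (say, through the projection onto the longest chord) is where the work lies, and tracking the constants carefully enough to recover exactly the coefficient $2\sqrt{\pi}$ in the correction is the delicate part.
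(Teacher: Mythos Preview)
Your decomposition and the paper's are the same: bound $T(D)/|D|$ via Makai and bound $\lambda_1(D)$ separately, then multiply. Your rewriting of Makai as $T(D)/|D|\le R_D^2/6$ using the tangential identity $P(D)=2|D|/R_D$ is correct and algebraically equivalent to what the paper does (the paper keeps Makai in the form $T(D)P(D)^2/|D|^3<2/3$ and simplifies at the end).

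The difference is entirely in your second step. The eigenvalue inequality you set as the target,
\[
\lambda_1(D)\,R_D^2 \le \frac{\pi^2}{4}\Bigl(1+\tfrac{2\sqrt{\pi}\,|D|^{1/2}}{P(D)}\Bigr)^{2},
\]
is, after substituting $R_D=2|D|/P(D)$, exactly
\[
\lambda_1(D)\le \frac{\pi^2}{16}\Bigl(\tfrac{P(D)}{|D|}+2\sqrt{\tfrac{\pi}{|D|}}\Bigr)^{2},
\]
which is precisely Corollary~1 of Ftouhi~\cite{Ftouhi-2021}. The paper simply cites this result and is done in two lines. You instead propose to re-derive it from a trial function $v=\sin(\pi d/(2R_D))\,g$ and the co-area formula. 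Your observation that $P(D_t)=P(D)(1-t/R_D)$ for tangential polygons is correct and is indeed what makes the co-area computation tractable, but the sketch of how to choose the longitudinal modulation $g$ so that the Rayleigh quotient produces exactly the coefficient $2\sqrt{\pi}$ is left vague, and you yourself flag this as the obstacle. So the proposal is not wrong, but the hard part you identify is already available in the literature; once you cite Ftouhi's bound, your argument collapses to the paper's.
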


\begin{proof}
By \cite[Corollary 1]{Ftouhi-2021}, we have the following bound for convex domains satisfying
$\frac{1}{2}P\left(D\right)R_D=\left|D\right|$: 
\[
\lambda_{1}(D)\leq\frac{\pi^{2}}{16}\left(\frac{P\left(D\right)}{\left|D\right|}+2\sqrt{\frac{\pi}{\left|D\right|}}\right)^{2}.
\]
Using Makai's inequality $(\ref{eq:Makai-Tor-Per-Area})$ it follows that 
\begin{align*}
\frac{\lambda_{1}\left(D\right)T\left(D\right)}{\left|D\right|} & \leq\frac{\pi^{2}}{16}\frac{2}{3}\frac{\left|D\right|^{2}}{P\left(D\right)^{2}}\left(\frac{P\left(D\right)}{\left|D\right|}+2\sqrt{\frac{\pi}{\left|D\right|}}\right)^{2}\\
 & =\frac{\pi^{2}}{24}\left(1+2\sqrt{\pi}\frac{\left|D\right|^{1/2}}{P\left(D\right)}\right)^{2},
\end{align*}
as required. 
\end{proof}

\section{Proof of Theorem \ref{thm:MainResult} for all triangles}\label{CollectAll} 
\begin{proof}[Proof of Theorem \ref{thm:MainResult}]
We collect all the results to prove our main theorem on triangles. Recall that
for the acute case/right triangle case we use the following representation,
\[
\mathcal{T}_{acute}^{\prime}=\left\{ \left(a,b\right):a^{2}+b^{2}\geq1,0\leq a\leq\frac{1}{2}\right\} 
\]
while for the obtuse case we use the following representation
\[
\mathcal{T}_{obtuse}=\left\{ \left(a,b\right)\in\left[0,\frac{1}{2}\right]\times\left[0,\frac{1}{2}\right]\mid\left(a-\frac{1}{2}\right)^{2}+b^{2}\leq\frac{1}{4}\right\} .
\]

\textbf{Part (a):} Acute Lower bound: Let $\left(a,b\right)\in\mathcal{T}_{acute}^{\prime}$.
The lower bound follows by Proposition \ref{prop:Lower-Acute-b-low}
for $\frac{\sqrt{3}}{2}\leq b\leq4$ and by Proposition \ref{prop:Lower-Acute-b-high}
for $b\geq3$. 

\textbf{Part (b):} Obtuse Lower bound: Let $\left(a,b\right)\in\mathcal{T}_{obtuse}$.
The lower bound follows by Proposition \ref{prop-Obtuse-Lower-Case1}
for $$\frac{3}{2}-\frac{\sqrt{5}}{2}\sqrt{1+2a-2a^{2}}\leq b\leq\sqrt{a-a^{2}},$$
when $$0.0934\approx\frac{1}{6}\left(3-\sqrt{24\sqrt{15}-87}\right)\leq a\leq\frac{1}{2}.$$
The lower bound also follows by Proposition \ref{prop-Obtuse-Lower-Case2}
for $0\leq b\leq\frac{2a\left(1-a\right)}{1-a+a^{2}}$ and by Proposition
\ref{prop-Obtuse-Lower-Case3} for $$\frac{2a\left(1-a\right)}{1-a+a^{2}}\leq b\leq\sqrt{a-a^{2}}, \quad 
\text{and}\quad 0\leq a\leq 0.5, 0\leq b\leq 0.3.$$ All these regions combined make
up $\mathcal{T}_{obtuse}$ (see Figure \ref{Obtuse-LowerBound-v2}). 

\textbf{Part (c):} Upper Bound: This follows by Proposition \ref{thm:Upper-Bound-all}.

\textbf{Part (d):} Sharpness of the lower bound for triangles: 

Let $\triangle_n$ be a sequence of triangles collapsing down to an interval. Then by a result of \cite[Proposition 5.2]{Berg-Buttazzo-Pratelli-2021} or \cite[Theorem 4.4]{Briani-Buttazzo-Prinari-2022} (see also \cite{Borisov-Freitas-2013,Borisov-Freitas-2010}) shows that $\lim_{n\to \infty}F\left(\triangle_{n}\right)=\frac{\pi^{2}}{24}$. The upper bound in Proposition \ref{Prop:upper-thinning} also shows this. 
\end{proof}




\section{Upper and lower bounds for rectangles}\label{rectanglecase}

For rectangles we can obtain the conjectured  bounds which will follow from the following stronger monotonicity  property of $F$.

\begin{thm}
\label{thm:Main-Rec} Let $R_{a,b}=\left(-a,a\right)\times\left(-b,b\right)$
be a rectangle domain. For all $a,b>0$, we have 
\begin{equation}
\frac{\pi^2}{24}<F\left(R_{1,1}\right)\leq F\left(R_{a,b}\right)<\frac{\pi^{2}}{12}\label{eq:thm:Main-Rec-inq}
\end{equation}
where $\lim_{a\to\infty}F\left(R_{a,1}\right)=\frac{\pi^{2}}{12}$.
Moreover, the function $F\left(R_{a,1}\right)$ is increasing for
$a\geq1$. 
\end{thm}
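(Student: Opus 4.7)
The plan is to exploit the classical Fourier expansion of the torsion function on a rectangle to obtain a closed-form series for $F(R_{a,b})$, and then reduce the monotonicity claim to term-by-term positivity of a derivative.  By scaling invariance of $F$ we may set $b=1$, and it suffices to study
\[
f(a) := F(R_{a,1}), \qquad a \geq 1.
\]
Expanding the constant $1$ in the basis $\{\sin(m\pi(x+a)/(2a))\sin(n\pi(y+1)/2)\}_{m,n\geq 1}$ of Dirichlet eigenfunctions on $R_{a,1}$, only odd $(m,n)$ contribute, and a direct computation gives
\begin{equation}
T(R_{a,1}) = \frac{1024\, a^{3}}{\pi^{6}} \sum_{m,n\text{ odd}} \frac{1}{m^{2}n^{2}(m^{2}+n^{2}a^{2})}. \label{eq:plan-T-series}
\end{equation}
Combined with $\lambda_{1}(R_{a,1}) = \tfrac{\pi^{2}}{4}(1 + a^{-2})$ and $|R_{a,1}| = 4a$, this yields
\begin{equation}
f(a) = \frac{64(a^{2}+1)}{\pi^{4}} \sum_{m,n\text{ odd}} \frac{1}{m^{2}n^{2}(m^{2}+n^{2}a^{2})}. \label{eq:plan-F-series}
\end{equation}

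For the monotonicity, set $s = a^{2}$ and $g(s) := f(\sqrt{s})$. Differentiating \eqref{eq:plan-F-series} term by term (justified since the series converges uniformly on compact subsets of $s \geq 1$) gives
\[
g'(s) = \frac{64}{\pi^{4}} \sum_{m,n\text{ odd}} \frac{m^{2}-n^{2}}{m^{2}n^{2}(m^{2}+n^{2}s)^{2}}.
\]
Grouping the pair $(m,n)$ with its transpose $(n,m)$ rewrites this as
\[
g'(s) = \frac{64}{\pi^{4}} \sum_{\substack{m>n \\ m,n\text{ odd}}} \frac{m^{2}-n^{2}}{m^{2}n^{2}} \left[\frac{1}{(m^{2}+n^{2}s)^{2}} - \frac{1}{(n^{2}+m^{2}s)^{2}}\right].
\]
The key algebraic observation is that for $s \geq 1$ and $m>n$,
\[
(n^{2}+m^{2}s) - (m^{2}+n^{2}s) = (m^{2}-n^{2})(s-1) \geq 0,
\]
so the bracket is nonnegative and $g'(s) \geq 0$. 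Hence $f(a) = g(a^{2})$ is nondecreasing on $[1,\infty)$, which implies $F(R_{1,1}) \leq F(R_{a,b})$ for all $a,b>0$ by the symmetry $F(R_{a,b}) = F(R_{\max(a,b)/\min(a,b),\,1})$.

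The remaining two bounds in \eqref{eq:thm:Main-Rec-inq} follow easily. For the upper bound, the limit $a \to \infty$ in \eqref{eq:plan-F-series} is evaluated by dominated convergence on the series $\sum \frac{1}{m^{2}n^{2}(m^{2}/a^{2}+n^{2})}$, giving
\[
\lim_{a\to\infty} f(a) = \frac{64}{\pi^{4}} \left(\sum_{m\text{ odd}}\frac{1}{m^{2}}\right)\left(\sum_{n\text{ odd}}\frac{1}{n^{4}}\right) = \frac{64}{\pi^{4}}\cdot\frac{\pi^{2}}{8}\cdot\frac{\pi^{4}}{96} = \frac{\pi^{2}}{12},
\]
so $f(a) < \pi^{2}/12$ for all finite $a$ by monotonicity. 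For the lower bound $F(R_{1,1})>\pi^{2}/24$, retaining only the leading term $(m,n)=(1,1)$ in \eqref{eq:plan-F-series} at $a=1$ gives $F(R_{1,1}) \geq 64/\pi^{4}$, and the elementary inequality $64\cdot 24 = 1536 > \pi^{6} \approx 961.4$ finishes the proof.

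The main obstacle I anticipate is purely bookkeeping: carefully verifying the series \eqref{eq:plan-T-series} (paying attention to the $(-a,a)$ versus $(0,2a)$ conventions and to the odd-index selection arising from integrating $1$ against $\sin$) and justifying termwise differentiation. The monotonicity argument itself, built around the symmetric pairing $(m,n) \leftrightarrow (n,m)$ and the single factorization $(n^{2}+m^{2}s)-(m^{2}+n^{2}s) = (m^{2}-n^{2})(s-1)$, is short once that framework is in place.
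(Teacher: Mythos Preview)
Your proof is correct and follows essentially the same route as the paper: both arguments exploit the explicit double Fourier series for $F(R_{a,1})$, pair the $(m,n)$ term with its transpose $(n,m)$, and reduce monotonicity to the elementary factorization $(n^{2}+m^{2}s)-(m^{2}+n^{2}s)=(m^{2}-n^{2})(s-1)$; the lower bound via the $(1,1)$ term and the limit via dominated convergence are likewise identical. The only cosmetic difference is that you substitute $s=a^{2}$ and differentiate the full series before pairing, whereas the paper pairs first and then differentiates each combined term $g_{\alpha,\beta}(a)$ directly in $a$.
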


We first  recall the well-known infinite series formula for $T(R_{a,b})$.  For the interval $I_{a}=(-a,a)$
the eigenfunctions are given by  $$
\varphi_{n}\left(x\right)=\frac{1}{\sqrt{a}}\sin\left(\frac{n\pi}{2a}\left(x+a\right)\right)
$$ with eigenvalues 
$
\lambda_{n}=\left(\frac{n\pi}{2a}\right)^{2}.
$
Let $p_{(-a,a)}(x,y,t)$ denotes the transition densities for killed Brownian
motion $B_{t}$ on the interval $I_{a}=(-a,a)$, that is, the Dirichlet heat kernel for $\frac{1}{2}\Delta$ in the interval $I_a$.   By the eigenfunction expansion we have 
\begin{align*}
p_{(-a,a)}(x,y,t) & =\sum_{n=1}^{\infty}e^{-\lambda_{n}t/2}\varphi_{n}(x)\varphi_{n}(y)\\
 & =\sum_{n=1}^{\infty}\frac{1}{a}\exp\left(-\left(\frac{n\pi}{2a}\right)^{2}\frac{t}{2}\right)\sin\left(\frac{n\pi}{2a}\left(x+a\right)\right)\sin\left(\frac{n\pi}{2a}\left(y+a\right)\right). 
\end{align*}
From this, the product structure of the rectangles $R_{a,b}=\left(-a,a\right)\times\left(-b,b\right)$ and independence of the components of 
the Brownian motion, it follows that 
the torsion function for $R_{a,b}$ is given by 
\begin{align*}
u_{a,b}(x,y) & =\frac{1}{2}\mathbb{E}_{(x,y)}\left[\tau_{(-a,a)\times(-b,b)}\right] \\
 & =\frac{4^{3}a^{2}}{\pi^{4}}\sum_{n=0}^{\infty}\frac{\sin\left(\frac{(2n+1)\pi}{2a}\left(x+a\right)\right)}{\left(2n+1\right)^{3}}\sum_{m=0}^{\infty}\frac{\sin\left(\frac{(2m+1)\pi}{2b}\left(y+b\right)\right)}{\left(2m+1\right)}\frac{1}{\left(1+\frac{a^{2}\left(2m+1\right)^{2}}{b^{2}\left(2n+1\right)^{2}}\right)}.
\end{align*}
Integrating this the torsional rigidity is given by 
\begin{equation}\label{ExpTorRec}
T(R_{a,b})=\frac{4^{5}a^{3}b^{3}}{\pi^{6}}\sum_{n=0}^{\infty}\sum_{m=0}^{\infty}\frac{1}{\left(b^{2}\left(2n+1\right)^{4}\left(2m+1\right)^{2}+a^{2}\left(2m+1\right)^{4}\left(2n+1\right)^{2}\right)}.
\end{equation}
This formula can be found in various places, see for example  \cite[p. 108]{Polya-Szego-1951}. Hence the P\'olya functional for $R_{a, b}$ is given by 
\begin{align}\label{ExpPolRec}
F\left(R_{a,b}\right) 
 & =\frac{4^{3}}{\pi^{4}}\sum_{n=0}^{\infty}\sum_{m=0}^{\infty}\frac{\left(a^{2}+b^{2}\right)}{\left(b^{2}\left(2n+1\right)^{4}\left(2m+1\right)^{2}+a^{2}\left(2m+1\right)^{4}\left(2n+1\right)^{2}\right)}.
\end{align}

\begin{proof}[Proof of Theorem \ref{thm:Main-Rec}] 
To prove the monotonicity property observe that since $F$ is scale invariant and 
$
R_{a,b}=aR_{1,\frac{b}{a}}, 
$
it suffices to consider 
\[
F\left(R_{x,1}\right)=\frac{4^{3}}{\pi^{4}}\sum_{n=0}^{\infty}\sum_{m=0}^{\infty}\frac{\left(1+x^{2}\right)}{\left(\left(2n+1\right)^{4}\left(2m+1\right)^{2}+x^{2}\left(2m+1\right)^{4}\left(2n+1\right)^{2}\right)},
\]
for $x\geq1$. Setting $f(x)=\frac{\pi^4}{4^3}F\left(R_{x,1}\right)$, our goal is to  show that $f$ is increasing. First rewrite
\begin{align*}
 & f\left(x\right)\\
 & =\left(\sum_{n=0}^{\infty}\sum_{m:n=m}+\sum_{n=0}^{\infty}\sum_{m:n>m}+\sum_{n=0}^{\infty}\sum_{m:n<m}\right)\frac{1+x^{2}}{\left(\left(2n+1\right)^{4}\left(2m+1\right)^{2}+x^{2}\left(2m+1\right)^{4}\left(2n+1\right)^{2}\right)}\\
 & =\sum_{n=0}^{\infty}\frac{1+x^{2}}{\left(\left(2n+1\right)^{6}+x^{2}\left(2n+1\right)^{6}\right)}+\sum_{n=0}^{\infty}\sum_{m:n>m}\left[\frac{1+x^{2}}{\left(\left(2n+1\right)^{4}\left(2m+1\right)^{2}+x^{2}\left(2m+1\right)^{4}\left(2n+1\right)^{2}\right)}\right]\\
 & +\sum_{n=0}^{\infty}\sum_{m:n<m}\left[\frac{1+x^{2}}{\left(\left(2n+1\right)^{4}\left(2m+1\right)^{2}+x^{2}\left(2m+1\right)^{4}\left(2n+1\right)^{2}\right)}\right]\\
 & =\sum_{n=0}^{\infty}\frac{1}{\left(2n+1\right)^{6}}+\sum_{n=0}^{\infty}\sum_{m:n>m}\left[\frac{1+x^{2}}{\left(\left(2n+1\right)^{4}\left(2m+1\right)^{2}+x^{2}\left(2m+1\right)^{4}\left(2n+1\right)^{2}\right)}\right]\\
 & +\sum_{n=0}^{\infty}\sum_{m:n>m}\left[\frac{1+x^{2}}{\left(\left(2m+1\right)^{4}\left(2n+1\right)^{2}+x^{2}\left(2n+1\right)^{4}\left(2m+1\right)^{2}\right)}\right]\\
 & =\sum_{n=0}^{\infty}\frac{1}{\left(2n+1\right)^{6}}+\sum_{n=0}^{\infty}\sum_{m:n>m}\left[g_{\alpha,\beta}\left(x\right)\right],
\end{align*}
where $\alpha=\alpha_{n,m}=\left(2n+1\right)^{4}\left(2m+1\right)^{2}$
, $\beta=\beta_{n,m}=\left(2m+1\right)^{4}\left(2n+1\right)^{2}$ and
\[
g_{\alpha,\beta}\left(x\right)=\frac{1+x^{2}}{\left(\alpha+x^{2}\beta\right)}+\frac{1+x^{2}}{\left(\beta+x^{2}\alpha\right)}.
\]
Thus to show that $f(x)$ is increasing for $x\geq 1$, it suffices to show that each $g_{\alpha,\beta}$
is increasing for $x\geq 1$. To this end, we compute the derivative of $g_{\alpha,\beta}$ to obtain that 
\[
g_{\alpha,\beta}^{\prime}\left(x\right)=\frac{2\left(\alpha-\beta\right)^{2}\left(\alpha+\beta\right)x\left(x^{4}-1\right)}{\left(\beta+\alpha x^{2}\right)^{2}\left(\alpha+\beta x^{2}\right)^{2}}
\]
Clearly this quantity is nonnegative for $x\geq 1$.  Hence $g_{\alpha,\beta}$ is increasing. Thus, $F\left(R_{1,1}\right)\leq F\left(R_{a,b}\right)$. Moreover, taking $n=0,m=0$ in \eqref{ExpPolRec} it follows that $F\left(R_{a,b}\right)\geq \frac{64}{\pi^4} \geq \frac{\pi^2}{24}$, which gives the desired lower bound.

Finally note that for $x\geq1$, we have
\[
\frac{\left(1+x^{2}\right)}{\left(\left(2n+1\right)^{4}\left(2m+1\right)^{2}+x^{2}\left(2m+1\right)^{4}\left(2n+1\right)^{2}\right)}\leq\frac{2}{\left(2m+1\right)^{4}\left(2n+1\right)^{2}}
\]
and $\sum_{n=0}^{\infty}\sum_{m=0}^{\infty}\frac{2}{\left(2m+1\right)^{4}\left(2n+1\right)^{2}}<\infty$. 
Thus by the Dominated Convergence Theorem,
\begin{align*}
\lim_{x\to\infty}F\left(R_{x,1}\right) & =\frac{4^{3}}{\pi^{4}}\sum_{n=0}^{\infty}\sum_{m=0}^{\infty}\lim_{x\to\infty}\frac{\left(1+x^{2}\right)}{\left(\left(2n+1\right)^{4}\left(2m+1\right)^{2}+x^{2}\left(2m+1\right)^{4}\left(2n+1\right)^{2}\right)}\\
 & =\frac{4^{3}}{\pi^{4}}\sum_{n=0}^{\infty}\frac{1}{\left(2n+1\right)^{2}}\sum_{m=0}^{\infty}\frac{1}{\left(2m+1\right)^{4}}\\
 & =\frac{4^{3}}{\pi^{4}}\left(\frac{\pi^{2}}{8}\right)\left(\frac{\pi^{4}}{96}\right)=\frac{\pi^{2}}{12}, 
\end{align*}
which competes the proof of Theorem \ref{thm:Main-Rec}
\end{proof}

We end this section with the following remarks. 
\begin{rem}
Bounds for the torsional rigidity of a rectangle are known. For example, a lower bound for rectangles was known to  P\'olya-Szego \cite[p. 99, Eq. 4]{Polya-Szego-1951},  while an upper bound is given in \cite[pp. 46]{Fleeman-Simanek-2019},
\[
\frac{a^{3}b^{3}}{\left(a^{2}+b^{2}\right)}\leq T(R_{a,b})\leq\frac{4a^{3}b^{3}}{3\left(a^{2}+b^{2}\right)}.
\]
Combining these with the exact value for the principal eigenvalue of a rectangle gives the following bounds for the P\'olya functional
\[
\frac{\pi^{2}}{16}\leq F(R_{a,b})\leq\frac{\pi^{2}}{12},
\]
which would immediately prove Conjecture \ref{Conjecture-convex}   for rectangles. Our Theorem gives more information on the behavior of the function $F(R_{x,1})$ than just the inequality. It is also interesting to note that  by taking (as before) $n=0$ and $m=0$ in the series \eqref{ExpTorRec} we obtain 
\begin{equation}\label{lowerSer}
\frac{4^5a^{3}b^{3}}{\pi^6\left(a^{2}+b^{2}\right)}\leq T(R_{a,b}).  
\end{equation}
 Clearly better bounds can be obtained by using more terms in the series. The bound \eqref{lowerSer} was already noticed by P\'olya-Szego, see  \cite[p. 108]{Polya-Szego-1951}.
\end{rem}

\begin{remark}[A related functional for rectangles] Theorem \ref{thm:Main-Rec} is related to a conjecture in  \cite[Remark 5.5]{Banuelos-Mariano-Wang-2020} for the 
functional 
\begin{equation}\label{Max}
G\left(D\right)=\lambda_{1}\left(D\right)\left\Vert u_{D}\right\Vert _{\infty},
\end{equation}
where $u_{D}\left(x\right)$
is the torsion function in \eqref{TorionFunc}.   The conjecture in \cite[Remark 5.5]{Banuelos-Mariano-Wang-2020} states that over all rectangles
$R_{a,1}$, the functional $G\left(R_{a,1}\right)$ is maximized by the square
$S$. In  fact, the conjecture in \cite{Banuelos-Mariano-Wang-2020} is stated for all dimension and expectations of $p$ powers of the exit time of Brownian motion for $p\geq 1$.  The conjecture remains open although it is not difficult to verify it when $p=1$  for rectangles that are sufficiently long.  More precisely, with our notation $R_{a,b}=\left(-a,a\right)\times\left(-b,b\right)$, the conjecture in 
\cite[eq. (5.22)]{Banuelos-Mariano-Wang-2020} for the functional \eqref{Max} is equivalent to proving that $G(R_{a,1})\leq G(R_{1,1})$ for $a\geq 1$.  This is the simplest case of the conjecture with $d=2$ and $p=1$. Since for a rectangle $\Vert u_{R_{a,b}}\Vert _{\infty}=u_{R_{a,b}}(0, 0)= \frac{1}{2}\mathbb{E}_{(0,0)}\left[\tau_{R_{a,b}}\right]$, the conjecture follows for $a>2.38$  by estimating this quantity with the corresponding quantity for the infinite strip $(-\infty, \infty)\times (-1, 1)$. 
To see this, we can use the fact that $\mathbb{E}_{\left(0,0\right)}\left[\tau_{R_{\frac{\sqrt{2}}{2},\frac{\sqrt{2}}{2}}}\right]\approx 0.294685$
(see \cite[Equation (25)]{Markowsky-2011}) and $\lambda_{1}\left(R_{\frac{\sqrt{2}}{2},\frac{\sqrt{2}}{2}}\right)=\pi^{2}$
so that $G\left(R_{1,1}\right)=G\left(R_{\frac{\sqrt{2}}{2},\frac{\sqrt{2}}{2}}\right)\geq\frac{1}{2}\pi^{2}\left(0.294\right)\geq1.45$.
By domain monotonicity, $\mathbb{E}_{\left(0,0\right)}\left[\tau_{R_{a,1}}\right]\leq\mathbb{E}_{\left(0,0\right)}\left[\tau_{R_{\infty,1}}\right]=1$. Thus, 
 
\[
G\left(R_{a,1}\right)=\lambda_{1}\left(R_{a,1}\right)\frac{1}{2}\mathbb{E}_{\left(0,0\right)}\left[\tau_{R_{a,1}}\right]\leq\frac{\pi^{2}}{8}\left(1+\frac{1}{a^{2}}\right).
\]
On the other hand since 
$$\frac{\pi^{2}}{8}\left(1+\frac{1}{a^{2}}\right)a^{2}\leq1.45, \quad \text{for}\quad 
a\geq\sqrt{\frac{5\pi^2}{58-5\pi^{2}}},$$ we have $$G\left(R_{a,1}\right)\leq G\left(R_{1,1}\right), \quad \text{for}\quad a\geq2.38.$$ 

The functional $G(D)$ has been extensively studied in the literature.  For more discussion on this functional, we refer to \cite{Banuelos-Mariano-Wang-2020} and the many references contained therein.  
\end{remark}


\section{\label{sec:Polynomial-Inequality}Algorithm for proving polynomial
inequalities}

We use a simple idea for proving polynomial inequalities introduced by Siudeja in \cite[Section 5]{Siudeja-IU-2010}. We refer to Section 5 of \cite{Siudeja-IU-2010} for a full description of the algorithm and
a proof for the validity of the algorithm.  The algorithm can be used for proving
polynomial inequalities of the form $Q\left(x,y\right)\leq0$ over
a rectangle $\left(0,a\right)\times\left(0,b\right)$. 

In our paper, we will only need to prove polynomial inequalities of
the form 
\[
P\left(x\right):=a_{0}+a_{1}x+\cdots+a_{n}x^{n}\leq0
\]
for  $x$ in an interval of the form $\left(0,a\right)$ for some $a>0$.  The idea is to reduce terms with positive
coefficients to lower powers using 
\begin{align*}
a_{i}x^{i} & \leq a_{i}ax^{i-1}\text{ when }a_{i}>0,\\
a_{i}x^{i} & \leq a_{i}a^{-1}x^{i+1}\text{ when }a_{i}<0.
\end{align*}
Eventually this should reduce to a polynomial with only negative coefficients.
If this doesn't work, we subdivide the interval $\left(0,a\right)$
into two subintervals and repeat the same process on each of the two
subintervals. We  keep subdividing until the result is proven.

The computations to prove the polynomial inequalities from Lemmas \ref{lem:technical-func-ineq-1}, \ref{lem:Acute-High-1} and \ref{lem:Techinical-Lower-Mgeq3} can be done by hand but are long and very tedious. We use 
Siudeja's algorithm in Mathematica$\copyright$ to prove the polynomial inequalities from these Lemmas. For convenience, we include the Mathematica code from \cite[Section 5]{Siudeja-IU-2010} and \cite[Section 5.2]{Siudeja-HotSpots-2015} that is also used in this paper.

\vskip.5cm

\hrule
\vskip.1cm
\noindent\textbf{Algorithm 1.} Algorithm for proving polynomial inequality $P\left(x,y\right)\leq0$ over
a rectangle $\left(0,dx \right)\times\left(0,dy \right)$
\vskip.1cm
\hrule

\vskip.3cm

\noindent CumFun[f$\_$, l$\_$] $:=$ \textbf{Rest}[\textbf{FoldList}[f, 0, l]]; \\
PolyNeg[P$\_$, \{x$\_$, y$\_$\}, \{dx$\_$, dy$\_$\}] $:=$ \\
((\textbf{Fold}[CumFun[\textbf{Min}[\#1,0]/dy+\#2\&, \textbf{Map}[\textbf{Max}[\#1,0]\&,\#1]dx+\#2]\&,0,\\
\textbf{Reverse}[\textbf{CoefficientList}[P,\{x, y\}]]]//\textbf{Max}) $<=0$);

\vskip.3cm
\hrule

\subsection{Polynomial inequality in Lemma \ref{lem:technical-func-ineq-1} }\label{Sec:Lemma1}

Consider the polynomial $P_2(x)$ defined in \eqref{polyineq-1}. We can show that $P_2(x)\leq 0 $  for $x\in\left(0,.285\right)$ is true by running the algorithm 
PolyNeg[P2[x], \{x, y\}, \{.285, 1\}].

\subsection{Polynomial inequality in Lemma \ref{lem:Acute-High-1} }\label{Sec:Lemma2}

Consider the polynomial $-P_{1}^{\prime}(x)$ defined from  the derivative of $P_1$ given in  \eqref{polyineq-2}. To show that $-P_{1}^{\prime}(x)\leq0$
we use the  algorithm on $\left(0,\frac{0.888}{2}\right)$ and $\left(\frac{0.888}{2}, 0.888\right)$.

In particular, we use the algorithm PolyNeg[$-P_{1}^{\prime}$[x] , \{x, y\}, \{0.888/2, 1\}] to show the desired inequality is true on  $\left(0,\frac{0.888}{2}\right)$. To show that $-P_{1}^{\prime}(x)\leq0$  on $\left(\frac{0.888}{2}, 0.888\right)$ we define the polynomial 
\[
P_{2}\left(x\right)=-P_{1}^{\prime}\left(x+\frac{0.888}{2}\right)
\]
and run the algorithm PolyNeg[P2[x], \{x, 
  y\}, \{0.888/2, 1\}]. 

\subsection{Polynomial inequality in Lemma \ref{lem:Techinical-Lower-Mgeq3} }\label{Sec:Lemma3}

Consider the polynomial $Q(x)$ defined in \eqref{polyineq-3}. We can show that $Q(x)\leq 0 $  for $x\in\left(0, .686\right)$ is true by running the algorithm PolyNeg[Q[x], \{x, y\}, \{0.686, 1\}].

\begin{acknowledgement}
 Part of this research was conducted during an extended visit by P. Mariano to Purdue University during the fall semester of the 2023-2024 academic year. He would like to thank the department of mathematics at Purdue for the hospitality which allowed for many fruitful conversations with several colleagues there and visitors to the department. 
\end{acknowledgement}

\newpage
\bibliographystyle{plain}	

\bibliography{MainRef}

\end{document}